\pgfplotsset{compat=1.10}
\def\R {\mathbb{R}}
\def\N {\mathbb{N}}
\def\C {\mathcal{C}}
\def\eps{\varepsilon}
\def\utss{u.t.s.s.}
\def\aa{A}
\renewcommand{\div}{\mathrm{div}}
\DeclareMathOperator{\tr}{tr}
\newtheorem{proposition}{Proposition}[section]
\newtheorem{theorem}[proposition]{Theorem}
\newtheorem*{theorem*}{Theorem}
\newtheorem{lemma}[proposition]{Lemma}
\theoremstyle{definition}
\newtheorem{remark}[proposition]{Remark}
\numberwithin{equation}{section}
\title[Predator-prey (II): uniform estimates]{Predator-prey models with competition, Part II: \\ uniform regularity estimates}
\author{Henri Berestycki}
\email{hb@ehess.fr}
\address{\'{E}cole des Hautes \'{E}tudes en Sciences Sociales, PSL Research University Paris, Centre d'analyse et de math\'{e}matique sociales (CAMS), CNRS, 54 bouvelard Raspail, 75006, Paris}
\author{Alessandro Zilio}
\email{azilio@math.univ-paris-diderot.fr}
\address{Universit\'{e} Paris Diderot - Paris 7, Laboratoire J.-L.\ Lions (CNRS UMR 7598), Paris, France, 8 place Aur\'elie Nemours, 75205, Paris CEDEX 13}
\subjclass[2010]{Primary: 35B45; secondary: 35B25, 35B36, 35B65, 92A17}
\keywords{uniform estimates, phase separation, segregation, asymptotic analysis, competition.}
\begin{document}

\begin{abstract}
We study a system of elliptic equations with strong competition and an arbitrary large number of components. The system is related to a model of predators and prey, with a single and where several predators compete with each other. In this paper we derive regularity estimates of the solutions that are independent of the number of components (i.e., groups of predators) and the strength of competition between the components. 
\end{abstract}

\maketitle

\section{Introduction}\label{sec unif bounds}

\nocite{BZ_ecology}

%\todo[inline]{biblio, motivation, cite BZ4, no more than Lip}

We study the regularity of positive solutions $\mathbf{v} = (w_1, \dots, w_N, u)$ of the system
\begin{equation}\label{eqn model k}
	\begin{cases}
		- d_i \Delta w_i  = \left(- \omega_i + k_i u - \beta \sum_{j \neq i} a_{ij} w_j\right) w_i &\text{in $\Omega$}\\
		- D \Delta u = \left(\lambda - \mu u - \sum_{i=1}^N k_i w_i \right)u &\text{in $\Omega$}\\
		\partial_\nu w_i = \partial_\nu u = 0 &\text{on $\partial \Omega$}.
	\end{cases}
\end{equation}
We are chiefly interested in estimates that are independent both on the competition term $\beta$ and on the number of densities $N$. 
For this reason, we will work under the following uniform assumption. We assume that there exists $\delta \in (0,1)$ (fixed throughout the paper) such that 
\begin{equation}\label{unifass}
\begin{gathered}
	\delta \leq \lambda, \, \mu, \, d_i, \,  \omega_i, \,  k_i, \,   a_{ij} \leq \frac{1}{\delta}\\
	\lambda k_i - \mu \omega_i > \delta
\end{gathered}
\end{equation}
for any choice of the parameters in \eqref{eqn model k}. The assumption $\lambda k_i - \mu \omega_i > \delta$, although not necessary, is justified by the fact that if there exists $i \in \N$ such that $\lambda k_i - \mu \omega_i \leq 0$, it can be shown (\cite[Lemma 2.1]{BerestyckiZilio_PI}) that the component $w_i$ is necessarily equal to $0$. Some of the inequalities we are about to prove can be derived more easily in the case when $N$ is fixed. Here we derive these inequalities in the more general case of an arbitrary number of densities. This feature renders the derivation of the estimates considerably more delicate. This aspect has not been considered hitherto for this type of systems. 

The main results of this paper are contained in the following statements. We begin by studying the regularity of the solutions, uniformly in $\beta$ and $N$.

\begin{theorem}\label{prp asymptotic k}
Let $\Omega \subset \R^n$ be a smooth domain. Let $\beta \geq 0$ and $N \in \N$. We consider any set of positive parameters $D$, $d_i$, $\omega_i$, $k_i$, $a_{ij} = a_{ji}$ for $1 \leq i,j \leq k$ that satisfy the uniform assumption \eqref{unifass}. We consider a non negative solution $\mathbf{v} = (w_1, \dots, w_N, u) = (\mathbf{w},u)\in C^{2,\alpha}(\Omega)$ of system \eqref{eqn model k}. Then  all components of $\mathbf{v}$ are uniformly bounded in $L^\infty(\Omega)$ with respect to $\beta > 0$ and $N \in \N$, and there exists a constant $C = C(\delta, \Omega) >0$ (in particular $C$ is independent of $\beta$ and $N$) such that
\[
  0 \leq u \leq \frac{\lambda}{\mu}, \qquad \text{and} \qquad 0 \leq \sum_{i=1}^N w_i \leq C.
\]
Moreover, for any $\alpha \in (0,1)$, there exists $C_\alpha = C(\alpha, \delta, \Omega)$ (independent of $\beta$ and $N$) such that
\[
	\| u \|_{\C^{2,\alpha}(\overline \Omega)} \leq C_\alpha
\]
and
\[
  \max_{i \in \{1, \dots, N\}} \| w_i \|_{C^{0,\alpha}(\overline{\Omega})} + \max_{ \delta \leq \pi_i \leq 1/	\delta} \left\| \sum_{1=1}^N \pi_i w_i \right\|_{C^{0,\alpha}(\overline{\Omega})} \leq C_\alpha \left\| \sum_{1=1}^N w_i \right\|_{L^{\infty}(\overline{\Omega})}.
\]
for any positive reals $\pi_1, \dots, \pi_N$ with $\delta \leq \pi_i \leq 1/\delta$ $\forall i$. 
\end{theorem}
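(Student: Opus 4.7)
I would tackle the estimates in three blocks: first the $L^\infty$ bounds on $u$ and on $\sum w_i$, then the Schauder bound on $u$, and finally the uniform Hölder estimates on the $w_i$ and on $\sum \pi_i w_i$. The bound $0 \le u \le \lambda/\mu$ is immediate from the maximum principle applied to the $u$-equation: at any maximum point $x_0$ (interior, or boundary via Hopf and $\partial_\nu u = 0$) one has $\Delta u(x_0) \le 0$, so $(\lambda - \mu u - \sum_i k_i w_i)\, u \ge 0$ at $x_0$, and $w_i \ge 0$ forces $u(x_0) \le \lambda/\mu$.

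For $\sum w_i$ the right linear combination is
\[
  \phi := u + \frac{1}{D} \sum_{i=1}^N d_i w_i,
\]
the weights $d_i/D$ being dictated by the requirement that the cross-term $u\sum k_i w_i$, which appears with opposite signs in the equations for $u$ and for the $w_i$, cancel when we compute $-D\Delta\phi$. A direct calculation gives
\[
  -D\Delta \phi = \lambda u - \mu u^2 - \sum_i \omega_i w_i - \beta \sum_{i\ne j} a_{ij} w_i w_j.
\]
Discarding the non-positive interaction and using $\omega_i \ge \delta$, $d_i \le 1/\delta$, $D \ge \delta$, one rewrites this as $-D\Delta \phi + \delta^3 \phi \le (\lambda + \delta^3)\, u \le C(\delta)$, and the maximum principle, together with Hopf applied to the Neumann condition $\partial_\nu \phi = 0$, gives $\phi \le C$, whence $\sum w_i \le C$ uniformly in $\beta$ and $N$. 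With $\sum k_i w_i$ now bounded in $L^\infty$, the right-hand side of the $u$-equation is uniformly in $L^\infty$, so $W^{2,p}$ estimates and Morrey embedding yield $u \in C^{1,\alpha}$; the upgrade to $C^{2,\alpha}$ is then a Schauder bootstrap, conditional on a uniform $C^{0,\alpha}$ control of $\sum k_i w_i$, which will come from the final step.

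The uniform Hölder estimates on the individual $w_i$ and on the weighted sums $\sum \pi_i w_i$ are the technical heart of the theorem and, I expect, the main obstacle. The natural strategy is a contradiction/blow-up argument: assume the desired inequality fails along a sequence $(\mathbf v^{(n)}, \beta_n, N_n)$, rescale around points where the Hölder seminorm concentrates, and classify the possible limits via a Liouville-type theorem for segregated profiles. The existing uniform-Hölder results for strongly competing systems (Noris--Tavares--Terracini--Verzini, Conti--Terracini--Verzini, and the tools likely already developed in Part I) are written for a fixed number of components, so the delicate new point is independence of $N$. I would achieve this by grouping: for a fixed $w_i$, collapse the remaining competitors into the aggregate $\sum_{j\ne i} a_{ij} w_j$, whose $L^\infty$ norm is controlled by the previous step; the blow-up then reduces to a two-phase segregation problem, to which classical Liouville statements apply. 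A parallel grouping argument, relying once more on the sign of the competitive term, yields the estimate on $\sum \pi_i w_i$ uniformly over weights $\pi_i \in [\delta, 1/\delta]$, and feeds back into the Schauder bootstrap to close the $C^{2,\alpha}$ bound on $u$.
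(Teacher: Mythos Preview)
Your $L^\infty$ argument (maximum principle on $u$, then on the weighted sum $Du+\sum d_i w_i$) and the $W^{2,p}$/Schauder bootstrap for $u$ are exactly what the paper does. The blow-up scheme and the two-phase reduction --- fix one component and collapse the others into a single aggregate, then invoke a Liouville theorem for segregated pairs --- is also the paper's approach for the individual H\"older bounds; this is what the paper calls \textbf{Case W}. (A minor point: the aggregate the paper actually uses is $\sum_{j\ne i} d_j w_j$ rather than $\sum_{j\ne i} a_{ij} w_j$, because the former, being a straight sum of the equations, also satisfies a clean \emph{lower} bound on $-\Delta$ of the difference $d_i w_i - \sum_{j\ne i} d_j w_j$; you need that third inequality for the two-phase Liouville step.)

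The genuine gap is in your treatment of the weighted sum. Because $N$ is unbounded, it can happen along the contradicting sequence that
\[
  \Bigl\|\textstyle\sum_i \pi_i w_{i,n}\Bigr\|_{C^{0,\alpha}} \to \infty
  \qquad\text{while}\qquad
  \max_i \|w_{i,n}\|_{C^{0,\alpha}} = o\Bigl(\bigl\|\textstyle\sum_i \pi_i w_{i,n}\bigr\|_{C^{0,\alpha}}\Bigr),
\]
i.e.\ no single component carries the oscillation; it is produced by the superposition of many individually small oscillations. In this regime there is no ``one vs.\ the rest'' splitting available, so your ``parallel grouping argument'' has nothing to group, and the two-phase Liouville theory you cite does not apply. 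The paper isolates this as a separate \textbf{Case H} and handles it by a different device: summing all the equations yields a two-sided differential inequality for the rescaled sum $H_n=\sum_i d_i W_{i,n}$,
\[
  -\varepsilon_n H_n - M_n \delta^{-3}\Bigl(H_n^2-\textstyle\sum_i d_i^2 W_{i,n}^2\Bigr)
  \ \le\ -\Delta H_n\ \le\
  \varepsilon_n H_n - M_n \delta^{3}\Bigl(H_n^2-\textstyle\sum_i d_i^2 W_{i,n}^2\Bigr),
\]
and one shows (via energy-type estimates on $\partial B_R$) that $H_n(0)$ is bounded and that $M_n$ stays bounded and bounded away from zero. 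After subtracting $d_1 W_{1,n}(0)$ and passing to the limit, one obtains a nonnegative, nonconstant, globally $\alpha$-H\"older $Q$ on $\R^n$ solving $-\delta^{-3}Q^2 \le -\Delta Q \le -\delta^{3}Q^2$, which Brezis's Liouville theorem for $-\Delta u \le -c\,u^2$ forces to vanish --- the desired contradiction. You should flag this W/H dichotomy explicitly and supply the Case~H analysis; without it the argument is incomplete precisely in the regime that distinguishes the unbounded-$N$ setting from the classical fixed-$N$ results.
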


Next, we consider the singular limit when $\beta \to +\infty$. We state in particular that, for $\beta \to +\infty$, at most a finite number of components of the limit solutions are non-zero.

\begin{theorem}\label{prp sing lim}
Under the same assumptions as in Theorem \ref{prp asymptotic k}, let $\{\mathbf{v}_\beta\}_\beta$ be a family of non negative solutions as above, defined for a sequence of $\beta \to +\infty$. Up to subsequences, there exists $\mathbf{\bar v} =  (\mathbf{\bar w}, \bar u) \in H^1(\Omega)$ such that
\begin{itemize}
	\item the vector $\mathbf{\bar w}$ has at most $\hat N$ non zero components, where $\hat N$ is given by
	\[
		\hat N = C(\Omega) \left(\max_{i}\frac{\lambda k_i - \mu \omega_i}{d_i \mu}\right)^{\frac{n}{2}}
	\]
	and $C(\Omega)$ is a positive constant that only depends on the set $\Omega$;
	\item for all $\alpha \in (0,1)$, we have the estimate
	\[
		\| \bar u \|_{\C^{2,\alpha}(\overline \Omega)} + \max_{i \in \{1, \dots, N\}} \| \bar w_i \|_{Lip(\overline{\Omega})} \leq C_\alpha
	\]
	where $C_\alpha$ is the same constant as in Theorem \ref{prp asymptotic k};
	\item for any $\alpha \in (0,1)$, we also have
	\[
		\lim_{\beta \to +\infty} w_{i,\beta} = \bar w_{i} \quad \text{in } C^{0,\alpha}\cap H^1(\overline{\Omega}), \forall i \qquad \text{and} \qquad \lim_{\beta \to +\infty} u_{\beta} = \bar u \quad \text{in }C^{2,\alpha}(\overline{\Omega}).
	\]
\end{itemize}
\end{theorem}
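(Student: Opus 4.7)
The plan is to combine the uniform estimates of Theorem \ref{prp asymptotic k} with a diagonal extraction, exploit the variational structure of the competition term to establish segregation of the limit, and then bound the number of surviving components via a Faber--Krahn type eigenvalue argument. First, since $\{u_\beta\}$ is uniformly bounded in $C^{2,\alpha}(\overline\Omega)$, Arzel\`a--Ascoli yields $\bar u \in C^{2,\alpha}(\overline\Omega)$ with $u_\beta \to \bar u$ in $C^{2,\alpha'}(\overline\Omega)$ for any $\alpha' < \alpha$; the estimate on $\bar u$ passes through by lower semicontinuity of the H\"older norm. For the predator components, Theorem \ref{prp asymptotic k} provides uniform $C^{0,\alpha}$ bounds for each $w_{i,\beta}$ as well as for the sum $\sum_i w_{i,\beta}$. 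A diagonal extraction in $i$ then yields a subsequence along which $w_{i,\beta} \to \bar w_i$ in $C^{0,\alpha'}(\overline\Omega)$ for every $i$, and the uniform bound on $\sum_i w_{i,\beta}$ ensures that only finitely many $\bar w_i$ contribute meaningfully in any $L^p$ norm.

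\textbf{Segregation and $H^1$ convergence.} Testing the $w_{i,\beta}$-equation against $w_{i,\beta}$ itself and integrating by parts gives
\[
 d_i \int_\Omega |\nabla w_{i,\beta}|^2 + \omega_i \int_\Omega w_{i,\beta}^2 + \beta \sum_{j \neq i} a_{ij} \int_\Omega w_{i,\beta}^2\, w_{j,\beta} = k_i \int_\Omega u_\beta\, w_{i,\beta}^2.
\]
The right-hand side is uniformly bounded thanks to Theorem \ref{prp asymptotic k}, so $\{\nabla w_{i,\beta}\}$ is bounded in $L^2$ and $\beta \int_\Omega w_{i,\beta}^2 w_{j,\beta} \leq C$. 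Uniform convergence then forces $\bar w_i \bar w_j = 0$ a.e.\ for $i \neq j$, i.e.\ the $\bar w_i$ are mutually segregated. Weak $H^1$ limits identify with $\bar w_i$; strong $H^1$ convergence follows by testing the equation against $w_{i,\beta} - \bar w_i$, noting that the positivity of the competition term $\beta \sum_j a_{ij} \int w_{j,\beta} w_{i,\beta} (w_{i,\beta} - \bar w_i)$ can be handled by expanding and using $\bar w_i \bar w_j = 0$ together with the uniform bound on $\beta \int w_{i,\beta}^2 w_{j,\beta}$.

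\textbf{Lipschitz regularity.} On the open set $\{\bar w_i > 0\}$ the function $\bar w_i$ solves the linear equation $-d_i \Delta \bar w_i = (-\omega_i + k_i \bar u) \bar w_i$, with $\bar w_i = 0$ on the free boundary inside $\Omega$ and Neumann conditions on $\partial \Omega$. The uniform Lipschitz estimate for segregated limits arising from strongly competing systems is by now a standard consequence of the machinery developed for such problems (Almgren/Alt--Caffarelli--Friedman type monotonicity formulas together with blow-up analysis), and in the present setting it is precisely the content of the regularity theory set up in Part I \cite{BerestyckiZilio_PI}, which applies uniformly in $N$ once the $L^\infty$ and $C^{0,\alpha}$ bounds of Theorem \ref{prp asymptotic k} are in hand. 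This is the step I expect to be the most delicate, and I would invoke the results of Part I rather than redevelop the monotonicity arguments here.

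\textbf{Counting the non-zero components.} Let $\omega_i := \{\bar w_i > 0\}$; these sets are pairwise disjoint and open, contained in $\Omega$. If $\bar w_i \not\equiv 0$, then $\bar w_i$ is a positive eigenfunction-type solution on $\omega_i$ with Dirichlet data on $\partial \omega_i \cap \Omega$ and Neumann data on $\partial \omega_i \cap \partial \Omega$, whence
\[
 \lambda_1^{\mathrm{mix}}(\omega_i) \leq \frac{-\omega_i + k_i \|\bar u\|_\infty}{d_i} \leq \frac{\lambda k_i - \mu \omega_i}{d_i \mu}.
\]
A Faber--Krahn inequality (adapted to the mixed boundary conditions, using that $\partial \Omega$ is smooth) forces
\[
 |\omega_i| \geq c(n,\Omega) \left(\frac{d_i \mu}{\lambda k_i - \mu \omega_i}\right)^{n/2}.
\]
Summing over the non-trivial components and using $\sum_i |\omega_i| \leq |\Omega|$ yields the asserted bound on $\hat N$. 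Finally, the $C^{2,\alpha}$ convergence of $u_\beta \to \bar u$ is upgraded from $C^{2,\alpha'}$ by Schauder's estimates applied to the limit equation, since $\bar u$ solves a scalar equation with $C^{0,\alpha}$ right-hand side.
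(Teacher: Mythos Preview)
Your outline follows the same architecture as the paper's proof: Arzel\`a--Ascoli for compactness, the energy identity for $H^1$ bounds and segregation (this is Lemma~\ref{lem limits are h1}), and the Faber--Krahn argument for the component count (Lemma~\ref{lem min vol}). Two points deserve attention.

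First, you assert that $\bar w_i$ solves $-d_i\Delta \bar w_i = (-\omega_i + k_i\bar u)\bar w_i$ on $\{\bar w_i>0\}$, and you need this both for the eigenvalue inequality and implicitly for the Lipschitz step. But segregation ($\bar w_i\bar w_j=0$) and $H^1$ convergence do not by themselves give this: the weak limit of $\beta\, w_{i,\beta}\sum_{j\neq i}a_{ij}w_{j,\beta}$ is a nonnegative measure $\mu_i$, and one must show $\mu_i$ is supported on $\partial\{\bar w_i>0\}$. The paper does this in Lemma~\ref{lem eq in support} via an exponential barrier (Lemma~\ref{lem dec exp}): on a ball where $\bar w_i\geq c>0$, the sum $\sum_{j\neq i}d_jw_{j,\beta}$ satisfies $-\Delta h \leq -C\beta h$ and hence $h\leq Ce^{-c'\sqrt\beta}$, so the competition term vanishes pointwise there. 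This is the one technical ingredient missing from your sketch; without it the eigenvalue comparison is unjustified.

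Second, the Lipschitz bound is not in Part~I. The paper obtains it only \emph{after} Lemma~\ref{lem min vol} shows that $\bar{\mathbf w}$ has at most $\hat N$ nonzero components; at that point the limit falls under the standard regularity theory for segregated configurations with finitely many phases (e.g.\ \cite{SoaveZilio_ARMA}, and the free-boundary structure of Theorem~\ref{prp free boundary}). So the logical order is: segregation $\to$ equation on the support $\to$ Faber--Krahn $\to$ finitely many components $\to$ Lipschitz, not the order you have.
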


The next result characterizes the limit solutions and the free-boundary problem that they satisfy.

\begin{theorem}\label{prp free boundary}
We assume the hypotheses of Theorem \ref{prp sing lim}. Any limit $\mathbf{\bar v}$ is such that $\bar w_i \bar w_j \equiv 0$ in $\overline{\Omega}$ for all $i \neq j$. The non zero components of $\mathbf{\bar v}$ are in a finite number and satisfy the following system of complementary inequalities in the sense of measures:
\begin{equation}\label{eqn segr model}
	\begin{dcases}
		- d_i \Delta w_i  \leq (- \omega_i + k_i u -\mu_i) w_i \\
		- \Delta \left( d_i w_i-\sum_{j \neq i} d_j w_j \right) \geq (- \omega_i + k_i u )w_i-\sum_{j \neq i} (- \omega_j + k_j u ) w_j &\text{in $\Omega$}\\
		- D \Delta u = \left(\lambda - \mu u - \sum_{i} k_i w_i \right)u\\
		w_i \partial_\nu w_i = \partial_\nu u = 0 &\text{on $\partial \Omega$}.
	\end{dcases}
\end{equation}
Lastly, if the limit $\mathbf{\bar w}$ has two or more non zero components, then the subset $\mathfrak{N} := \{x \in \overline{\Omega}: w_i = 0,\, \forall i\}$ is a rectifiable set of Hausdorff dimension $n-1$. The set $\mathfrak{N}$ can be written as the disjoint union of two sets, $\mathfrak{R}$ and $\mathfrak{S}$, such that $\mathfrak{R}$ is relatively open and made of the union of a finite number of $\C^{1,\alpha}$ smooth sub-manifolds, while $\mathfrak{S}$ is a set of Hausdorff dimension $n-2$. Moreover, $\mathfrak{R}$ meets orthogonally the boundary $\partial \Omega$ and $\mathfrak{N} \cap \partial \Omega$ is a set of Hausdorff dimension $n-2$, that can be decomposed as the disjoint union of a regular part (finite union of  $\C^{1,\alpha}$ smooth sub-manifolds of codimension $2$) and a singular part. 

\end{theorem}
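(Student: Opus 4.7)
The plan is to establish the four assertions in order: segregation, the differential system \eqref{eqn segr model}, the finite number of non-zero components (already contained in Theorem \ref{prp sing lim}), and the stratification of the nodal set up to the boundary. Throughout I would use the uniform estimates from Theorems \ref{prp asymptotic k}--\ref{prp sing lim} together with the by now classical theory of strongly competing variational systems.

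\textbf{Segregation and limit inequalities.} I would first test the $i$-th equation in \eqref{eqn model k} against a cut-off function and use the Neumann condition together with the $L^\infty$ bounds of Theorem \ref{prp asymptotic k} to obtain $\beta \int_\Omega a_{ij}\, w_{i,\beta}\, w_{j,\beta}\,dx \leq C$ independently of $\beta$ and $N$. Since $\beta \to \infty$, we conclude $w_{i,\beta} w_{j,\beta} \to 0$ in $L^1(\Omega)$, which combined with the $C^{0,\alpha}$-convergence from Theorem \ref{prp sing lim} gives $\bar w_i \bar w_j \equiv 0$ on $\overline\Omega$. The first inequality of \eqref{eqn segr model} then holds already at finite $\beta$ (since $-\beta w_i \sum_{j \neq i} a_{ij} w_j \leq 0$), and it is preserved in the distributional limit. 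For the second, I would introduce $\Phi_i := d_i w_i - \sum_{j\neq i} d_j w_j$ and compute directly
\[
-\Delta \Phi_i = (-\omega_i + k_i u)w_i - \sum_{j\neq i}(-\omega_j + k_j u)w_j + \beta \sum_{\substack{j,\,l \neq i\\ j \neq l}} a_{jl}\, w_j w_l,
\]
where the symmetry $a_{ij}=a_{ji}$ is precisely what causes the pair-interaction terms to cancel, leaving only the nonnegative triple-interaction remainder. Hence $-\Delta \Phi_i \geq (-\omega_i + k_i u)w_i - \sum_{j \neq i}(-\omega_j + k_j u)w_j$ uniformly in $\beta$, and this inequality passes to the limit as a measure inequality. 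The equation for $\bar u$ and the boundary condition $\partial_\nu \bar u = 0$ follow from the $C^{2,\alpha}$-convergence of Theorem \ref{prp sing lim}, while $\bar w_i\, \partial_\nu \bar w_i = 0$ is automatic since $\partial_\nu w_{i,\beta}=0$ on $\partial\Omega$.

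\textbf{Stratification of $\mathfrak{N}$ and behaviour at $\partial\Omega$.} The finite cardinality of non-zero $\bar w_i$ comes directly from Theorem \ref{prp sing lim}. With Lipschitz regularity, segregation and \eqref{eqn segr model} in hand, one is in the setting of strongly competing variational systems developed by Conti--Terracini--Verzini, Caffarelli--Karakhanyan--Lin, Tavares--Terracini, Soave and collaborators, whose blueprint I would follow. The essential tool is an Almgren/Alt--Caffarelli--Friedman-type monotonicity formula, adapted to the weighted combinations $\Phi_i$, around points $x_0 \in \mathfrak{N}$: this classifies blow-up profiles by their homogeneity. Homogeneity $1$ corresponds to two-phase tangency and identifies the regular part $\mathfrak{R}$, which is then upgraded to a locally finite union of $C^{1,\alpha}$ hypersurfaces by an improvement-of-flatness argument; higher homogeneity identifies the singular part $\mathfrak{S}$, whose $(n-2)$-dimensional Hausdorff bound follows from a Federer-type dimension reduction. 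To treat $\mathfrak{N}\cap\partial\Omega$ I would locally flatten the boundary and perform an even reflection: the conditions $\partial_\nu\bar u=0$ and $\bar w_i\partial_\nu \bar w_i=0$ ensure that the reflected configuration still satisfies \eqref{eqn segr model} in the doubled domain, so the interior theory applies and the orthogonality of $\mathfrak{R}$ with $\partial \Omega$ expresses the reflection symmetry of the doubled problem.

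\textbf{Expected main obstacle.} The principal difficulty is implementing the monotonicity-and-blow-up machinery, and then extending it up to $\partial\Omega$. Because the diffusion coefficients $d_i$ need not be equal and $N$ is arbitrary, the monotonicity must be set up for the weighted combinations $\Phi_i$ rather than for a single Almgren frequency, and the blow-up classification must handle a priori arbitrarily many phases meeting at a single junction while keeping all constants independent of $N$. Rigorously validating the reflection argument across a curved $\partial \Omega$ in the mixed boundary regime $\bar w_i \partial_\nu \bar w_i = 0$ is the other technical pain point.
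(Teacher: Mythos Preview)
Your proposal is correct and follows essentially the same route as the paper: derive the complementary inequality by computing $-\Delta\Phi_i$ and using the symmetry $a_{ij}=a_{ji}$ to leave a nonnegative remainder (this is the paper's Lemma~\ref{lem compl cond}), then invoke the existing free-boundary theory (Caffarelli--Karakhanyan--Lin, Tavares--Terracini) for the interior structure of $\mathfrak{N}$, and handle $\mathfrak{N}\cap\partial\Omega$ by locally flattening $\partial\Omega$ and reflecting evenly so that the interior theory applies to a divergence-form system with $C^{1,\alpha}$ coefficients.

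One clarification regarding your ``expected main obstacle'': the worry about keeping the monotonicity/blow-up machinery uniform in $N$ is misplaced. By the time you analyse $\mathfrak{N}$ you are already at the limit $\bar{\mathbf w}$, which by Theorem~\ref{prp sing lim} has at most $\hat N$ non-zero components; hence the free-boundary step is genuinely a finite-component problem and the cited results apply as black boxes, with no need for $N$-independent constants or a bespoke monotonicity formula for the $\Phi_i$. The paper exploits exactly this reduction and simply quotes \cite{CaffKarLin,TaTe} rather than reproving the stratification.
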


\begin{remark}
Observe that the Lipschitz bound of the densities is stated for the limit functions. In the case $N$ is a priori bounded, such a bound of the Lipschitz norm also holds for the whole convergent sequence, uniformly in $\beta$. This follows from \cite[Theorem 1.2]{SoaveZilio_ARMA}. It is not clear at the moment whether the same result holds true in the more general case when $N$ is unbounded. We also leave it as an open problem to know if the same a priori estimates holds when the assumption $a_{ij} = a_{ji}$ is removed.
\end{remark}

The next result approximately describes the solution when $N$ is large. Here $\hat N$ stands for the constant in Theorem \ref{prp sing lim}.

\begin{theorem}\label{thm max packs}
We assume the same hypothesis of Theorem \ref{prp free boundary}. There exists $\bar \beta > 0$, independent of $N$, such that if $\beta > \bar \beta$ and $\mathbf{v}_\beta = (\mathbf{w}_{\beta}, u_{\beta})$ is a solution of \eqref{eqn model k} then
\begin{itemize}
	\item either at most $\hat N$ components of $\mathbf{w}_\beta$ are strictly positive and the others are zero;
	\item or the solution is such that
\[
	 \max_{i=1,\dots, N} \|w_{i,\beta}\|_{\C^{0,\alpha}(\Omega)} + \|u_\beta- \lambda/\mu\|_{\C^{2,\alpha}(\Omega)} = o_\beta(1)
\]
for every $\alpha \in (0,1)$.
\end{itemize}
\end{theorem}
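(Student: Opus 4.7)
My plan is to argue by contradiction, combining the compactness of Theorem~\ref{prp sing lim} with a principal-eigenvalue and Faber--Krahn counting argument, crucially using that the constants in Theorems~\ref{prp asymptotic k}--\ref{prp sing lim} are uniform in $N$. Suppose the dichotomy fails: there exist $\beta_h\to+\infty$, integers $N_h$, and solutions $\mathbf{v}_h = (\mathbf{w}_h, u_h)$ of \eqref{eqn model k} whose number $N_h^+$ of strictly positive components of $\mathbf{w}_h$ exceeds $\hat N$ and for which
\[
\max_{1\leq i \leq N_h} \|w_{i,\beta_h}\|_{C^{0,\alpha}(\overline\Omega)} + \|u_h - \lambda/\mu\|_{C^{2,\alpha}(\overline\Omega)} \geq c_0 > 0.
\]
Applying Theorem~\ref{prp sing lim}, up to a subsequence I extract a limit $\bar{\mathbf{v}}=(\bar{\mathbf{w}},\bar u)$ with at most $\hat N$ non-zero components $\bar w_{\ell_1},\dots,\bar w_{\ell_m}$, $m\leq \hat N$. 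The bound $c_0$ rules out $\bar{\mathbf{v}}=(0,\lambda/\mu)$; the case in which all $\bar w_i\equiv 0$ with $\bar u\equiv 0$ is excluded because testing the equation for $w_{i,\beta_h}$ against itself with $\|u_h\|_\infty\to 0$ forces $w_{i,\beta_h}\equiv 0$ for $h$ large, reducing $u_h$ to a Neumann solution of the scalar logistic equation (whose only non-negative solutions are $0$ and $\lambda/\mu$), contradicting $N_h^+>0$. Hence at least one $\bar w_{\ell_r}$ is non-trivial.

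Since $N_h^+>\hat N\geq m$, for each $h$ there is at least one ``dying'' index $i_h$ with $w_{i_h,\beta_h}>0$ strictly (strong maximum principle) and $\bar w_{i_h}\equiv 0$. Such a positive function is the principal Neumann eigenfunction, with eigenvalue $0$, of
\[
L_{h}\varphi = -d_{i_h}\Delta\varphi+\Bigl(\omega_{i_h}-k_{i_h}u_h+\beta_h\sum_{j\neq i_h}a_{i_h,j}w_{j,\beta_h}\Bigr)\varphi.
\]
Setting $K_r=\operatorname{supp}(\bar w_{\ell_r})$ and $\Omega_0 := \Omega\setminus\overline{\bigcup_r K_r}$, on each $K_r^\circ$ the potential $\beta_h a_{i_h,\ell_r} w_{\ell_r,\beta_h}$ diverges to $+\infty$ uniformly on compact sets; testing the equation against $w_{i_h,\beta_h}$ itself then shows that the renormalized sequence $\hat w_h := w_{i_h,\beta_h}/\|w_{i_h,\beta_h}\|_{L^2}$ is $H^1$-bounded and its weak limit $\hat w\not\equiv 0$ lies in $H^1(\Omega_0)$, has zero Dirichlet trace on $\partial K_r\cap \Omega$, and satisfies $\int_{\Omega_0}|\nabla\hat w|^2/\int_{\Omega_0}\hat w^2\leq (k_{i_h}\lambda/\mu - \omega_{i_h})/d_{i_h}$. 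A Faber--Krahn inequality for the resulting mixed boundary problem then yields a universal lower bound $|\Omega_0|\geq c$ with $c=c(\delta, n, \Omega)>0$; combined with the companion bounds $|K_r|\geq c$ for each surviving component---exactly the estimates yielding $\hat N$ in Theorem~\ref{prp sing lim}---a single dying component already produces $|\Omega|\geq (m+1)c$.

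To reach a contradiction with $N_h^+>\hat N$, I carry out the analysis simultaneously for all $p:=N_h^+ - m \geq 1$ dying indices. The strong-competition estimate $\beta_h\int w_{i',\beta_h}w_{i'',\beta_h}\leq C\|w_{i',\beta_h}\|_{L^2}$, obtained by integrating the equation for $w_{i',\beta_h}$ over $\Omega$ with Neumann data, combined with a careful choice of normalization, should yield that the $p$ rescaled dying limits span a $p$-dimensional subspace of the mixed-BC energy space on $\Omega_0$ with Rayleigh quotients uniformly bounded. By Courant--Fischer and the Li--Yau-type bound $\lambda_p^{\mathrm{Dir}}(\Omega_0)\geq Cp^{2/n}/|\Omega_0|^{2/n}$, this upgrades the volume estimate to $|\Omega_0|\geq p\,c$, whence $|\Omega|\geq (m+p)c = N_h^+\,c > \hat N\,c = |\Omega|$, the required contradiction.

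The hardest step will be the linear independence of the $p$ rescaled dying limits. The naive orthogonality bound $\int \hat w_{i',h}\hat w_{i'',h}\leq C/(\beta_h\|w_{i'',\beta_h}\|_{L^2})$ need not vanish when $\|w_{i'',\beta_h}\|_{L^2}$ decays faster than $1/\beta_h$, a rate I cannot exclude a priori. Recovering effective segregation of the rescaled ghosts inside $\Omega_0$ will likely require an adapted normalization (possibly $L^\infty$ together with a spatial blow-up) in tandem with the uniform $C^{0,\alpha}$ estimates of Theorem~\ref{prp asymptotic k} and the strong-competition/segregation balance underlying Theorem~\ref{prp free boundary}.
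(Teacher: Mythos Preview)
Your overall strategy---contradiction via compactness and normalization of a ``dying'' component---matches the paper's, and your first steps (extracting a limit with $m\geq 1$ non-zero surviving components, ruling out the trivial limits, showing the normalized dying limit $\hat w$ is non-trivial in $H^1$ and supported in $\Omega_0$) are correct. But the volume-counting route you then take leaves the proof with a genuine, self-acknowledged gap: the linear independence of the $p$ rescaled dying limits. This is not merely technical. With $L^2$ normalization there is no a~priori reason the rescaled ghosts should segregate from one another in the limit, and without that your Courant--Fischer/Li--Yau argument cannot proceed; nor is it clear that the constants in your chain $|\Omega|\geq N_h^+\,c>\hat N\,c=|\Omega|$ align sharply enough to close.

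The missing observation is that volume counting is unnecessary: a \emph{single} dying component already yields the contradiction. You have shown that $\hat w\not\equiv 0$ is supported in $\Omega_0 = \Omega\setminus\bigcup_r\{\bar w_{\ell_r}>0\}$, but $\Omega_0$ is precisely the common nodal set $\mathfrak{N}$ of the limit configuration. Theorem~\ref{prp free boundary}---which is among your hypotheses---asserts that $\mathfrak{N}$ has Hausdorff dimension $n-1$, hence Lebesgue measure zero, whenever $m\geq 2$; when $m=1$, the complementary inequalities of \eqref{eqn segr model} force the defect measure to vanish, so the lone surviving component solves $-d_{\ell_1}\Delta\bar w_{\ell_1}=(-\omega_{\ell_1}+k_{\ell_1}\bar u)\bar w_{\ell_1}$ on all of $\Omega$ and is strictly positive by the strong maximum principle, giving $\mathfrak{N}=\emptyset$. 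Either way $|\Omega_0|=0$, and a non-trivial $L^2$ function cannot be supported there: contradiction. No Faber--Krahn on $\Omega_0$, no multiple dying components, no linear independence needed.

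This is precisely the paper's route. The paper normalizes the extra component in $L^\infty$ (using a comparison with an auxiliary linear problem to show the limit is non-trivial) and then exploits the exhausting sets $\mathfrak{P}_\eps=\{\sum_{i=1}^{N}\bar w_i>\eps\}$, whose complement shrinks to the measure-zero set $\mathfrak{N}$, to force $\int_\Omega \bar w_{N+1}^2=0$. Your $L^2$ normalization together with Rellich compactness would work equally well here; the essential point is to invoke $|\mathfrak{N}|=0$ rather than attempt to bound $|\Omega_0|$ from below.
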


In our forthcoming work \cite{BerestyckiZilio_RR}, we actually derive a stronger result, under the additional assumptions that the coefficients in \eqref{eqn model k} do not depend on the density $w_i$. In \cite[Theorem 1.1]{BerestyckiZilio_RR} we show that if $N$ (the number of non zero components of $\mathbf{w}$) is large enough, then \eqref{eqn model k} has only constant solutions, independently of the value of $\beta \geq 0$. The proof hinges on the various a priori estimates that we establish in the present paper.

\noindent\textbf{Structure of paper.} The proofs of Theorems \ref{prp asymptotic k}, \ref{prp sing lim} and \ref{prp free boundary} will be structured in a succession of intermediate results. We will establish successively uniform $L^\infty$ bounds, uniform H\"older bounds, convergence to segregated limits and the upper bound on the number of non-zero components of the limit problem. The structure of the free boundary will follow from already established results, we sketch the arguments in Section 4. We will conclude with a sketch of the proof of Theorem \ref{thm max packs}, as it follows very closely that of \cite[Theorem 4.3]{BerestyckiZilio_PI}.

Before proceeding with the proofs, we point out that the solutions of \eqref{eqn model k} are smooth for $\beta$ and $N$ bounded, and their regularity is only limited by the regularity of the boundary of $\Omega$.

\section{A priori estimates and uniform bounds - Proof of Theorem \ref{prp asymptotic k}} 

First, we prove the uniform regularity estimates for the solutions of \eqref{eqn model k}. We will achieve this in a sequence of partial regularity results.

\begin{proof}[Proof of Theorem \ref{prp asymptotic k} - $L^\infty$ bounds]
We start by showing the uniform upper bound of $\mathbf{v}$. From the equation satisfied by the component $u$ in \eqref{eqn model k}, we find that
\[
  \begin{cases}
	  -D \Delta u = \lambda u - \mu u^2 - \sum_{j\neq i} k_i w_i u \leq \lambda u - \mu u^2 &\text{in $\Omega$}\\
	  \partial_\nu u = 0 &\text{on $\partial \Omega$}
	\end{cases}
\]
Thus, the left hand side is negative if $u > \lambda / \mu$. By the maximum principle, we obtain that $u \leq \lambda / \mu \leq \delta^{-2}$. We now introduce the function $S := D u + \sum_i d_i w_i$. Summing together the $N+1$ equations in the system, we get
\begin{multline*}
	-\Delta S = -\sum_{i} \omega_i w_i - \beta\sum_{j\neq i} a_{ji} w_i w_j + \lambda u - \mu u^2 \leq - \delta^2 D u -\sum_{i} \omega_i w_i + ( \delta^2 D +\lambda) u - \mu u^2 \\
	\leq -\delta^2 S + (\lambda + \delta^2 D) u - \mu u^2 \leq -\delta^2 S + \frac{(\lambda + \delta^2 D )^2}{4 \mu} \leq  -\delta^2 S + \frac{1}{\delta^3}
\end{multline*}
where $\delta >0$ is the constant appearing in \eqref{unifass}. It follows that the left hand side is again negative if $S > 1/\delta^5$. We can conclude that
\[
  0 \leq u \leq \frac{\lambda}{\mu} \leq \frac{1}{\delta^2} \qquad \text{and} \qquad 0 \leq \sum_{i=1}^N w_i \leq \frac{1}{\delta^6}.
\]
This completes the proof of the uniform $L^\infty$ bounds.
\end{proof}

\begin{proof}[Proof of Theorem \ref{prp asymptotic k} - $C^{1,\alpha}$ (and $C^{2,\alpha}$) uniform bounds for $u$]
Once the $L^\infty$ uniform estimate is settled, we can proceed by decoupling the equation in $u$ from those in $\mathbf{w}$. By the previous estimates, $u$ is bounded uniformly in $L^\infty$ and so are all the terms of the equation
\[
	\begin{cases}
		- D \Delta u = \left( \lambda - \mu u - \sum_{i=1}^N k_i w_i\right) u &\text{ in $\Omega$}\\
		\partial_\nu u = 0 &\text{ on $\partial \Omega$}.
	\end{cases}	
\]
By the standard elliptic estimates, we find that $u \in W^{2,p}(\Omega)$ for any $p < +\infty$ and, thus, for any $\alpha \in (0,1)$ there exists $C_\alpha = C(\alpha, \delta, \Omega) > 0$, such that
\[
	\|u\|_{C^{1,\alpha}(\overline{\Omega})} \leq C.
\]
We observe also that, once we will have established the $C^{0,\alpha}$ uniform bounds for $\mathbf{w}$ (more precisely, for the sum of $k_i w_i$), the $C^{2,\alpha}$ uniform bounds for $u$ then follow from Schauder estimates for the above equation. So we focus on the former.
\end{proof}

\begin{proof}[Proof of Theorem \ref{prp asymptotic k} - $C^{0,\alpha}$ uniform bounds for $\mathbf{w}$]
We now turn our attention to the system satisfied by $\mathbf{w}$ in \eqref{eqn model k}. If the number $N$ was a given constant and not arbitrary as in our case, the theory developed in \cite{CaffKarLin, ContiTerraciniVerzini_AdvMat_2005} would be sufficient to show the uniform bounds on the components of $\mathbf{w}_\beta$ independently of $\beta$. Since we want to prove here bounds that are also uniform in $N$, we need to introduce a new method. The remainder of this section is dedicated to the proof of this result.
\end{proof}

We will only need to consider the sub-system satisfied by the components of $\mathbf{w}$. For any $\alpha \in (0,1)$, we wish to show the uniform estimates
\[
  \max_{i \in \{1, \dots, N\}} \| w_i \|_{C^{0,\alpha}(\overline{\Omega})} + \max_{\delta \leq \pi_i \leq 1/	\delta} \left\| \sum_{1=1}^N \pi_i w_i \right\|_{C^{0,\alpha}(\overline{\Omega})} \leq C_\alpha \left\| \sum_{1=1}^N w_i \right\|_{L^{\infty}(\Omega)},
\]
for arbitrarily fixed values of the parameters $\pi_i$, $\pi_i >0$. First of all, we renormalize the components by letting
\[
  \frac{w_i}{\left\|\sum_{i=1}^N w_i \right\|_{L^\infty(\Omega)} } \mapsto w_i \qquad \text{and} \quad \beta \left\|\sum_{i=1}^N w_i \right\|_{L^\infty(\Omega)} \mapsto \beta
\]
By doing so, we end up with the system
\begin{equation}\label{eqn subsys renorm}
	\begin{cases}
		- d_i \Delta w_i  = \left(- \omega_i + k_i u - \beta \sum_{j \neq i} a_{ij} w_j\right) w_i &\text{in $\Omega$}\\
		\sum_{i=1}^N w_i \leq 1 &\text{in $\Omega$}\\
		\partial_\nu w_i = 0 &\text{on $\partial \Omega$}
	\end{cases}
\end{equation}
and the estimate we wish to prove translates into the inequality
\[
  \max_{i \in \{1, \dots, N\}} \| w_i \|_{C^{0,\alpha}(\overline{\Omega})} + \max_{\delta \leq \pi_i \leq 1/	\delta} \left\| \sum_{1=1}^N \pi_i w_i \right\|_{C^{0,\alpha}(\overline{\Omega})} \leq C_\alpha.
\]
To prove it, we assume, by virtue of contradiction, that there exists an exponent $\alpha \in (0,1)$, a sequence of solution $(\mathbf{w}_n, u_n)$ of \eqref{eqn subsys renorm} and $\delta \leq \pi_{i,n} \leq 1/\delta$ such that
\begin{equation}\label{eqn bu assumpA}
	\max_{i=1,\dots,N_n} \|w_{i,n}\|_{C^{0,\alpha}(\overline{\Omega})} + \left\|\sum_{i=1}^{N_n} \pi_{i,n} w_{i,n} \right\|_{C^{0,\alpha}(\overline{\Omega})} \to +\infty
\end{equation}
Observe that the functions $\mathbf{w}_n$, as well as their sum, are uniformly bounded. Therefore, in \eqref{eqn bu assumpA} we have that the unboundedness of the norm is caused solely by the H\"older quotient part.

In order to simplify the exposition of the proof, we will assume from now on that
\[
  \pi_{i,n} = d_i \qquad \forall i,n.
\]
The general case follows in the same spirit by considering the system
\[
	\begin{cases}
		- \frac{d_i}{\pi_{i,n}} \Delta (\pi_{i,n} w_i)  = \left(\frac{- \omega_i}{\pi_{i,n}} + \frac{k_i}{\pi_{i,n}} u - \beta \sum_{j \neq i} \frac{a_{ij}}{\pi_{i,n}\pi_{j,n}} (\pi_{j,n}w_j)\right) (\pi_{i,n}w_{i,n}) &\text{in $\Omega$}\\
		\sum_{i=1}^N (\pi_{i,n}w_i) \leq \frac{1}{\delta} &\text{in $\Omega$}\\
		\partial_\nu (\pi_{i,n}w_i) = 0 &\text{on $\partial \Omega$.}
	\end{cases}
\]
Observe that, by assumption, the coefficients $\pi_{i,n}$ and the ratios $d_i/\pi_{i,n}$, $\omega_i/\pi_{i,n}$, $k_i/\pi_{i,n}$ and $a_{ij}/(\pi_{i,n}\pi_{j,n})$ are bounded from above and away from zero, and the matrix $a_{ij}/(\pi_{i,n}\pi_{j,n})$ is still symmetric. These are the only assumptions that we will need in the proof.

In the following, we will reach a contradiction by using the sequence $(\mathbf{w}_n, u_n)$ and the assumption \eqref{eqn bu assumpA}. To do this, we will need to repeatedly extract sub-sequences in order to refine some properties of the original blow-up sequence. For brevity, by \utss\ we mean ``up to striking out a sub-sequence''.

We introduce the auxiliary sequence of functions $h_n : \overline{\Omega} \to \R_+$, defined as
\[
	h_n(x) := \sum_{i =1}^{N_n} d_i w_{i,n}(x).
\]
Thus, our aim is to reach a contradiction with the assumption
\begin{equation}\label{eqn bu assump}
	\max_{i=1,\dots,N_n} \|w_{i,n}\|_{C^{0,\alpha}(\overline{\Omega})} +  \|h_n\|_{C^{0,\alpha}(\overline{\Omega})} \to +\infty.
\end{equation}

We show the result by means of a blow-up argument. We need to distinguish two cases:

\noindent\textbf{Case W)} There exist a subsequence of $\mathbf{w}_n$ and a constant $C > 0$ such that
\[
	 \|h_n\|_{C^{0,\alpha}(\overline{\Omega})} < C \max_{i=1,\dots,N_n} \|w_{i,n}\|_{C^{0,\alpha}(\overline{\Omega})}.
\]

\noindent\textbf{Case H)} There exist a subsequence of $\mathbf{w}_n$ and a sequence $C_n \to +\infty$ such that
\[
	 \|h_n\|_{C^{0,\alpha}(\overline{\Omega})} > C_n \max_{i=1,\dots,N_n} \|w_{i,n}\|_{C^{0,\alpha}(\overline{\Omega})}.
\]

Observe that both cases are equally possible. Since the number of components in $\mathbf{w}_n$ is not a priori bounded, it may be that the functions are uniformly bounded, but their sum may not be so. The two cases will be addressed separately, but some preliminary results are valid for both, and we present them jointly.

First, as we have already observed, for any fixed $n$, any solution in the sequence $(\mathbf{w}_n, h_n)$ is $C^{2,\alpha}$, in particular $C^1$. Therefore, there exist a sequence of pairs of distinct points $(x_n, y_n) \in \overline{\Omega} \times \overline{\Omega}$, $x_n \neq y_n$, that achieve the H\"older part of the norm of either $\mathbf{w}_n$ or $h_n$, that is such that in \textbf{Case W)} 
\[
	L_n := \max_{i=1,\dots,N_n} \|w_{i,n}\|_{C^{0,\alpha}(\overline{\Omega})} = \max_{i=1,\dots,N_n} \frac{|w_{i,n}(x_n) - w_{i,n}(y_n)|}{|x_n-y_n|^\alpha} \quad \text{ and } L_n \geq \frac{1}{C} \|h_n\|_{C^{0,\alpha}(\overline{\Omega})},
\]
while in \textbf{Case H)}
\[
	L_n := \|h_n\|_{C^{0,\alpha}(\overline{\Omega})} =  \frac{|h_{n}(x_n) - h_{n}(y_n)|}{|x_n-y_n|^\alpha} \quad \text{ and } L_n \geq C_n \max_{i=1,\dots,N_n} \|w_{i,n}\|_{C^{0,\alpha}(\overline{\Omega})}.
\]
We also define the sequence $r_n := |x_n - y_n|$. Notice the different definitions of $L_n$ in the two cases. Either way, we have $L_n \to +\infty$ (in view of \eqref{eqn bu assump}) and
\[
	L_n \leq \max_{i=1,\dots,N_n} \|w_{i,n}\|_{C^{0,\alpha}(\overline{\Omega})} +  \|h_n\|_{C^{0,\alpha}(\overline{\Omega})}  \leq C' L_n
\]
for some constant $C' > 1$.

We now introduce the blow-up sequence at the core of the contradiction argument. For any $n \in \N$, we let
\begin{equation}\label{eqn bu seq}
	\mathbf{W}_n(x) := \frac{1}{L_n r_n^\alpha} \mathbf{w}_n(x_n +r_n x) \quad \text{and} \quad H_n(x) := \frac{1}{L_n r_n^\alpha} h_n(x_n +r_n x).
\end{equation}
The functions $(\mathbf{W}_n, H_n)$ are defined on the sets $\Omega_n := \frac{1}{r_n}(\Omega-x_n)$. Since the functions in the sequence $(\mathbf{w}_n, h_n)$ are uniformly bounded, we see that necessarily $r_n \to 0^+$ as $n \to +\infty$. As a result, depending on the behavior of the sequences $(x_n, y_n)$, we have that $\Omega_n \to \Omega_\infty$ \utss{}, where $\Omega_\infty$ is either the entire space $\R^n$ or a half space. Here, when we write $\Omega_n \to \Omega_\infty$ we mean that:
\begin{itemize}
	\item the sets $\Omega_n$ are uniformly regular in $n$;
	\item for any compact set $K \subset \Omega_\infty$ (or $K \subset \R^n \setminus \overline{\Omega_\infty}$) there exists $\bar n$ such that $K \subset \Omega_n$  ($K \subset \R^n \setminus \overline{\Omega_n }$ respectively) for any $n \geq \bar n$.
\end{itemize}
Observe that by definition, for any $x, y \in \overline{\Omega_n}$
\begin{equation}\label{eqn unif Holder bu}
	\max_{i=1,\dots,N_n} |W_{i,n}(x)-W_{i,n}(y)| + |H_{n}(x)-H_{n}(y)| \leq C'' |x-y|^\alpha
\end{equation}
for a constant $C''>0$ that does not depend on $n$.

We derive the equations satisfied by the blow-up sequence $(\mathbf{W}_n, H_n)$ by scaling \eqref{eqn subsys renorm} accordingly. We find 
\begin{equation}\label{eqn bu}
	\begin{cases}
		-d_i \Delta W_{i,n} = \left( \aa_{i,n} - M_n \sum_{j\neq i} a_{ij} W_{j,n} \right)W_{i,n} &\text{ in $\Omega_n$}\\
		-\Delta \left(\sum_{i=1}^{N_n} d_{i} W_{i,n}(x)\right) = \sum_{i=1}^{N_n} \aa_{i,n} W_{i,n}(x) - M_n \sum_{i=1}^{M_n} W_{i,n} \sum_{ j \neq i}a_{ij} W_{j,n} &\text{ in $\Omega_n$}\\
		\partial_\nu W_{i,n} = \partial_\nu H_n = 0 &\text{ on $\partial \Omega_n$.}
	\end{cases}
\end{equation}
Here we have defined
\begin{equation}\label{eqn a and M}
  \aa_{i,n} = (-\omega_i + k_i u_n(x_n +r_n \cdot )) r_n^2 \quad \text{and} \quad M_n = \beta_n L_n r_n^{2+\alpha}.
\end{equation}
The uniform $L^\infty$ bound of $u_n$ implies that the sequence $\aa_{i,n}$ converges uniformly towards 0,
\[
  \|\aa_{i,n}\|_{L^\infty(\Omega_n)} \leq \delta^{-3} r_n^2 \to 0.
\]
Moreover, by definition we have
\[
  \left\|\sum_{i=1}^{N_n}\aa_{i,n}  W_{i,n} \right\|_{L^\infty(\Omega_n)} \leq \delta^{-3} \left\|\sum_{i=1}^{N_n}r_n^2  \frac{1}{L_nr_n^{\alpha}} w_{i,n} \right\|_{L^\infty(\Omega)}  \leq C(\delta) \frac{r_n^{2-\alpha}}{L_n} \to 0
\]
However, we have no information a priori on the possible behavior of the sequence of positive numbers $M_n$.

We now analyze \textbf{Case W)} and \textbf{Case H)} separately.

\noindent\textbf{Case W)}
First we assume that, up to a relabelling, the function in $\mathbf{w}_n$ are ordered decreasingly with respect to their H\"older seminorms. Thus we have 
\[
	\max_{i=1,\dots,N_n} \|w_{i,n}\|_{C^{0,\alpha}(\overline{\Omega})} = \|w_{1,n}\|_{C^{0,\alpha}(\overline{\Omega})}
\]
In this case, there exists a sequence $z_n \in \partial B_1 \cap \overline{\Omega_n}$ such that
\begin{equation}\label{eqn unit osc W}
	1 =  {|W_{1,n}(0)-W_{1,n}(z_n)|}= \max_{i=1,\dots,N_n} \frac{|W_{i,n}(x)-W_{i,n}(y)|}{|x -y|^{\alpha}} \qquad \text{for all $x \neq y \in \overline{\Omega_n}$}.
\end{equation}
In particular, the functions in $(\mathbf{W}_n, H_n)$ have uniformly bounded H\"older seminorm. We wish to show that they are also bounded in $x = 0$. This will imply local uniform convergence to a vector (of possibly infinitely many components) $(\mathbf{\bar W}, \bar H)$, as we show in Lemma \ref{lem lim bu WH}.

The next result is a simple adaptation of \cite[Lemma 6.9]{TVZ1}.

\begin{lemma}\label{lem estim via energy}
For any fixed $R > 0$ there exists $C = C(R) > 0$ such that for any $n \in \N$ and $i = 1, \dots, N_n$ we have
\[
    M_n \int_{\partial B_R \cap \Omega_n} W_{i,n}^2 \sum_{j \neq i} a_{ij} W_{j,n} \leq C(R) (1 + W_{i,n}(0)).
\]
\end{lemma}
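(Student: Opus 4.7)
The plan is to test the first equation in \eqref{eqn bu} against $W_{i,n}$ times a suitably chosen radial cut-off, and combine the resulting integral identity with the uniform H\"older control \eqref{eqn unif Holder bu}. Concretely, I would fix a smooth radial $\eta\in C_c^\infty(B_{2R})$ with $\eta\equiv 1$ on $\overline{B_R}$ and $|\nabla\eta|\le C/R$. Multiplying the $W_{i,n}$-equation by $W_{i,n}\eta^2$, integrating by parts on $\Omega_n$ (the Neumann condition $\partial_\nu W_{i,n}=0$ on $\partial\Omega_n$ eliminates any boundary contribution there), and using Young's inequality to absorb the cross term $2d_i\int\eta W_{i,n}\nabla W_{i,n}\cdot\nabla\eta$, I obtain
\[
\frac{d_i}{2}\int|\nabla W_{i,n}|^2\eta^2 + M_n\int W_{i,n}^2\eta^2\sum_{j\ne i}a_{ij}W_{j,n} \le C\int W_{i,n}^2|\nabla\eta|^2 + \int\aa_{i,n}W_{i,n}^2\eta^2.
\]
By \eqref{eqn unif Holder bu}, $W_{i,n}(x)\le W_{i,n}(0)+C''|x|^\alpha$ on $B_{2R}$, and by \eqref{eqn a and M}, $\|\aa_{i,n}\|_{L^\infty(\Omega_n)}\to 0$; hence the right-hand side is bounded by $C(R)(1+W_{i,n}(0))^2$. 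This already yields a volume version of the claimed estimate (with an a priori quadratic dependence on $W_{i,n}(0)$) together with the auxiliary gradient bound $\int_{B_{2R}}|\nabla W_{i,n}|^2\eta^2 \le C(R)(1+W_{i,n}(0))^2$.

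Next, to transfer the volume estimate to a surface estimate on $\partial B_R$, I would repeat the computation with $\eta^2$ replaced by a radial profile concentrated in a thin annulus about $\partial B_R$, so that it approximates a surface Dirac mass; equivalently, integrate the analogous identity written for each radius $\rho$ in a neighbourhood of $R$, select a favourable $\rho$ by the mean-value theorem, and pass back to $\rho=R$ by continuity, using the uniform H\"older control on $W_{i,n}$ and on $H_n=\sum_j d_j W_{j,n}$ to keep the constants independent of $n$. Care is needed when $\partial B_R$ intersects $\partial\Omega_n$, but there the uniform regularity of $\Omega_n$ (inherited from $\Omega$ after rescaling) suffices.

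The main obstacle I expect is sharpening the dependence on $W_{i,n}(0)$ from quadratic to the claimed linear bound. The natural tool is the pointwise H\"older splitting $W_{i,n}^2(x)\le (W_{i,n}(0)+C''R^\alpha)W_{i,n}(x)$ on $B_{2R}$, which extracts a factor of order $(1+W_{i,n}(0))$ and reduces the problem to controlling $M_n\int W_{i,n}\sum_{j\ne i}a_{ij}W_{j,n}$ uniformly in $n$ and $W_{i,n}(0)$. Testing the equation against $\eta^2$ (without the $W_{i,n}$ factor) gives the identity $M_n\int W_{i,n}\eta^2\sum_{j\ne i}a_{ij}W_{j,n} = \int\aa_{i,n}W_{i,n}\eta^2 - 2d_i\int\eta\,\nabla W_{i,n}\cdot\nabla\eta$, and Cauchy--Schwarz together with the gradient bound produces a linear-in-$(1+W_{i,n}(0))$ estimate. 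Combining the splitting and this $L^1$-bound naively would leave a quadratic $(1+W_{i,n}(0))^2$ on the right-hand side; obtaining the advertised linear estimate requires exploiting the PDE itself---large $W_{i,n}(0)$ forces $\Delta W_{i,n}$ to be small near the origin and therefore $M_n\sum_{j\ne i}a_{ij}W_{j,n}\approx\aa_{i,n}=o(1)$ in the region where $W_{i,n}$ is close to $W_{i,n}(0)$---which is precisely the quantitative balancing that the adaptation of \cite[Lemma 6.9]{TVZ1} cited by the authors provides.
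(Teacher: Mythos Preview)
Your Caccioppoli test against $W_{i,n}\eta^2$ is a natural opening, but as you yourself recognise it only yields a bound quadratic in $1+W_{i,n}(0)$, and none of the subsequent manoeuvres closes the gap. Combining the splitting $W_{i,n}^2\le (W_{i,n}(0)+CR^\alpha)W_{i,n}$ with the $L^1$ estimate obtained by testing against $\eta^2$ still leaves a product of two linear factors, and your last paragraph is a description of what one would like rather than an argument: the claim that ``large $W_{i,n}(0)$ forces $\Delta W_{i,n}$ to be small near the origin'' has no justification, since uniform H\"older control on $W_{i,n}$ gives no information whatsoever on its Laplacian. As written, the proposal does not establish the linear bound.

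The paper's device (adapting \cite[Lemma~6.9]{TVZ1}) bypasses the difficulty by a different mechanism. Set $E(r)=\int_{\partial B_1}W_{i,n}^2(rx)\,d\sigma(x)$ and let $D(r)$ be the (suitably normalised) quantity obtained by testing the equation against $W_{i,n}$ on $B_r$, so that $E'(r)=\tfrac{2}{r}D(r)$ and $D(r)$ contains the nonnegative gradient and competition terms plus an $\aa_{i,n}$ error that is harmless because $\|\aa_{i,n}W_{i,n}\|_{L^\infty}\to 0$. Integrating from $R$ to $2R$ gives
\[
E(2R)-E(R)=\int_R^{2R}\frac{2}{r}\,D(r)\,dr,
\]
whose right-hand side controls the competition integral from below. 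The left-hand side, however, is a \emph{difference of squares}:
\[
E(2R)-E(R)=\int_{\partial B_1}\bigl(W_{i,n}(2Rx)-W_{i,n}(Rx)\bigr)\bigl(W_{i,n}(2Rx)+W_{i,n}(Rx)\bigr)\,d\sigma,
\]
and the first factor is $O(R^\alpha)$ by \eqref{eqn unif Holder bu}, \emph{independently of $W_{i,n}(0)$}, while only the second is $O(1+W_{i,n}(0))$. This delivers the linear bound directly. The idea you are missing, then, is to compare boundary averages at two radii rather than to bound the energy on a single ball: the subtraction kills one of the two factors of $W_{i,n}(0)$, a cancellation that a Caccioppoli inequality with a fixed cut-off cannot see.
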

\begin{proof}
In system \eqref{eqn bu}, we multiply the equation in $W_{i,n}$ by $W_{i,n}$ itself. Integrating by parts on the ball $B_R$, we find
\[
  D(R) := \frac{1}{R^{N-2} } \left( \int_{B_R} d_i |\nabla W_{i,n}|^2 + \left( -\aa_{i,n} + M_n \sum_{j \neq i}a_{ij} W_{j,n}\right) W_{i,n}^2 \right) = \frac{1}{r^{N-2}} \int_{\partial B_R} W_{i,n} \partial_{\nu} W_{i,n}
\] 
We now let
\[
  E(R) := \frac{1}{r^{N-1}} \int_{\partial B_R} W_{i,n}^2.
\]
Since the functions involved are regular, we have
\[
    E(2R) - E(R) =  \int_{R}^{2R} E'(r) = \int_{R}^{2R} \frac{2}{r} D(r).
\]
The statement will follow once we have suitably estimated the two sides of the previous identity. We start from the left hand side. By the uniform bounds on the H\"older seminorm of $\mathbf{W}_n$, we have
\begin{multline*}
    E(2R) - E(R) = \int_{\partial B} \left( W_{i,n}^2(2Rx) - W_{i,n}^2(Rx) \right) = \\
    \int_{\partial B} \left( W_{i,n}(2Rx) - W_{i,n}(Rx) \right)  \left( W_{i,n}(2Rx) + W_{i,n}(Rx) \right) 
    \leq C(R)(|W_{i,n}(0)| +1).
\end{multline*}
For what concerns the right hand side, we have directly
\[\begin{split}
    \int_{R}^{2R} \frac{2}{r} D(r) &\geq \min_{s \in [R,2R]} D(s) \geq \frac{1}{R^{N-2}} \left( \frac{M_n}{2^{N-1}} \int_{\partial B_R} W_{i,n}^2 \sum_{j \neq i}a_{ij} W_{j,n} - \int_{\partial B_{2R}}  |\aa_{i,n}| W_{i,n} \right)\\
    &\geq C \left(M_n \int_{\partial B_R} W_{i,n}^2 \sum_{j \neq i}a_{ij} W_{j,n}  - \|\aa_{i,n}\|_{L^{\infty}} (|W_{i,n}(0)| +1)\right).\qedhere
\end{split}
\]
\end{proof}

The uniform estimate in Lemma \ref{lem estim via energy} is key in order to prove the local boundedness of the blow-up sequence $(\mathbf{W}_n, H_n)$.
 
\begin{lemma}\label{Hn bounded in W}
The sequence of functions $(\mathbf{W}_n, H_n)$ is bounded locally uniformly.
\end{lemma}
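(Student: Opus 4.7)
The plan is to argue by contradiction. Because in \textbf{Case W)} we have the uniform Hölder seminorm bounds $[W_{i,n}]_{C^{0,\alpha}(\overline{\Omega_n})} \leq 1$ for every $i$ (as recorded in \eqref{eqn unit osc W}) and $[H_n]_{C^{0,\alpha}(\overline{\Omega_n})} \leq C'$, the sequences $(\mathbf{W}_n)$ and $(H_n)$ are equicontinuous on bounded sets, so local uniform boundedness is equivalent to a uniform pointwise bound at the origin. The normalization $\sum_i w_{i,n} \leq 1$ rescales to $\sum_i W_{i,n}(x) \leq 1/(L_n r_n^\alpha)$, and, combined with $H_n \leq \delta^{-1} \sum_i W_{i,n}$, this reduces the lemma to proving a lower bound $L_n r_n^\alpha \geq c > 0$ along the sequence.

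Suppose, for contradiction, that $L_n r_n^\alpha \to 0$. The unit oscillation in \eqref{eqn unit osc W}, combined with the pointwise bound above, forces $W_{1,n}(0) + W_{1,n}(z_n) \geq 1$, so after possibly swapping $0$ and $z_n$ one may assume $W_{1,n}(0) \geq 1/2$. I would then split the discussion according to whether $W_{1,n}(0)$ stays bounded or diverges to $+\infty$. In the diverging sub-case, the Hölder bound $[W_{1,n}]_{C^{0,\alpha}} \leq 1$ gives $W_{1,n}(x) \geq W_{1,n}(0)/2$ on a fixed ball $B_r$, and inserting this into Lemma \ref{lem estim via energy} yields
\[
    M_n \int_{\partial B_R \cap \Omega_n} \sum_{j \neq 1} W_{j,n}\,d\sigma \leq \frac{4 C(R)(1 + W_{1,n}(0))}{\delta\, W_{1,n}(0)^2} \to 0.
\]
The competition density in the equation for $W_{1,n}$ thus vanishes in $L^1_{\mathrm{loc}}$, while the renormalization $\tilde W_{1,n} := W_{1,n}/W_{1,n}(0)$ tends to $1$ uniformly on compact sets, its Hölder seminorm going to zero. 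Passing to the limit in $V_{1,n} := W_{1,n} - W_{1,n}(0)$ via Arzel\`a-Ascoli extracts a harmonic profile on $\R^n$ or a half-space with bounded Hölder seminorm, vanishing at the origin and with oscillation $1$ on $\partial B_1$; a Liouville-type rigidity argument rules out such a limit, producing a contradiction.

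The main obstacle is the complementary sub-case, in which $W_{1,n}(0)$ stays bounded but $H_n(0) \to +\infty$: the growth of $H_n$ must then be carried by a diverging number of small components, and applying Lemma \ref{lem estim via energy} index by index introduces an uncontrolled factor $N_n$ on the right-hand side when summed. The strategy I would pursue is to work directly with the sum equation for $H_n$ from \eqref{eqn bu},
\[
    -\Delta H_n = \sum_i \aa_{i,n} W_{i,n} - Q_n, \qquad Q_n := 2 M_n \sum_{i<j} a_{ij} W_{i,n} W_{j,n} \geq 0,
\]
using that the first term is $o(1)\, H_n$ uniformly, so that $H_n$ is approximately subharmonic. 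Combining this with the Hölder equicontinuity of $H_n$ and a weighted version of the energy identity that controls $Q_n$ in $L^1_{\mathrm{loc}}$ independently of $N_n$ should furnish an a priori $L^\infty$ bound of $H_n$ on compacts, contradicting $H_n(0) \to \infty$. Delivering this $N_n$-independent control of the competition density $Q_n$ is the technical crux of the proof.
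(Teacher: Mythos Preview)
Your case split matches the paper's, and your treatment of the sub-case $W_{1,n}(0)\to+\infty$ is essentially the paper's Case~2. The problem is the other sub-case, where every $W_{i,n}(0)$ stays bounded but $H_n(0)\to+\infty$: you correctly flag it as the crux, but you leave it unresolved, proposing only to control the full competition density $Q_n$ through the sum equation. That is not how the paper closes the gap, and your sketch does not yet contain the decisive idea.

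The point you are missing is that no summation over indices is needed. Apply Lemma~\ref{lem estim via energy} to the \emph{single} index $i=1$. The competition factor appearing there is $\sum_{j\neq 1} a_{1j} W_{j,n}$, which by the uniform bounds on $a_{1j}$ and $d_j$ satisfies
\[
    \sum_{j\neq 1} a_{1j} W_{j,n}(x) \;\geq\; c\,\bigl(H_n(x) - d_1 W_{1,n}(x)\bigr) \;\geq\; c\,\bigl(H_n(0) - C'' R^\alpha - C\bigr) \;\geq\; \tfrac{c}{2}\,H_n(0)
\]
on $B_R\cap\Omega_n$ for $n$ large, since $H_n$ has bounded H\"older seminorm and $W_{1,n}$ is bounded. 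Lemma~\ref{lem estim via energy} then yields
\[
    M_n\, H_n(0)\int_{\partial B_R\cap\Omega_n} W_{1,n}^2 \;\leq\; C(R)\,\bigl(1+W_{1,n}(0)\bigr)\;\leq\; C'(R).
\]
Because $W_{1,n}$ is bounded with unit oscillation at scale $1$, it subconverges to a nonzero limit and $\int_{\partial B_R} W_{1,n}^2$ is bounded below for a suitable $R$. Hence $M_n H_n(0)\leq C$, so $M_n\to 0$ and $M_n H_n\to\Lambda$ locally uniformly for some constant $\Lambda\geq 0$. Passing to the limit in the equation for $W_{1,n}$ produces a non-constant, non-negative, globally $\alpha$-H\"older solution of $-\Delta \bar W = -\Lambda \bar W$ on $\R^n$ (after even reflection if needed), which is impossible.

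So the ``uncontrolled factor $N_n$'' you worry about never enters: the competition term in the estimate for one component already sees the whole sum $H_n$. Your proposed route through $Q_n$ and the sum equation is unnecessary here (though a version of it is what the paper uses later in Case~H).
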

\begin{proof}
We start by observing that it is sufficient to prove that $H_n$ is bounded in $x = 0$. Indeed, we have
\[
  H_n(0) = \sum_{i=1}^{N_n} d_i W_{i,n}(0) \geq \delta \sum_{i=1}^{N_n} W_{i,n}(0)
\]
and the functions in $(\mathbf{W}_n, H_n)$ satisfy by construction the uniform estimate \eqref{eqn unif Holder bu}. Let us then assume, by contradiction, that $H_n(0) \to + \infty$. We distinguish two cases.

\noindent\textbf{Case 1)} $H_n(0) \to +\infty$ but there exists $C > 0$ such that $W_{i,n}(0) \leq C$ for all $n \in \N$ and $i = 1, \dots, N_n$.  This evidently implies that $N_n\to +\infty$. Let $R> 0$ be a constant that will be chosen later in the proof. Observe that, by assumption, we have
\[
	\frac{\delta}{2} H_n(0) \leq \sum_{j \neq 1} a_{ij} W_{j,n}(0) \leq \frac{1}{\delta} H_n(0) \qquad \text{for $n$ sufficiently large.}
\]
By Lemma \ref{lem estim via energy} and the uniform bounds in \eqref{eqn unif Holder bu}, we find
\begin{equation}\label{eqn estim 4}
	\frac{\delta}{4}M_n H_n(0) \int_{\partial B_R\cap \Omega_n} W_{1,n}^2 \leq \frac{\delta}{2} M_n (H_n(0) - C R^\alpha) \int_{\partial B_R\cap \Omega_n} W_{1,n}^2 \leq C(R) (1 + W_{1,n}(0)).
\end{equation}
Since, in the present case, $W_{1,n}$ is bounded and has bounded $C^{0,\alpha}$ seminorm, by \eqref{eqn unit osc W} we find that it converges \utss\ to a non zero globally H\"older continuous function. As a result, there exist $\bar R > 0$ and $\bar C > 0$ such that
\[
	\bar C < \int_{\partial B_{\bar R} \cap \Omega_n} W_{1,n}^2 \leq 2 \bar C
\]
for $n$ sufficiently large. Plugging this information back into \eqref{eqn estim 4}, we find that 
\[
	M_n H_n(0) \leq C \qquad \text{for all $n \in \N$}.
\]
It follows that $M_n \to 0$. By the H\"older uniform bounds \eqref{eqn unif Holder bu}, there exists a constant $\Lambda \geq 0$ such that for any compact set $K \subset \R^n$, \utss, $M_n H_n \to \Lambda$ uniformly in $K$. From \eqref{eqn bu}, we consider the equation satisfied  by $W_{1,n}$. By the current assumptions, the sequence $W_{1,n}$ converges locally in $C^{0,\alpha'}$ for any $\alpha' < \alpha$ to a function $\bar W$ that is defined on $\Omega_\infty$, which is non negative, non constant, globally $\alpha$-H\"older continuous, and a solution to 
\[
	\begin{cases}
		-\Delta \bar W = - \Lambda \bar W &\text{in $\Omega_\infty$}\\
		\partial_\nu \bar W = 0  &\text{on $\partial \Omega_\infty$}.
	\end{cases}
\]
If $\partial \Omega_\infty$ is not empty, we can moreover evenly extend $\bar W$ to the whole $\R^n$ to a thus positive, non constant global solution of $-\Delta u = - \Lambda u$. Either way, we find a contradiction.

\noindent\textbf{Case 2)} $H_n(0) \to +\infty$ and there exists $i$ such that, up to a subsequence, $W_{i,n}(0) \to +\infty$. For any $n$, let $i_n \in \{1, \dots, N_n\}$ be the index such that
\[
	W_{i_n,n}(0) = \max_{i = 1, \dots, N_n} W_{i,n}(0).
\]
We can argue similarly as in the first case, and find from Lemma \ref{lem estim via energy} the estimate
\[
	M_n H_n(0) W_{i_n,n}(0)^2 \leq C'(R) M_n \int_{\partial B_R} H_n W_{i_n,n}^2 \leq C''(R) (1 + W_{i_n,n}(0)).
\]
From this we conclude that there exists $C > 0$ such that
\[
	M_n H_n(0) \max_{i = 1, \dots, N_n} W_{i,n}(0) \leq C.
\]
Once again, this implies that $M_n \to 0$. By exploiting the uniform H\"older bounds \eqref{eqn unif Holder bu}, we find that there exists a constant $\Lambda \geq 0$ such that for any compact set $K \subset \R^n$, \utss\, $M_n H_n W_{1,n} \to \Lambda$ uniformly in $K$. We consider the sequence of function $w_n := W_{1,n} - W_{1,n}(0)$. By virtue of the same reasoning as before, we can use its local H\"older limit to build a non constant global solution of $-\Delta u = - \Lambda$. We find again a contradiction.
\end{proof}

As a result of Lemma \ref{Hn bounded in W}, the sequence $(\mathbf{W}_n, H_n)$ is uniformly bounded in $x=0$ and by \eqref{eqn unif Holder bu} it has uniformly bounded $\alpha$-H\"older seminorm. We now show that there exists a vector $(\mathbf{\bar W}, \bar H)$ of possibly infinitely many components, that is the limit of a subsequence of $(\mathbf{W}_n, H_n)$.

\begin{lemma}\label{lem lim bu WH}
There exist a subsequence of $(\mathbf{W}_n, H_n)$, a vector $(\mathbf{\bar W}, \bar H)$ of possibly infinitely many components and a constant $C >0$ such that
\begin{itemize}
  \item $\bar W_i \geq 0$ for all $i = 1, \dots$ and $\bar H \geq 0$;
  \item $\max_{i = 1,\dots} \bar W_i(0) + \bar H(0) \leq C$;
  \item $\max_{i = 1,\dots} |\bar W_i(x) - \bar W_i(y)| + |\bar H(x) - \bar H(y)| \leq C|x-y|^\alpha$ for all $x,y \in \Omega_\infty$;
  \item for all compact set $K \subset \Omega_\infty$, $H_n \to \bar H$ and, for all $i$, $W_{i,n} \to \bar W_i$ in $C^{0,\alpha'}(K)$ for all $\alpha' \in (0,\alpha)$;
  \item lastly, there exists $z \in \partial B_1 \cap \overline{\Omega_\infty}$ such that $|\bar W_1(0) - \bar W_1(x)| = 1$. 
\end{itemize}
\end{lemma}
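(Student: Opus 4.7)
The plan is to combine the local uniform $L^\infty$ bound furnished by Lemma \ref{Hn bounded in W} with the uniform $\alpha$-H\"older seminorm bound \eqref{eqn unif Holder bu} and extract a limit by Arzel\`a--Ascoli. The only genuine subtlety is that the number $N_n$ of components of $\mathbf{W}_n$ is not a priori bounded, so a single application of Arzel\`a--Ascoli to the whole vector is not available; I would therefore proceed by a diagonal argument indexed jointly by the components and by an exhausting sequence of compact sets.

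Concretely, I would fix an exhausting sequence $K_1 \subset K_2 \subset \dots$ of compact subsets of $\overline{\Omega_\infty}$ with $\bigcup_m K_m = \overline{\Omega_\infty}$. The hypothesis $\Omega_n \to \Omega_\infty$ guarantees that $K_m \subset \overline{\Omega_n}$ for all $n$ sufficiently large. By the uniform bounds just invoked, for each fixed index $i \in \N$ and each $m$, the sequence $\{W_{i,n}\}_n$ (set equal to $0$ for the finitely many $n$, if any, with $N_n < i$) is precompact in $C^{0,\alpha'}(K_m)$ for every $\alpha' \in (0,\alpha)$, and similarly for $\{H_n\}_n$. A standard Cantor diagonal extraction then yields a single subsequence along which $H_n \to \bar H$ and $W_{i,n} \to \bar W_i$ for every $i$, in $C^{0,\alpha'}(K_m)$ for all $m$ and all $\alpha' \in (0,\alpha)$.

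The first four properties listed in the statement then follow at once: nonnegativity and the H\"older estimate pass to the limit by local uniform convergence, and the pointwise bound at the origin is the content of Lemma \ref{Hn bounded in W} transported to the limit. For the last assertion, the sequence $z_n \in \partial B_1 \cap \overline{\Omega_n}$ produced by the normalization \eqref{eqn unit osc W} is bounded, so up to a further subsequence $z_n \to z \in \partial B_1 \cap \overline{\Omega_\infty}$. Since $W_{1,n} \to \bar W_1$ uniformly on a compact neighborhood of $z$ and the sequence is equi-H\"older, we may pass to the limit in the identity $|W_{1,n}(0) - W_{1,n}(z_n)| = 1$ to conclude that $|\bar W_1(0) - \bar W_1(z)| = 1$.

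I expect the main obstacle to be essentially organizational rather than analytic: managing the diagonal extraction cleanly when the index set for the components grows with $n$, and handling the boundary case (where $\Omega_\infty$ may be a half-space and the point $z$ may lie on $\partial \Omega_\infty$, so that one must use equi-continuity up to the boundary rather than merely in the interior). Neither point is deep, but both have to be carried out carefully to ensure that the limit object $(\mathbf{\bar W}, \bar H)$ is well defined and that each inequality is genuinely preserved in passing to the limit.
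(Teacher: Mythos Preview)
Your proposal is correct and follows precisely the route the paper takes: the paper's own proof consists of the single sentence ``The proof follows by the classical Ascoli-Arzel\`a compactness criterion and a diagonal extraction argument,'' and what you have written is exactly a careful unpacking of that. Your additional remarks on handling the unbounded number of components and the possible boundary location of $z$ are appropriate elaborations of points the paper leaves implicit.
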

\begin{proof}
The proof follows by the classical Ascoli-Arzel\`a compactness criterion and a diagonal extraction argument.
\end{proof}

We can now use the existence of a limit function $(\mathbf{\bar W}, \bar H)$ in order to pass to the limit in \eqref{eqn bu}. We differentiate among three possible behaviors of the sequence $M_n$, which are addressed by the three following lemmas. 
\begin{lemma}
The sequence $M_n$ is bounded away from 0. That is, there $C > 0$ such that $M_n > C > 0$.
\end{lemma}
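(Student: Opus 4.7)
The plan is to argue by contradiction: suppose that, after passing to a further subsequence, $M_n \to 0$. I will then pass to the limit in the equation for $W_{1,n}$ in \eqref{eqn bu} and produce a non-constant harmonic function on $\Omega_\infty$ with sublinear growth, contradicting Liouville.

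By Lemma \ref{Hn bounded in W} and the uniform H\"older bound \eqref{eqn unif Holder bu}, on any compact set $K \subset \Omega_\infty$ both $H_n$ and each $W_{i,n}$ are bounded independently of $n$, and $W_{1,n} \to \bar W_1$ in $C^{0,\alpha'}_{\loc}(\Omega_\infty)$ by Lemma \ref{lem lim bu WH}. Since $\sum_{j \neq 1} a_{1j} W_{j,n} \leq \delta^{-1} H_n$, the right-hand side of the equation for $W_{1,n}$ satisfies on $K$
\[
	\left\| \left( \aa_{1,n} - M_n \sum_{j \neq 1} a_{1j} W_{j,n}\right) W_{1,n} \right\|_{L^\infty(K)} \leq \left( \|\aa_{1,n}\|_{L^\infty} + \delta^{-1} M_n \|H_n\|_{L^\infty(K)} \right) \|W_{1,n}\|_{L^\infty(K)},
\]
which tends to $0$, because $\aa_{1,n} \to 0$ uniformly (already observed after \eqref{eqn a and M}) and $M_n \to 0$ by assumption. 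Standard elliptic regularity (Calder\'on--Zygmund plus Sobolev embedding, then Schauder) upgrades the convergence $W_{1,n} \to \bar W_1$ to $C^{2,\alpha'}_{\loc}(\Omega_\infty)$, so passing to the limit in \eqref{eqn bu} yields
\[
	\begin{cases}
		-d_1 \Delta \bar W_1 = 0 &\text{in } \Omega_\infty,\\
		\partial_\nu \bar W_1 = 0 &\text{on } \partial \Omega_\infty,
	\end{cases}
\]
where the boundary condition is relevant only if $\Omega_\infty$ is a half-space.

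If $\Omega_\infty$ is a half-space, I extend $\bar W_1$ by even reflection across $\partial \Omega_\infty$ to a harmonic function on all of $\R^n$; otherwise $\bar W_1$ is already harmonic on $\R^n$. In either case the H\"older estimate in Lemma \ref{lem lim bu WH} provides the sublinear growth bound $|\bar W_1(x)| \leq \bar W_1(0) + C|x|^\alpha$ with $\alpha \in (0,1)$, and the Liouville theorem for harmonic functions with sublinear growth forces $\bar W_1$ to be constant. This contradicts the last property in Lemma \ref{lem lim bu WH}, which guarantees the existence of a point $z \in \partial B_1 \cap \overline{\Omega_\infty}$ with $|\bar W_1(0) - \bar W_1(z)| = 1$. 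I expect the only mildly delicate step to be the clean handling of the two geometric possibilities for $\Omega_\infty$ (entire space versus half-space), since the passage to the limit and the Liouville application themselves are routine once the locally uniform bounds of Lemmas \ref{Hn bounded in W} and \ref{lem lim bu WH} are in hand.
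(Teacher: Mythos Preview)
Your proposal is correct and follows exactly the paper's approach: assume by contradiction $M_n\to 0$, pass to the limit in the equation for $W_{1,n}$ to obtain a nonconstant, globally $\alpha$-H\"older harmonic function on $\Omega_\infty$ (evenly reflected to $\R^n$ if $\Omega_\infty$ is a half-space), and invoke Liouville. Your write-up is simply a more detailed rendering of the same argument; the only cosmetic slip is the bound $\sum_{j\neq 1} a_{1j} W_{j,n} \leq \delta^{-1} H_n$, which should carry a $\delta^{-2}$ (since both $a_{1j}\leq \delta^{-1}$ and $W_{j,n}\leq \delta^{-1} d_j W_{j,n}$), but this does not affect the argument.
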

\begin{proof}
Assume that, \utss\, if holds $M_n \to 0$. In this case, we can pass directly to the limit in the equation in \eqref{eqn bu} satisfied by $\bar W_{1}$, and find
\[
	\begin{cases}
		-\Delta \bar W_1 = 0 &\text{in $\Omega_\infty$}\\
		\partial_\nu \bar W_1 = 0  &\text{on $\partial \Omega_\infty$}.
	\end{cases}
\]
As before, upon an eventual even extension, we can build a positive harmonic function, which is globally $\alpha$-Holder continuous and non constant. A contradiction.
\end{proof}

In order to prove the next two results, which will enable us to reach the final contradiction in \textbf{Case W)}, we need to introduce an auxiliary sequence of functions. We let
\[
  H_{1,n} := \sum_{i=2}^{N_n} d_i W_{i,n} = H_n - d_1 W_{1,n}.
\]
It is easy to show that this sequence converges to its limit $\bar H_1 = \bar H - d_1 \bar W_{1} \geq 0$. We observe that the pairs $(W_{1,n}, H_{1,n})$ are solutions of the system of inequalities
\begin{equation}\label{eqn W1h1}
	\begin{cases}
		-\Delta (d_1 W_{1,n}) \leq \left( \aa_{1,n} - M_n \delta^2 H_{1,n} \right)W_{1,n}\\
		-\Delta H_{1,n} \leq \left(\delta^{-4} r_n^2 - M_n \delta^2 W_{1,n}\right) H_{1,n} &\text{in $\Omega_n$}\\
		-\Delta (d_1 W_{1,n} - H_{1,n}) \geq \aa_{1,n} W_{1,n} - r_n^2 \delta^{-4} H_{1,n}\\
		\partial_\nu W_{1,n} = \partial_\nu H_{1,n} = 0 &\text{on $\partial \Omega_n$.}
	\end{cases}
\end{equation}

\begin{lemma}
The sequence $M_n$ is unbounded. That is, $M_n \to +\infty$.
\end{lemma}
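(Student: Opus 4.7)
The previous lemma rules out $M_n \to 0$, so my plan is to exclude the remaining case where $M_n$ stays bounded: assume, along a subsequence, that $M_n \to M$ with $0 < M < +\infty$. The strategy is to pass to the limit in the sub/super-solution system \eqref{eqn W1h1} for the pair $(W_{1,n}, H_{1,n})$, apply a Liouville-type theorem to the limit $(\bar W_1, \bar H_1)$, and thereby contradict the forced non-constancy of $\bar W_1$ coming from \eqref{eqn unit osc W}.

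First I would pass to the limit in \eqref{eqn W1h1}. By Lemma \ref{lem lim bu WH} we have $W_{1,n} \to \bar W_1$ and $H_n \to \bar H$ locally in $C^{0,\alpha'}$ for every $\alpha' < \alpha$, so $H_{1,n} = H_n - d_1 W_{1,n}$ converges likewise to $\bar H_1 := \bar H - d_1 \bar W_1 \geq 0$. Since $\|\aa_{1,n}\|_{L^\infty} \leq \delta^{-3} r_n^2 \to 0$ and $r_n^2 \delta^{-4} \to 0$, while $M_n \to M$, passing to the limit in \eqref{eqn W1h1} yields, in the sense of distributions,
\[
-d_1 \Delta \bar W_1 \leq -M \delta^2 \bar W_1 \bar H_1,\quad -\Delta \bar H_1 \leq -M \delta^2 \bar W_1 \bar H_1,\quad -\Delta\bigl(d_1 \bar W_1 - \bar H_1\bigr) \geq 0
\]
in $\Omega_\infty$, with homogeneous Neumann condition on $\partial \Omega_\infty$ whenever that boundary is nonempty. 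When $\Omega_\infty$ is a half-space, the Neumann condition allows even reflection across the boundary, so one can assume $\Omega_\infty = \R^n$ without loss of generality.

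Next, the key step is a Liouville statement for the limit system. The pair $(\bar W_1, \bar H_1)$ consists of non-negative, globally $\alpha$-Hölder continuous functions on $\R^n$ satisfying the above competition-type inequalities with a \emph{finite, positive} coupling $M \delta^2$. In the spirit of the Liouville theorems underlying \cite{CaffKarLin, ContiTerraciniVerzini_AdvMat_2005}, I would apply the Alt--Caffarelli--Friedman monotonicity formula to the pair $(d_1 \bar W_1, \bar H_1)$, viewed through the positive and negative parts of $d_1 \bar W_1 - \bar H_1$, whose Laplacian is controlled by the third inequality. The global Hölder growth $|\bar W_1(x)| + |\bar H_1(x)| \leq C(1 + |x|^\alpha)$ with $\alpha < 1$ forces the ACF functional to vanish as $R \to \infty$, and its monotonicity then compels $\bar W_1 \bar H_1 \equiv 0$. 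The two pointwise sub-solution inequalities upgrade to $-d_1 \Delta \bar W_1 \leq 0$ and $-\Delta \bar H_1 \leq 0$ globally, and the sub-polynomial growth of order $\alpha < 1$ forces each of $\bar W_1$ and $\bar H_1$ to be a constant.

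This constancy contradicts $|\bar W_1(0) - \bar W_1(z)| = 1$ from Lemma \ref{lem lim bu WH}, giving the required contradiction. The main difficulty I anticipate is the Liouville step, since $\bar H_1$ is a limit of sums of potentially infinitely many competitors rather than a single species, so the standard two-species ACF-based Liouville must be adapted. Fortunately, the three inequalities in \eqref{eqn W1h1} — a sub-solution inequality for $d_1 W_{1,n}$, a sub-solution inequality for $H_{1,n}$, and a super-solution inequality for their difference — provide exactly the structure needed to run the ACF argument on the pair $(d_1 \bar W_1, \bar H_1)$ without resolving the internal structure of $\bar H_1$.
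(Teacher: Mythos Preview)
Your overall strategy --- assume $M_n \to M \in (0,\infty)$ along a subsequence, pass to the limit in \eqref{eqn W1h1}, and derive a contradiction from a Liouville-type result --- coincides with the paper's. The paper, however, does not attempt the Liouville step from scratch: it invokes \cite[Lemma~A.3]{STTZ}, which from the limit system
\[
-\Delta(d_1 \bar W_1) \leq -\delta^2 \bar H_1 \bar W_1,\qquad -\Delta \bar H_1 \leq -\delta^2 \bar W_1 \bar H_1,\qquad -\Delta(d_1 \bar W_1 - \bar H_1) \geq 0
\]
concludes directly that $\bar W_1 \equiv 0$ or $\bar H_1 \equiv 0$. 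Since $\bar W_1$ is non-constant one obtains $\bar H_1 \equiv 0$, whereupon the first and third inequalities together force $-\Delta \bar W_1 = 0$, and the classical Liouville theorem for \emph{harmonic} functions of sublinear growth yields the contradiction.

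Your ACF-based sketch of the Liouville step has two genuine gaps. First, applying ACF to the positive and negative parts of $f := d_1 \bar W_1 - \bar H_1$ does not work as written: from $-\Delta f \geq 0$ one has $\Delta f \leq 0$, so on $\{f>0\}$ the function $f^+ = f$ is \emph{superharmonic}, not subharmonic, and the ACF monotonicity hypothesis fails for that factor. The pair $(d_1 \bar W_1, \bar H_1)$ itself consists of subharmonic functions but does not have disjoint supports a priori, so the classical ACF formula does not apply to it either; one needs precisely the competition-adapted Liouville result that the paper cites. Second, the assertion that ``subharmonic with growth $O(|x|^\alpha)$, $\alpha<1$, forces constancy'' is false: $\log^+|x|$ in $\R^2$, or $(1-|x|^{2-n})^+$ in $\R^n$ with $n\geq 3$, are non-constant, nonnegative, globally subharmonic, and of sublinear (indeed bounded or logarithmic) growth. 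The passage to $\bar W_1$ constant must go through $\bar H_1 \equiv 0$ and the resulting \emph{harmonicity} of $\bar W_1$, as in the paper.
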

\begin{proof}
We argue again by contradiction. Without loss of generality, we can assume that, \utss\, $M_n \to 1$. Passing to the limit in \eqref{eqn W1h1} we find
\[
	\begin{cases}
		-\Delta (d_1 \bar W_{1}) \leq - \delta^2 \bar H_{1} \bar W_{1}\\
		-\Delta \bar H_{1} \leq - \delta^2 \bar W_{1} \bar H_{1} &\text{in $\Omega_\infty$}\\
		-\Delta (d_1 \bar W_{1} - \bar H_{1}) \geq 0\\
		\partial_\nu \bar W_{1} = \partial_\nu \bar H_1 = 0 &\text{on $\partial \Omega_\infty$.}
	\end{cases}
\]
As a result of \cite[Lemma A.3]{STTZ}, we find that necessarily $\bar W_1 \equiv 0$ or $\bar H_1 \equiv 0$. Since $\bar W_1$ is non constant, it must be that $\bar H_1 \equiv 0$. Thus, again by the previous system, we conclude that $\bar W_1$ is a non constant positive harmonic function. A contradiction.
\end{proof}

We exclude the last possibility, $M_n \to +\infty$, and thus conclude the proof of \textbf{Case W)}.
\begin{lemma}
It cannot be that $M_n \to +\infty$.
\end{lemma}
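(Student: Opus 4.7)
The plan is to argue by contradiction: assume that, up to striking out a sub-sequence, $M_n \to +\infty$. In this regime, the interaction terms in \eqref{eqn W1h1} become very stiff, so the limit pair $(\bar W_1, \bar H_1)$ should be \emph{segregated}, and the resulting segregated system should admit only constant $\bar W_1$ by a Liouville-type principle, contradicting the normalization $|\bar W_1(0)-\bar W_1(z)|=1$ from \eqref{eqn unit osc W}.

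First I would extract segregation. Testing the first line of \eqref{eqn W1h1} against a non-negative compactly supported $\varphi$ and integrating by parts gives
\[
\int d_1 \nabla W_{1,n}\cdot\nabla\varphi \;+\; M_n\delta^2\!\int H_{1,n}W_{1,n}\varphi \;\leq\; \int \aa_{1,n}W_{1,n}\varphi.
\]
The gradient term is uniformly bounded via a Caccioppoli estimate derived from \eqref{eqn bu}, and the right-hand side tends to $0$ because $\|\aa_{1,n}\|_\infty\to 0$. Since $M_n\to+\infty$, this forces $\int H_{1,n}W_{1,n}\varphi\to 0$, so by the local uniform convergence from Lemma \ref{lem lim bu WH} we conclude $\bar W_1\bar H_1\equiv 0$ on $\Omega_\infty$. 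Passing to the limit in the three differential inequalities of \eqref{eqn W1h1} (keeping only the signs of the interaction terms on their respective sides) then yields, on $\Omega_\infty$,
\[
-\Delta(d_1\bar W_1)\leq 0\ \text{on }\{\bar W_1>0\},\qquad -\Delta\bar H_1\leq 0\ \text{on }\{\bar H_1>0\},\qquad -\Delta(d_1\bar W_1-\bar H_1)\geq 0,
\]
together with $\bar W_1\bar H_1=0$ and Neumann data on $\partial\Omega_\infty$. In particular, both $d_1\bar W_1$ and $\bar H_1$ are non-negative continuous subharmonic functions on $\Omega_\infty$.

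Next I would split into two cases according to whether $\bar H_1$ is trivial. If $\bar H_1\equiv 0$, the third inequality says $\bar W_1$ is superharmonic, while the first says it is subharmonic, hence $\bar W_1$ is harmonic on $\Omega_\infty$; after an even extension across $\partial\Omega_\infty$ (using the Neumann condition) it becomes a globally $\alpha$-Hölder continuous, bounded, harmonic function on $\R^n$, which must be constant by Liouville, contradicting \eqref{eqn unit osc W}. If instead $\bar H_1\not\equiv 0$, both $d_1\bar W_1$ and $\bar H_1$ are non-trivial, continuous, subharmonic functions with disjoint positivity sets, so by connectedness their closed supports cannot cover $\Omega_\infty$ and there exists $x_0\in\Omega_\infty$ at which both vanish. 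The Alt--Caffarelli--Friedman monotonicity formula applied at $x_0$ (in the interior case; in the boundary case one first reflects evenly, which preserves all three inequalities) makes
\[
J(R)\;=\;\frac{1}{R^4}\prod_{u\in\{d_1\bar W_1,\bar H_1\}}\int_{B_R(x_0)}\frac{|\nabla u|^2}{|x-x_0|^{n-2}}\,dx
\]
non-decreasing in $R$; but the uniform $\alpha$-Hölder bound (with $\alpha<1$) together with standard Caccioppoli estimates gives $\int_{B_R}|\nabla u|^2\leq CR^{n-2+2\alpha}$, so $J(R)=O(R^{4\alpha-4})\to 0$ as $R\to+\infty$. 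Hence $J\equiv 0$, which forces one of $d_1\bar W_1,\bar H_1$ to be constant, again a contradiction. Since this is the last remaining possibility for $M_n$, it would complete \textbf{Case W)}.

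The main obstacle is the step from $M_n\to+\infty$ to rigorous segregation together with the correct signed limits of the three inequalities in \eqref{eqn W1h1}; the Caccioppoli estimate needs to be carried out carefully on the blow-up scale to guarantee that the gradient term in the test-function identity is genuinely bounded independently of $n$, and one must verify that the limiting pair $(\bar W_1,\bar H_1)$ lies in the class of functions to which the ACF monotonicity formula (or the segregation Liouville-type lemma of \cite{CaffKarLin,TVZ1}) applies. Once these ingredients are in place, the Liouville/monotonicity step is the standard way to close the argument.
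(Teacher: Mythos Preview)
Your argument is correct and follows the same two-step strategy as the paper: first force $\bar W_1 \bar H_1 \equiv 0$ from $M_n\to+\infty$, then use a segregation Liouville theorem to conclude. The only differences are in packaging. For the first step, the paper bypasses the Caccioppoli estimate that worries you by integrating by parts \emph{twice} against a smooth compactly supported $\eta$, obtaining
\[
M_n\delta^2\int H_{1,n}W_{1,n}\eta \;\le\; \int \aa_{1,n}W_{1,n}\eta \;+\; \int (d_1 W_{1,n})\,\Delta\eta,
\]
whose right-hand side is bounded directly by the local uniform bound on $W_{1,n}$ from Lemma~\ref{Hn bounded in W}; no gradient control is needed. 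For the second step, the paper simply invokes \cite[Lemma~A.4]{STTZ} on the limit system to get that one of $\bar W_1,\bar H_1$ vanishes identically, whereas you spell out the underlying ACF monotonicity argument (which is indeed the content of that lemma). Note also that the subharmonicity $-\Delta(d_1\bar W_1)\le 0$ and $-\Delta\bar H_1\le 0$ holds distributionally on all of $\Omega_\infty$, not just on the positivity sets, by the same double integration by parts; this is what makes the ACF formula directly applicable.
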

\begin{proof}
We can reasoning similarly as before by contradiction. First, let $\eta \in C_0^\infty(\R^n)$ be any non negative test function. By testing the equation in $d_1 W_{1,n}$ in \eqref{eqn W1h1}, we find that
\[
   M_n \delta^2 \int H_{1,n} W_{1,n} \eta \leq \int \aa_{1,n}W_{1,n} +  (d_1 W_{1,n}) \Delta\eta.
\]
Since the right hand side is uniformly bounded and $M_n \to +\infty$, by passing to the limit we obtain
\[
  \int \bar H_{1} \bar W_{1} \eta = 0 \qquad \text{for all $\eta \in C_0^\infty(\R^n)$, $\eta \geq 0$}.
\]
We then conclude that $H_{1} \bar W_{1} = 0$ in $\R^n$. Moreover 
\[
	\begin{cases}
		-\Delta (d_1 \bar W_{1}) \leq 0\\
		-\Delta \bar H_{1} \leq 0 &\text{in $\Omega_\infty$}\\
		-\Delta (d_1 \bar W_{1} - \bar H_{1}) \geq 0\\
		\partial_\nu \bar W_{1} = \partial_\nu \bar H_1 = 0 &\text{on $\partial \Omega_\infty$}
	\end{cases}
\]
where the last condition is meant in a weak sense. Thus, by \cite[Lemma A.4]{STTZ}, we find again that one between $\bar W_1$ and $\bar H_1$ has to be identically $0$. Since $\bar W_1$ is non constant, it must be that $\bar H_1 \equiv 0$. Once again, we have that $\bar W_1$ is a non constant positive harmonic function. A contradiction.
\end{proof}
With this final result we have reached a contradiction and concluded the proof in \textbf{Case W)}.

\noindent\textbf{Case H)} 
In this case there exists a sequence $z_n \in \partial B_1 \cap \overline{\Omega_\infty}$ such that
\begin{equation}\label{eqn unit osc H}
	1 =  {|H_{n}(0)-H_{n}(z)|} \geq C_n \max_{i=1,\dots,N_n} \frac{|W_{i,n}(x)-W_{i,n}(y)|}{|x -y|^{\alpha}} \qquad \text{for all $x \neq y$}.
\end{equation}
Recall that, by assumption, $C_n \to +\infty$. In particular, the sequence $H_n$ has uniformly bounded H\"older seminorm and all $\mathbf{W}_n$ have vanishing H\"older seminorm. More precisely, we already have that
\begin{equation}\label{eqn ultimate bound W}
	\max_{i = 1, \dots, N_n} |W_{i,n}(x)-W_{i,n}(y)| \leq \frac{1}{C_n}|x-y|^{\alpha}	 \qquad \text{for all $x,y \in \Omega_n$.}
\end{equation}
Up to a relabeling, we now assume that, the functions in $\mathbf{W}_n$ are order in such a way that
\[
	d_1 W_{1,n}(0) \geq d_i W_{i,n}(0) \qquad \text{for all $i = 1, \dots, N_n$}.
\]

 As in the previous case, we wish to show that the sequence $(\mathbf{W}_n, H_n)$ is bounded in $x = 0$. We first derive two differential inequalities that are satisfied by the function $H_n$.

\begin{lemma}\label{lem diff ineq Hn}
There exists a numerical sequence $\eps_n \to 0$ such that
\[
	\begin{cases}
		-\Delta H_n \leq \eps_n H_n - M_n \delta^3 \left(H_n^2 -  \sum_{i=1}^{N_n} d_i^2 W_{i,n}^2\right) &\text{in $\Omega_n$}\\
		-\Delta H_n \geq -\eps_n H_n - M_n \delta^{-3} \left(H_n^2 - \sum_{i=1}^{N_n} d_i^2 W_{i,n}^2\right) &\text{in $\Omega_n$}\\
		\partial_\nu H_n = 0 &\text{on $\partial \Omega_n$.}
	\end{cases}
\]
\end{lemma}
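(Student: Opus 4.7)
The plan is to obtain both inequalities directly from the second equation in \eqref{eqn bu}, which already gives a clean expression for $-\Delta H_n$, and then to sandwich the quadratic interaction term between two quadratic forms involving $H_n^2 - \sum_i d_i^2 W_{i,n}^2$ using only the uniform bounds from \eqref{unifass}.

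First I would rewrite the interaction term in a symmetric fashion:
\[
  \sum_{i=1}^{N_n} W_{i,n} \sum_{j\neq i} a_{ij} W_{j,n} = \sum_{i\neq j} a_{ij} W_{i,n} W_{j,n},
\]
and observe the algebraic identity
\[
  \sum_{i\neq j} d_i d_j W_{i,n} W_{j,n} = \Bigl(\sum_{i} d_i W_{i,n}\Bigr)^{\!2} - \sum_i d_i^2 W_{i,n}^2 = H_n^2 - \sum_{i} d_i^2 W_{i,n}^2.
\]
Since $\delta\le a_{ij}\le 1/\delta$ and $\delta^2 \le d_i d_j \le 1/\delta^2$ by \eqref{unifass}, the quotient $a_{ij}/(d_i d_j)$ lies in $[\delta^3, \delta^{-3}]$, and because all terms $W_{i,n}W_{j,n}$ are non-negative we obtain the pointwise two-sided bound
\[
  \delta^3 \Bigl(H_n^2 - \sum_i d_i^2 W_{i,n}^2\Bigr) \le \sum_{i\neq j} a_{ij} W_{i,n} W_{j,n} \le \delta^{-3} \Bigl(H_n^2 - \sum_i d_i^2 W_{i,n}^2\Bigr).
\]

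Next I would control the linear (reaction) contribution. Using the uniform estimate $\|\aa_{i,n}\|_{L^\infty}\le \delta^{-3} r_n^2$ established right after \eqref{eqn a and M}, together with $\sum_i W_{i,n} \le \delta^{-1} \sum_i d_i W_{i,n} = \delta^{-1} H_n$ (since $d_i\ge \delta$), I get
\[
  \Bigl|\sum_{i=1}^{N_n} \aa_{i,n} W_{i,n}\Bigr| \le \delta^{-3} r_n^2 \sum_i W_{i,n} \le \delta^{-4} r_n^2 \, H_n = \eps_n H_n,
\]
with $\eps_n := \delta^{-4} r_n^2 \to 0$. Substituting these bounds into the second line of \eqref{eqn bu} and treating the sign of the reaction contribution in the two directions yields exactly
\[
  -\eps_n H_n - M_n \delta^{-3}\Bigl(H_n^2 - \sum_i d_i^2 W_{i,n}^2\Bigr) \le -\Delta H_n \le \eps_n H_n - M_n \delta^{3}\Bigl(H_n^2 - \sum_i d_i^2 W_{i,n}^2\Bigr).
\]
The Neumann condition $\partial_\nu H_n = 0$ on $\partial\Omega_n$ follows at once from summing $\partial_\nu W_{i,n} = 0$ with weights $d_i$.

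There is really no obstacle here: the argument is purely algebraic once one has the right identity for $H_n^2 - \sum_i d_i^2 W_{i,n}^2$. The only item requiring a little care is the constant tracking, in particular making sure that the asymmetric matrix structure of $a_{ij}/(d_i d_j)$ does not prevent the sandwiching — this is handled by the crucial non-negativity of all cross terms $W_{i,n}W_{j,n}$, which allows a purely scalar comparison of the matrices.
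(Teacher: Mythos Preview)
Your argument is correct and follows essentially the same route as the paper: both proofs bound the interaction term using $\delta\le a_{ij}\le 1/\delta$, $\delta\le d_i\le 1/\delta$, and the identity $\sum_{i\neq j} d_i d_j W_{i,n}W_{j,n}=H_n^2-\sum_i d_i^2 W_{i,n}^2$, and both handle the reaction term via $\|\aa_{i,n}\|_{L^\infty}\le \delta^{-3}r_n^2$. The only cosmetic difference is that the paper first bounds each $-\Delta(d_i W_{i,n})$ separately and then sums, whereas you sum first (using the second line of \eqref{eqn bu}) and then sandwich the symmetric quadratic form in one shot; your version is in fact slightly cleaner.
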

\begin{proof}
We prove the first inequality. The second one follows by similar reasoning. We start from the equation \eqref{eqn bu} satisfied by $W_{i,n}$ and find
\begin{multline*}
	-\Delta (d_i W_{i,n}) = \aa_{i,n} W_{i,n} - M_n W_{i,n} \sum_{j \neq i} a_{ij} W_{j,n} \\
	\leq \eps_{n} d_i W_{i,n} - M_n \delta^2 W_{i,n} \left(\sum_{j\neq i} d_{j} W_{j,n} + d_i W_{i,n} - d_i W_{i,n}\right) \\
	\leq \eps_{n} d_i W_{i,n} - M_n \delta^3  \left(H_n d_i W_{i,n} - d_i ^2 W_{i,n}^2\right)
\end{multline*}
where the numerical sequence $\eps_{n} := \frac{1}{d_i} \sup_{x\in \Omega_n} \max_{i=1,\dots,N_n} \aa_{i,n}$ converges, by \eqref{eqn a and M}, to $0$. Summing the inequalities for $i = 1, \dots, N_n$, we find
\[
	-\Delta H_n \leq \eps_{n}H_n - M_n \delta^3 \left(H_n^2 -  \sum_{i=1}^{N_n} d_i^2 W_{i,n}^2\right). \qedhere
\]
\end{proof}

We point out that $H_n^2 \geq \sum_{i=1}^{N_n} d_i^2 W_{i,n}^2$. Moreover, by \eqref{eqn a and M} and the preliminary discussion there, we have that
\[
  \left\|\eps_n H_{n} \right\|_{L^\infty(\Omega_n)} \leq C(\delta) \frac{r_n^{2-\alpha}}{L_n} \to 0.
\]
By virtue of the same reasoning of Lemma \ref{lem estim via energy}, we find
\begin{lemma}\label{lem estim via energy H}
For any fixed $R > 0$ there exists $C > 0$
\[
    M_n \delta^3 \int_{\partial B_R \cap \Omega_n} H_{n} \left(H_n^2 -  \sum_{i=1}^{N_n} d_i^2 W_{i,n}^2\right) \leq C(R) (1 + H_{n}(0)).
\]
\end{lemma}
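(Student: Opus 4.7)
The plan is to mimic the energy argument used in Lemma \ref{lem estim via energy}, but replacing the equation satisfied by an individual component $W_{i,n}$ with the first differential inequality provided by Lemma \ref{lem diff ineq Hn}. The role of $W_{i,n}^2 \sum_{j\neq i} a_{ij} W_{j,n}$ in that previous argument will be played by the non-negative quantity $H_n\bigl(H_n^2 - \sum_i d_i^2 W_{i,n}^2\bigr)$.

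First I would multiply the inequality
\[
	-\Delta H_n \leq \eps_n H_n - M_n \delta^3 \bigl(H_n^2 - \textstyle\sum_i d_i^2 W_{i,n}^2\bigr)
\]
by the non-negative function $H_n$ itself, and integrate by parts on $B_r \cap \Omega_n$ for $r \in [R, 2R]$, using the Neumann condition $\partial_\nu H_n = 0$ on $\partial \Omega_n$. This produces
\[
	\int_{\partial B_r \cap \Omega_n} H_n\, \partial_\nu H_n \geq \int_{B_r \cap \Omega_n} |\nabla H_n|^2 + M_n \delta^3 \int_{B_r \cap \Omega_n} H_n\bigl(H_n^2 - \textstyle\sum_i d_i^2 W_{i,n}^2\bigr) - \eps_n \int_{B_r \cap \Omega_n} H_n^2,
\]
which is the analogue of the integration-by-parts identity defining $D(r)$ in the proof of Lemma \ref{lem estim via energy}. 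The crucial structural fact that $H_n^2 - \sum_i d_i^2 W_{i,n}^2 \geq 0$ pointwise is what allows us to work with the one-sided inequality.

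Next I would introduce $E_H(r) := r^{-(N-1)} \int_{\partial B_r \cap \Omega_n} H_n^2$ and compute, exactly as before, that $E_H'(r) = 2 r^{-N}\int_{\partial B_r \cap \Omega_n} H_n\,\partial_\nu H_n$, so that $E_H(2R) - E_H(R) = \int_R^{2R} \tfrac{2}{r}\, \widetilde D_H(r)\,dr$ where $\widetilde D_H(r) := r^{-(N-2)} \int_{\partial B_r \cap \Omega_n} H_n\,\partial_\nu H_n$. On the one hand, the uniform H\"older bound \eqref{eqn unif Holder bu} applied to $H_n^2$ yields, as in the previous lemma,
\[
	|E_H(2R) - E_H(R)| \leq C(R)\bigl(1 + H_n(0)\bigr).
\]
On the other hand, the previous step combined with the monotonicity in $r$ of the ball integrals shows that for every $s \in [R, 2R]$,
\[
	\widetilde D_H(s) \geq \tfrac{M_n \delta^3}{2^{N-2} R^{N-2}} \int_{B_R \cap \Omega_n} H_n\bigl(H_n^2 - \textstyle\sum_i d_i^2 W_{i,n}^2\bigr) - \tfrac{\eps_n}{R^{N-2}}\int_{B_{2R}\cap\Omega_n} H_n^2.
\]

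Combining the two-sided bound on $E_H(2R)-E_H(R)$ with $\int_R^{2R}\tfrac{2}{r}dr = 2\ln 2$ yields the claimed estimate after the residual term involving $\eps_n \int H_n^2 \leq C(R)\eps_n (1 + H_n(0)^2)$ has been absorbed. The main (and only) delicate point is this absorption: we need that $\eps_n H_n(0)^2$ is controlled by $1 + H_n(0)$ along the blow-up sequence. This follows because, as observed immediately after Lemma \ref{lem diff ineq Hn}, $\|\eps_n H_n\|_{L^\infty(\Omega_n)} \leq C(\delta)\,r_n^{2-\alpha}/L_n \to 0$, so that $\eps_n H_n(0)$ is bounded, hence $\eps_n H_n(0)^2 \leq C H_n(0)$, and the error can be reabsorbed into the right-hand side $C(R)(1 + H_n(0))$. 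The passage from $B_R$ to $\partial B_R$ in the final statement is handled by the same convention used in Lemma \ref{lem estim via energy}.
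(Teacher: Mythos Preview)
Your proposal is correct and follows exactly the route the paper intends: the paper's own proof consists of the single line ``by virtue of the same reasoning of Lemma \ref{lem estim via energy}'', and you have spelled out precisely those details, replacing the equation for $W_{i,n}$ by the first differential inequality of Lemma \ref{lem diff ineq Hn} and using the non-negativity of $H_n^2 - \sum_i d_i^2 W_{i,n}^2$. Your treatment of the only new ingredient, the absorption of the $\eps_n H_n^2$ error term via $\|\eps_n H_n\|_{L^\infty}\to 0$ (so that $\eps_n H_n(0)^2 \leq C H_n(0)$), is exactly the observation the paper records immediately before stating the lemma.
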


As before, we can use the previous result to prove the boundedness of the blow-up sequence.
\begin{lemma}\label{Hn bounded in H}
The sequence $(\mathbf{W}_n, H_n)$ is bounded locally uniformly.
\end{lemma}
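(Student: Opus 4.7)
The plan is to mimic the proof of Lemma~\ref{Hn bounded in W}, now using Lemma~\ref{lem estim via energy H} in place of Lemma~\ref{lem estim via energy}, and crucially exploiting the dichotomy of Case H): each $W_{i,n}$ has vanishing H\"older seminorm ($\leq 1/C_n \to 0$) while $H_n$ carries the full unit oscillation \eqref{eqn unit osc H}. Since $W_{i,n}(0) \leq H_n(0)/\delta$, local uniform boundedness of $(\mathbf{W}_n, H_n)$ reduces, via \eqref{eqn unif Holder bu}, to the boundedness of $H_n(0)$. Assume therefore by contradiction that $H_n(0) \to +\infty$. The bounded H\"older seminorm of $H_n$ gives $H_n \geq H_n(0)/2$ on $B_R \cap \Omega_n$ for $n$ large, while \eqref{eqn ultimate bound W} gives $d_i W_{i,n}(x) = d_i W_{i,n}(0) + o(1)$ uniformly in $x \in B_R$ and in $i$.

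Relabel so that $d_1 W_{1,n}(0) = \max_i d_i W_{i,n}(0)$ and fix $\eta \in (0,1)$. In the \emph{non-dominant} subcase $d_1 W_{1,n}(0) \leq (1-\eta) H_n(0)$, the bound $\sum_i d_i^2 W_i^2 \leq (\max_j d_j W_j) H_n \leq (1-\eta/2) H_n^2$ on $B_R$ yields $H_n^2 - \sum_i d_i^2 W_i^2 \geq c(\delta,\eta) H_n(0)^2$ on $\partial B_R$, so Lemma~\ref{lem estim via energy H} forces $M_n H_n(0)^3 \lesssim C(R)(1+H_n(0))$ and thus $M_n H_n(0)^2 \to 0$. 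In the \emph{dominant} subcase $d_1 W_{1,n}(0) > (1-\eta) H_n(0)$, we have $W_{1,n}(0) \to +\infty$ and the residue $G_n := H_n - d_1 W_{1,n} = \sum_{i \neq 1} d_i W_{i,n} \geq 0$ inherits the full unit oscillation of $H_n$ (since $d_1 W_{1,n}$ has vanishing oscillation). If $G_n(0)$ remains bounded along a subsequence, $G_n$ converges locally uniformly to a bounded, nonnegative, nonconstant $\bar G$; passing to the limit in the equation of $G_n$ (whose quadratic cross terms are nonpositive) together with $\eps_n G_n \to 0$ gives $-\Delta \bar G \leq 0$ in $\Omega_\infty$ with Neumann boundary condition, so after even reflection $\bar G$ becomes a bounded subharmonic function on $\R^n$, hence constant by Liouville --- contradicting the unit oscillation. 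If instead $G_n(0) \to +\infty$, one iterates the dichotomy with $G_n$ in the role of $H_n$ (using the analogs of Lemmas~\ref{lem diff ineq Hn} and~\ref{lem estim via energy H} for the reduced subsystem), and the non-dominant analysis yields the analogous $M_n G_n(0)^2 \to 0$.

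Once $M_n(H_n^2 - \sum_i d_i^2 W_i^2) \to 0$ locally uniformly (or a direct Liouville contradiction on $\bar G$ is reached), Lemma~\ref{lem diff ineq Hn} together with $\|\eps_n H_n\|_{L^\infty} \to 0$ lets us pass to the limit in the differential inequalities for $\hat H_n := H_n - H_n(0)$. By Ascoli--Arzel\`a and \eqref{eqn unif Holder bu}, $\hat H_n \to \hat H_\infty$ locally uniformly along a subsequence, with $\hat H_\infty$ globally $\alpha$-H\"older continuous on $\Omega_\infty$, weakly harmonic with Neumann boundary condition, $\hat H_\infty(0) = 0$, and $|\hat H_\infty(z)| = 1$ for $z = \lim z_n$. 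Reflecting evenly across $\partial \Omega_\infty$ when nonempty, $\hat H_\infty$ extends to a globally $\alpha$-H\"older continuous harmonic function on $\R^n$; interior gradient estimates for harmonic functions give $|\nabla \hat H_\infty(x_0)| \leq C R^{\alpha-1} \to 0$ as $R \to +\infty$ (using $\alpha < 1$), so $\hat H_\infty$ is constant, contradicting $|\hat H_\infty(z)| = 1$.

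The main obstacle is the dominant subcase: there the energy gap $H_n^2 - \sum_i d_i^2 W_i^2$ is no longer comparable to $H_n^2$, so Lemma~\ref{lem estim via energy H} applied directly yields only $M_n H_n(0) \leq C$, which is insufficient to pass to the limit in the equation for $\hat H_n$. The workaround is to shift attention to the residue $G_n$: the vanishing H\"older seminorm of the dominant component $W_{1,n}$ forces $G_n$ to carry the full unit oscillation of $H_n$, and one closes the argument either by a direct Liouville argument on a bounded subharmonic limit of $G_n$, or by iterating the dominant/non-dominant dichotomy on the reduced subsystem.
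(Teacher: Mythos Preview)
Your overall architecture (reduce to $H_n(0)$ bounded, split into dominant/non-dominant according to $d_1W_{1,n}(0)$ vs.\ $H_n(0)$, then run a Liouville argument on $H_n-H_n(0)$) matches the paper's. But the dominant subcase, which you yourself flag as the main obstacle, contains a genuine error, and the final Liouville step is incomplete.

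\medskip
\textbf{The Liouville step for $\bar G$ is false in dimension $n\geq 3$.} You argue that if $G_n(0)$ stays bounded, the limit $\bar G$ is a bounded nonnegative subharmonic function on $\R^n$, hence constant. This is simply not true for $n\geq 3$: for instance $u(x)=\max\{0,\,1-|x|^{2-n}\}$ is bounded, nonnegative, subharmonic on $\R^n$ and nonconstant. So this branch does not close. The paper avoids this entirely: in the dominant case $d_1W_{1,n}(0)\to H_n(0)$ it applies Lemma~\ref{lem estim via energy} (not Lemma~\ref{lem estim via energy H}) directly to $W_{1,n}$, obtaining
\[
M_n H_n(0)^3 \;\lesssim\; M_n\int_{\partial B_R} H_n\,W_{1,n}^2 \;\leq\; C(R)\bigl(1+W_{1,n}(0)\bigr)\;\lesssim\; 1+H_n(0),
\]
and hence $M_nH_n(0)^2$ is bounded, exactly as in the non-dominant case. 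No recursion on $G_n$ is needed, and your proposed ``iterate the dichotomy'' has no termination mechanism anyway since $N_n$ is unbounded.

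\medskip
\textbf{The final Liouville argument misses the case $\Lambda>0$.} From the energy estimate you only get $M_nH_n(0)^2$ \emph{bounded}, not $\to 0$ (the inequality $M_nH_n(0)^3\lesssim 1+H_n(0)$ gives $M_nH_n(0)^2\lesssim 1$). Consequently $M_n\bigl(H_n^2-\sum_i d_i^2W_{i,n}^2\bigr)$ converges locally uniformly to some constant $\Lambda\geq 0$, not necessarily zero. The paper proves this convergence by writing the difference at two points and using $M_nH_n\to 0$ together with \eqref{eqn ultimate bound W}. If $\Lambda=0$ your harmonic Liouville works; but if $\Lambda>0$, the limit $\hat H_\infty$ satisfies $-\delta^{-3}\Lambda\leq -\Delta\hat H_\infty\leq -\delta^3\Lambda$, and the paper rules this out by a growth comparison (testing against the fundamental solution yields a lower bound $\sim R^2$ on spherical averages, incompatible with the $R^\alpha$ H\"older growth). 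Your proposal omits this case.
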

\begin{proof}
We assume, by contradiction, that $H_n(0) \to +\infty$. We show first that $M_n H_n(0)$ is bounded uniformly. We consider two distinct cases:

\noindent\textbf{Case A)} Assume first that there exists $\eps > 0$ such that, for $n$ large enough
\[
	d_1 W_{1,n}(0) < (1-\eps) H_n(0).
\]
Then, recalling \eqref{eqn ultimate bound W}, we have
\begin{multline*}
	 \sum_{i=1}^{N_n} d_i^2 W_{i,n}^2(x) \leq \sum_{i=1}^{N_n} \left( d_i W_{i,n}(0) + \frac{d_i}{C_n}|x|^\alpha\right) d_i W_{i,n}(x)  \leq \left( d_1 W_{1,n}(0) + \frac{1}{\delta C_n}|x|^\alpha\right) \sum_{i=1}^{N_n}  d_i W_{i,n}(x)\\
	 \leq \left((1-\eps) H_{n}(0) + \frac{1}{\delta C_n}|x|^{\alpha} \right) H_n(x) \leq \left((1-\eps) H_{n}(x) + (1-\eps) |x|^{\alpha} + \frac{1}{\delta C_n}|x|^{\alpha} \right) H_n(x).
\end{multline*}
By Lemma \ref{lem estim via energy H}, we have
\[
	M_n \delta^3 \int_{B_R} \left(\eps H_n  - (1-\eps) |x|^{\alpha} - \frac{1}{\delta C_n}|x|^{\alpha} \right) H_n^2 \leq C(R) (1+H_n(0)).
\]
Recalling that $H_n(0) \to +\infty$ and that $H_n$ has uniformly bounded H\"older seminorm, we find that
\[
	    M_n \delta^3 \frac{\eps}{2}  H_n(0)^3 \leq M_n \delta^3 C'(R)\int_{\partial B_R \cap \Omega_n}  \frac{\eps}{2} H_{n}^3 \leq C''(R) (1 + H_{n}(0)).
\]
Thus $M_n H_n(0)^2$ is bounded uniformly.

\noindent\textbf{Case B)} Assume now that
\[
	d_1 W_{1,n}(0) \to H_n(0).
\]
We are in a similar situation as the one in \textbf{Case 2)} of Lemma \ref{Hn bounded in W}. By considering the equation satisfied by $W_{1,n}(0)$, we find from Lemma \ref{lem estim via energy} the estimate
\[
	\frac{1}{2} M_n H_n(0)^3 \leq C'(R) M_n \int_{\partial B_R} H_n W_{1,n}^2 \leq C''(R) (1 + W_{1,n}(0)) \leq 2C''(R) (1+H_n(0))
\]
and again, this implies that $M_n H_n(0)^2$ is bounded uniformly.

In either cases, there exists $C \geq 0$ such that $M_n H_n(0)^2 \leq C$. In particular we have $M_n \to 0$. Up to striking out a subsequence, we obtain that
\[
	M_n H_n^2 \to C \geq 0 \quad \text{and} \quad M_n H_n \to 0 \qquad \text{locally uniformly in $\Omega_n$}.
\]
Let $u \in C^{0,\alpha}(\Omega_\infty)$ be the local uniform limit of $H_n - H_n(0)$. Observe that, again by assumptions, $u$ is non constant. Let $x,y \in \Omega_\infty$. We have
\begin{multline*}
  M_n \left( H_n^2(x) - \sum_{i=1}^{N_n} d_i W_{i,n}^2(x)\right) - M_n \left( H_n^2(y) - \sum_{i=1}^{N_n} d_i W_{i,n}^2(y)\right) \\
  = M_n\left(H_n^2(x) - H_n^2(y)\right)  - M_n \left(\sum_{i=1}^{N_n} d_i W_{i,n}^2(x) - \sum_{i=1}^{N_n} d_i W_{i,n}^2(y)\right)\\
  = M_n \left(H_n(x) + H_n(y)\right)\left(H_n(x) - H_n(y)\right)  \\
  - M_n \left(\sum_{i=1}^{N_n} d_i^2 (W_{i,n}(x) + W_{i,n}(y)) (W_{i,n}(x) - W_{i,n}(y))\right).
\end{multline*}
Since $M_n \to 0$, \utss{}, we find that there exists $\Lambda \geq 0$ such that, locally uniformly in $\Omega_\infty$,
\[
  M_n \left( H_n^2(x) - \sum_{i=1}^{N_n} d_i W_{i,n}^2(x)\right) \to \Lambda \geq 0.
\]
By passing to the limit the inequalities in Lemma \ref{lem diff ineq Hn}, up to an even extension of the function $u$ to the whole $\R^n$, we find that
\[
	\begin{cases}
		-\Delta u \leq - \delta^{3} \Lambda &\text{in $\R^n$}\\
		-\Delta u \geq- \delta^{-3} \Lambda &\text{in $\R^n$}
	\end{cases}
\]
for a non negative constant $\Lambda$. We have two possibilities.
\begin{enumerate}
\item $\Lambda = 0$. In this case, the limit function $u$ is harmonic, globally H\"older continuous and non constant, a contradiction.
\item $\Lambda > 0$. Let $\Gamma(x) = \Gamma(|x|)$ be the fundamental solution of the Laplacian centered at $0$. Multiplying the first inequality satisfied by $u$ with $\Gamma(x) - \Gamma(R)$ and integrating by parts in the ball $B_R(0)$, we find
\[
	- \frac{1}{|\partial B_R|} \int_{\partial B_R} u(x) = \frac{1}{|\partial B_R|} \int_{\partial B_R} (u(0) - u(x)) \leq - \delta^{3} \Lambda \int_{B_R} (\Gamma(x) - \Gamma(R)) \leq -C' R^2
\]
where $C' > 0$. On the other hand, exploiting once more the H\"older continuity of $u$, we conclude
\[
	\left|\frac{1}{|\partial B_R|} \int_{\partial B_R} u(x) \right| \leq C R^{\alpha}
\]
and the two assertions are in contradiction.
\end{enumerate}
Thus we conclude that $H_n$ is bounded. This also implies the boundedness of $\mathbf{W}_n$.
\end{proof}

Since $H_n$ is bounded, similarly to \textbf{Case W)}, we can pass to the limit, up to a possible subsequence. Let $\bar H$ be the uniform local limit of $H_n$. If needed, we assume implicitly that $\bar H$ has been extended evenly to a function defined over the whole space $\R^n$. We recall that, necessarily, $\bar H$ is globally H\"older continuous, non-negative and non constant.

\begin{lemma}
There exists $C > 0$ such that $M_n \geq C$.
\end{lemma}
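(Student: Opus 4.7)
The plan is to argue by contradiction along the exact lines of the corresponding lemma for \textbf{Case W)}, replacing the role of $\bar W_1$ by $\bar H$. Suppose that, up to a subsequence, $M_n \to 0$. By Lemma \ref{Hn bounded in H} and the uniform H\"older estimate \eqref{eqn unif Holder bu}, $H_n$ is locally uniformly bounded and has uniformly bounded $\alpha$-H\"older seminorm. By Ascoli--Arzel\`a, up to a further subsequence, $H_n$ converges locally uniformly to a non-negative, globally $\alpha$-H\"older continuous limit $\bar H$ defined on $\Omega_\infty$.

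Next I would pass to the limit in the two differential inequalities of Lemma \ref{lem diff ineq Hn}. The key algebraic observation is that
\[
  0 \leq H_n^2 - \sum_{i=1}^{N_n} d_i^2 W_{i,n}^2 = 2\sum_{i<j} d_i d_j W_{i,n} W_{j,n} \leq H_n^2,
\]
so this quadratic correction is locally uniformly bounded. Since $M_n \to 0$ and $\eps_n \to 0$, both the upper and the lower obstacles in Lemma \ref{lem diff ineq Hn} vanish locally uniformly. Thus $\bar H$ satisfies $-\Delta \bar H = 0$ in $\Omega_\infty$ in the distributional sense, together with homogeneous Neumann data on $\partial \Omega_\infty$.

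Finally, after the usual even extension across $\partial \Omega_\infty$ when $\Omega_\infty$ is a half-space, $\bar H$ becomes a non-negative harmonic function on the whole of $\R^n$ whose $\alpha$-H\"older seminorm is bounded, so $|\bar H(x)| \leq \bar H(0) + C|x|^\alpha$ with $\alpha \in (0,1)$. By the generalized Liouville theorem for harmonic functions of sub-linear growth, $\bar H$ must be constant. On the other hand, passing to the limit in \eqref{eqn unit osc H} and in the normalization $z_n \in \partial B_1 \cap \overline{\Omega_n}$ produces a point $z \in \partial B_1 \cap \overline{\Omega_\infty}$ with $|\bar H(0) - \bar H(z)| = 1$, a contradiction. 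I do not expect any real obstacle here: the only point that could fail is control of the nonlinear term $M_n(H_n^2 - \sum d_i^2 W_{i,n}^2)$, but the elementary identity above turns it into the product of an infinitesimal and a locally bounded quantity, so the Liouville-plus-reflection scheme already exploited throughout \textbf{Case W)} transfers verbatim.
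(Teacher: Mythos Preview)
Your argument is correct and follows exactly the same route as the paper's own proof, which simply states that if $M_n\to 0$ along a subsequence then Lemma~\ref{lem diff ineq Hn} forces $\bar H$ to be harmonic, yielding a contradiction with its non-constancy. You have merely supplied the details the paper omits: the elementary bound $0\le H_n^2-\sum_i d_i^2 W_{i,n}^2\le H_n^2$, the local uniform boundedness from Lemma~\ref{Hn bounded in H}, and the Liouville-via-even-extension conclusion.
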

\begin{proof}
If $M_n \to 0$ along a subsquence, from Lemma \ref{lem diff ineq Hn} we deduce that up to extracting a converging subsequence $\bar H$ is also harmonic, a contradiction.
\end{proof}

\begin{lemma}
There exists $C > 0$ such that $M_n \leq C$.
\end{lemma}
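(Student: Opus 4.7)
The plan is to argue by contradiction, assuming $M_n \to +\infty$ (up to a subsequence), and to show that the limit $\bar H$ must be a non-constant bounded subharmonic function attaining its maximum at an interior point, contradicting the strong maximum principle.

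First, I would test the upper differential inequality from Lemma \ref{lem diff ineq Hn} with an arbitrary nonnegative cutoff $\eta \in C^\infty_0(\Omega_\infty)$, integrate by parts (using $\partial_\nu H_n = 0$ on $\partial\Omega_n$), and use that $H_n$ is bounded in $L^\infty_{\mathrm{loc}}$ while $\eps_n \to 0$ to obtain
\[
M_n \delta^3 \int \Bigl(H_n^2 - \sum_{i} d_i^2 W_{i,n}^2\Bigr)\eta \leq \int H_n \,(-\Delta \eta) + o(1) \leq C(\eta).
\]
Since the integrand is nonnegative and $M_n \to +\infty$, this forces $H_n^2 - \sum_i d_i^2 W_{i,n}^2 \to 0$ in $L^1_{\mathrm{loc}}(\Omega_\infty)$. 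Combined with the uniform local convergence $H_n \to \bar H$, this gives $\sum_i d_i^2 W_{i,n}^2 \to \bar H^2$ in $L^1_{\mathrm{loc}}$.

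Next I would introduce the auxiliary function $A_n(x) := \max_i d_i W_{i,n}(x)$. From \eqref{eqn ultimate bound W}, each $W_{i,n}$ has H\"older seminorm at most $1/C_n$, so $A_n$ has vanishing H\"older seminorm; since $A_n(0) \leq H_n(0)$ is bounded, up to a further subsequence $A_n \to A^*$ locally uniformly for some constant $A^* \geq 0$, with $A^* \leq \bar H(0)$. The pointwise inequality
\[
\sum_i d_i^2 W_{i,n}^2 \leq \bigl(\max_i d_i W_{i,n}\bigr) \sum_j d_j W_{j,n} = A_n H_n
\]
passes to the limit to yield $\bar H^2 \leq A^* \bar H$, hence $\bar H \leq A^* \leq \bar H(0)$ a.e., and by continuity everywhere on $\Omega_\infty$.

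Finally, passing to the limit in the upper inequality of Lemma \ref{lem diff ineq Hn} (noting that the sign of the right-hand side is favorable and $\eps_n H_n \to 0$) yields $-\Delta \bar H \leq 0$ in the distributional sense, so $\bar H$ is subharmonic on $\Omega_\infty$; if $\Omega_\infty$ is a half-space, the Neumann condition lets us extend $\bar H$ evenly to a subharmonic function on all of $\R^n$. Since $\bar H$ now attains its global maximum $\bar H(0)$ at the interior point $0$, the strong maximum principle implies $\bar H \equiv \bar H(0)$. This contradicts \eqref{eqn unit osc H}, which gives $|\bar H(0) - \bar H(z)| = 1$ for some $z \in \partial B_1 \cap \overline{\Omega_\infty}$. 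The delicate step is the control of $A_n$: it relies essentially on the defining feature of Case H), namely that each individual $W_{i,n}$ becomes asymptotically constant, so that their pointwise maximum does as well; without this rigidity of the components, $\sum d_i^2 W_{i,n}^2$ could not be controlled by the universal bound $A^* \bar H$.
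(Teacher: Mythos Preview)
Your argument is correct and takes a genuinely different route from the paper's. The paper tests the upper inequality of Lemma~\ref{lem diff ineq Hn} against $H_n\eta^2$ to obtain a Caccioppoli-type estimate
\[
M_n\delta^3\int_{B_R(x_0)}\Bigl(H_n^2-\sum_i d_i^2 W_{i,n}^2\Bigr)H_n \leq C(1+R^{N-1+2\alpha}),
\]
and then splits into two cases according to whether $d_1 W_{1,n}(0)/H_n(0)$ stays strictly below $1$ or tends to $1$; in each case it localizes at a point where either $H_n$ or $H_n-d_1 W_{1,n}$ is bounded below and reads off a direct upper bound on $M_n$. You instead test against $\eta$ alone to force $H_n^2-\sum_i d_i^2 W_{i,n}^2\to 0$ in $L^1_{\mathrm{loc}}$, combine this with the elementary pointwise bound $\sum_i d_i^2 W_{i,n}^2\leq A_n H_n$ (where $A_n=\max_i d_i W_{i,n}$ becomes constant in the limit) to obtain $\bar H\leq A^*\leq \bar H(0)$ globally, and then let the strong maximum principle, applied to the subharmonic limit $\bar H$, deliver the contradiction. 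Your approach is shorter and eliminates the case split; the paper's is more quantitative (it yields an explicit integral inequality for $M_n$) and mirrors the structure of the proof of Lemma~\ref{Hn bounded in H}. Both hinge on the same defining feature of \textbf{Case H)}, the vanishing H\"older seminorm of each $W_{i,n}$, which is exactly what makes $A_n$ collapse to a constant.
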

\begin{proof}
We assume that $M_n \to +\infty$ along a subsequence. For $R > 0$ and $x_0 \in \R^n$, let us consider the function 
\[
	\eta(x) := \begin{cases} 
		1 &\text{in $B_R(x_0)$}\\
		R+1-|x| &\text{in $B_{R+1}(x_0)\setminus B_R(x_0)$}\\
		0 &\text{in $\R^n \setminus B_{R+1}(x_0)$}.
	\end{cases}
\]
We multiply the first inequality in Lemma \ref{lem diff ineq Hn} by $H_n \eta^2$ and integrate by parts. This yields
\[
	\int |\nabla (H_n \eta)|^2 + M_n \delta^3 \left(H_n^2-\sum_{i=1}^{N_n} d_i^2 W_{i,n}\right) H_n \eta^2 \leq \int |\nabla \eta|^2 H_n^2.
\]
By the definition of $\eta$, we have
\[
	\frac{1}{4} M_n \delta^3 \int_{B_R(x_0)} \left(H_n^2-\sum_{i=1}^{N_n} d_i^2 W_{i,n}^2\right) H_n \leq  \int_{B_{R+1}(x_0) \setminus B_R(x_0)} H_n^2 \leq C \left(1 + R^{N-1+2\alpha} \right).
\]
We recall that the function in the integral in the left hand side is non negative. We adopt a similar reasoning as that of Lemma \ref{Hn bounded in H}. We consider two distinct cases:

\noindent\textbf{Case A)} Assume first that $\eps > 0$ such that, for $n$ large enough
\[
	d_1 W_{1,n}(0) < (1-\eps) H_n(0).
\]
Then, by \eqref{eqn ultimate bound W}, we find
\begin{multline*}
	 \sum_{i=1}^{N_n} d_i^2 W_{i,n}^2(x) \leq \sum_{i=1}^{N_n} \left( d_i W_{i,n}(0) + \frac{d_i}{C_n}|x|^\alpha\right) d_i W_{i,n}(x)  \leq \left( d_1 W_{1,n}(0) + \frac{d_1}{C_n}|x|^\alpha\right) \sum_{i=1}^{N_n}  d_i W_{i,n}(x)\\
	  \leq \left( d_1 W_{1,n}(x_0) + \frac{d_1}{C_n}|x-x_0|^\alpha +  \frac{d_1}{C_n}|x|^\alpha\right) \sum_{i=1}^{N_n}  d_i W_{i,n}(x) \\
	 \leq \left((1-\eps) H_{n}(x_0) +  \frac{d_1}{C_n}|x-x_0|^\alpha +  \frac{d_i}{C_n}|x|^{\alpha} \right) H_n(x) \\
	 \leq \left((1-\eps) H_{n}(x) + (1-\eps) |x-x_0|^{\alpha} +  \frac{d_1}{C_n}|x-x_0|^\alpha +  \frac{d_1}{C_n}|x|^{\alpha} \right) H_n(x).
\end{multline*}
But then we have, for any $x_0 \in \R^n$
\[
	\frac{1}{4} M_n \delta^3 \int_{B_R(x_0)} \left(\eps H_n  - (1-\eps) |x-x_0|^{\alpha} -  \frac{d_1}{C_n}|x-x_0|^\alpha - \frac{d_1}{C_n}|x|^{\alpha} \right) H_n^2 \leq C \left(1+ R^{N-1+2\alpha}\right).
\]
Since by construction there exists $z_n \in \partial B$ such that $|H_n(0) - H_n(z_n)| = 1$, we find that there also exists a bounded sequence $y_n \in \overline{B_1}$ such that $H_n(y_n) > \frac12$. By taking $R> 0$ sufficiently small, $x_0 = y_n$ and $n$ large, we conclude from the last inequality that $M_n$ must be bounded from above.

\noindent\textbf{Case B)} Assume now that
\[
	d_1 W_{1,n}(0) \to H_n(0).
\]
Up to striking out a subsequence, we observe that both $W_{1,n}$ and $H_n$ converge locally uniformly on $\R^n$ to their respective limits $\bar W_1$ and $\bar H$. Moreover $d_1 W_{1,n} \leq H_n$ in $\Omega_n$. Since the local uniform limit of $d_1 \bar W_{1}$ is a constant, while $\bar H$ is necessarily not, we find that in any $B_R$
\[
	\bar H \geq d_1 \bar W_{1} \geq 0 \qquad \text{and} \qquad \bar H \not \equiv d_1 \bar W_{1}.
\]
Similarly to before, we find
\[
	 \sum_{i=1}^{N_n} d_i^2 W_{i,n}^2(x) \leq \sum_{i=1}^{N_n} \left( d_i W_{i,n}(0) + \frac{d_i}{C_n}|x|^\alpha\right) d_i W_{i,n}(x)  \leq \left( d_1 W_{1,n}(x) + \frac{2d_1}{C_n}|x|^\alpha\right) H_n(x).
\]
Substituting in the integral estimate we find
\[
	\frac{1}{4} M_n \delta^3 \int_{B_R(x_0)} \left(H_n- d_1 W_{1,n}(x) - \frac{2d_1}{C_n}|x|^\alpha \right) H_n^2 \leq  C \left(1+ R^{N-1+2\alpha}\right).
\]
Passing to the limit in $n$ we find again a contradiction.
\end{proof}

Let now $M$ be any finite positive limit of $M_n$. Again, without loss of generality, we may assume that $M = 1$.
\begin{proof}[Finale]
We consider now the sequence of functions
\[
  Q_n(x) := H_n(x) - d_1 W_{1,n}(0)
\]
We observe that, form the previous discussion, we already know that there exists a function $Q$ such that, \utss{}, $Q_n \to Q$ locally uniformly in $\Omega_\infty$. The function $Q$ is non constant and globally $\alpha$-H\"older continuous. 

By definition $H_n \geq d_1 W_{1,n}$ and $d_1 W_{1,n}$ converges locally uniformly to a constant. As a result $Q \geq 0$ in $\Omega_\infty$. Moreover we have
\begin{multline*}
   \sum_{i=1}^{N_n} d_i^2 W_{i,n}^2 - H_n^2 \leq \left(d_1 W_{1,n}(0) + \frac{1}{\delta C_n}|x|^{\alpha} \right)H_n - H_n^2 \\
   \leq H_n  \left( \frac{1}{\delta  C_n}|x|^{\alpha} - Q_n\right) \leq  (Q_n + d_{1} W_{1,n}(0)) \left(\frac{1}{\delta C_n}|x|^{\alpha} - Q_n\right) \\
   \leq - Q_n^2 +  \left(\frac{1}{\delta C_n}|x|^{\alpha} - d_1 W_{1,n}(0)\right)Q_n + \frac{ d_1}{\delta C_n}|x|^{\alpha}  W_{1,n}(0).
\end{multline*}
Similarly one can prove
\[
   \sum_{i=1}^{N_n} d_i^2 W_{i,n}^2 - H_n^2 \geq - Q_n^2 - \left(\frac{1}{\delta C_n}|x|^{\alpha} + d_1 W_{1,n}(0)\right)Q_n + \frac{ d_1}{\delta  C_n}|x|^{\alpha}  W_{1,n}(0).
\]
Thus, passing to the limit in \eqref{lem diff ineq Hn}, we find that $Q$ solves
\[
  - \delta^{-3} Q^2 \leq -\Delta Q \leq - \delta^3 Q^2 \qquad \text{in $\R^n$.}
\]
By \cite[Lemma 2]{BrezisInfinity}, we infer that necessarily $Q \leq 0$, that is, under our assumptions, $Q \equiv 0$. We have reached again a contradiction.
\end{proof}

We have thus reached a contradiction with the blow-up assumption. This conclude the proof of the $C^{0,\alpha}$ uniform bounds for $\mathbf{w}$ and the proof of Theorem \ref{prp asymptotic k}.

\section{The limit system is a finite dimensional system - Proof of Theorem \ref{prp sing lim}}

Once the uniform estimates in $\beta$ and $N$ are established, we can pass to the limit as $\beta \to +\infty$. For any sequence $(\mathbf{w}_n, u_n)$ of solutions of \eqref{eqn model k}, defined for $\beta_n \to +\infty$ (here we make no assumption on $N_n$), we introduce the limit  vector $(\bar{\mathbf{w}}, \bar u) \in C^{0,\alpha}(\overline{\Omega})  \times C^{2,\alpha}(\overline{\Omega})$ such that, \utss{},
\[
  \lim_{n \to +\infty} w_{i,n} = \bar w_{i} \quad C^{0,\alpha}(\overline{\Omega}), \forall i \qquad \text{and} \qquad \lim_{n \to +\infty} u_{n} = \bar u \quad C^{2,\alpha}(\overline{\Omega}).
\]
The existence of $(\bar{\mathbf{w}}, \bar u)$ is a consequence of Lemma \ref{lem lim bu WH}. Let $N$ stand now for the number of non-zero components of the vector $\bar{\mathbf{w}}$. We show that $N$ is bounded. In particular, as a result of \cite[Lemma 6.1 and Theorem 6.3]{BerestyckiZilio_PI}, we can give an explicit upper bound for $N$.
 
In the following we will be many concerned with the $\mathbf{w}$ components of the system. We shall deduce stronger compactness properties, and derive the system of differential inequalities verified at the limit of segregation. Regarding the component $u$, we immediately find
\begin{lemma}
We have that
\[
	\begin{cases}
		- \Delta \bar u = \left( \lambda - \mu \bar u - \sum_{i} k_i \bar w_i\right) \bar u &\text{ in $\Omega$}\\
		\partial_\nu \bar u = 0 &\text{ on $\partial \Omega$}.
	\end{cases}	
\]
\end{lemma}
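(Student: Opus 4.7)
The approach is a direct passage to the limit in the $u$-equation. Setting $S_n := \sum_i k_i w_{i,n}$, the estimate from Theorem \ref{prp asymptotic k} (applied with the weights $\pi_i = k_i$) provides a uniform $C^{0,\alpha}(\overline{\Omega})$ bound on $S_n$. Together with $0 \leq u_n \leq \lambda/\mu$, this gives a uniformly $C^{0,\alpha}$-bounded right-hand side in
\[
-D \Delta u_n = (\lambda - \mu u_n - S_n) u_n \text{ in }\Omega, \qquad \partial_\nu u_n = 0 \text{ on } \partial\Omega,
\]
so Schauder theory yields a uniform $C^{2,\alpha}(\overline{\Omega})$ bound on $u_n$. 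Along the subsequence already selected above, $u_n \to \bar u$ in $C^{2,\alpha'}(\overline{\Omega})$ for every $\alpha' < \alpha$; in particular $\Delta u_n \to \Delta \bar u$ uniformly and $\partial_\nu u_n \to 0$ uniformly on $\partial \Omega$, which immediately delivers both the Neumann condition on $\bar u$ and the linear part of the limit equation.

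For the reaction term, a further subsequence extraction (Arzel\`a-Ascoli on the uniformly $C^{0,\alpha}$-bounded sequence $S_n$) produces $S \in C^{0,\alpha}(\overline{\Omega})$ with $S_n \to S$ uniformly. Passing to the limit in the PDE then gives
\[
-D \Delta \bar u = (\lambda - \mu \bar u - S)\bar u \text{ in } \Omega, \qquad \partial_\nu \bar u = 0 \text{ on } \partial \Omega,
\]
which is the stated equation provided one can identify $S = \sum_i k_i \bar w_i$.

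To make this identification, fix $M \in \N$: the truncation $\sum_{i=1}^M k_i w_{i,n}$ converges uniformly to $\sum_{i=1}^M k_i \bar w_i$ because each $w_{i,n} \to \bar w_i$ in $C^{0,\alpha}(\overline{\Omega})$ and there are only finitely many terms. Since $w_{i,n} \geq 0$, this forces $\sum_{i=1}^M k_i \bar w_i(x) \leq S(x)$ pointwise, and monotone convergence in $M$ yields $\sum_i k_i \bar w_i \leq S$ in $\overline{\Omega}$. The main (and only) obstacle is the reverse inequality, i.e.\ ruling out escape of mass to indices $i \to \infty$ as $n \to \infty$. I would handle it by combining the uniform Hölder bound on the full sum $S_n$ with the uniform $L^\infty$ bound $\sum_i w_{i,n} \leq C$ from Theorem \ref{prp asymptotic k} to produce a tail estimate showing that $\sum_{i > M} k_i w_{i,n} \to 0$ uniformly in $n$ as $M \to \infty$; alternatively, the identification follows \emph{a posteriori} from the fact that only finitely many $\bar w_i$ are nonzero (Theorem \ref{prp sing lim}, proved in the sequel), which reduces the series to a finite sum for which the limit passage is automatic.
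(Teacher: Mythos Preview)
Your argument is the paper's argument spelled out: the paper's entire proof reads ``This is a direct consequence of the a priori estimates of Theorem~\ref{prp asymptotic k}.'' You correctly unpack this as: uniform $C^{0,\alpha}$ control on $S_n=\sum_i k_i w_{i,n}$ (from the weighted-sum estimate in Theorem~\ref{prp asymptotic k}) plus Schauder gives $u_n\to\bar u$ in $C^{2,\alpha'}$, and one passes to the limit in the equation.

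You go further than the paper by flagging a genuine subtlety it glosses over: when $N_n\to\infty$, the identification of the uniform limit $S$ of $S_n$ with $\sum_i k_i\bar w_i$ is not automatic from componentwise convergence (mass could escape to high indices). Your first proposed fix---extracting a tail estimate $\sum_{i>M}k_i w_{i,n}\to 0$ uniformly in $n$ from the bounds $\sum_i w_{i,n}\le C$ and $\|S_n\|_{C^{0,\alpha}}\le C$ alone---is not justified as stated: those bounds do not by themselves prevent the counterexample $w_{i,n}=c/N_n$ for $i\le N_n$, which gives $\bar w_i\equiv 0$ but $S\not\equiv 0$. Your second fix is the right one and is not circular: the finiteness of the nonzero $\bar w_i$ (Lemma~\ref{lem min vol}) relies only on Lemma~\ref{lem eq in support} and the pointwise bound $\bar u\le\lambda/\mu$, neither of which needs the full limit equation for $\bar u$. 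So logically one may invoke that finiteness here, or simply postpone the present lemma until after Lemma~\ref{lem min vol}; either way the identification $S=\sum_i k_i\bar w_i$ then reduces to a finite sum and is immediate.
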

\begin{proof}
This is a direct consequence of the a priori estimates of Theorem \ref{prp asymptotic k}.
\end{proof}

We start with the following results

\begin{lemma}\label{lem limits are h1}
We have that
\[
  \lim_{n \to +\infty} w_{i,n} = \bar w_i  \quad H^1(\Omega), \forall i \qquad \text{and} \qquad \lim_{n \to +\infty} u_{n} = \bar u \quad H^1(\Omega).
\]
Furthermore, the limit functions verify
\[
  \bar w_i \bar w_j \equiv 0 \quad \text{in $\Omega$}, \qquad \forall i \neq j.
\]
For any $i$ there exists a (non negative) measure $\mu_i$ such that
\[
  \lim_{n \to +\infty} \int_\Omega \beta_n w_{i,n} \sum_{j\neq i} a_{ij} w_{j,n} \eta = \int \eta d \mu_i \qquad \text{for all $\eta \in C^\infty(\overline{\Omega})$}.
\]
Morevoer $\mu_i \in (H^1(\Omega))'$ and each limit function $\bar w_i$ is a weak solution of
\[%begin{equation}
  d_i \int_{\Omega }\nabla \bar w_{i} \cdot \nabla \eta + \int \eta d\mu_i = \int_{\Omega}  \left(-\omega_i + k_i \bar u \right) \bar w_{i} \eta \qquad \text{for all $\eta \in H^1(\Omega)$}.
\]%end{equation}
\end{lemma}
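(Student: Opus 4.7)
The plan is to establish the four assertions in the following order: first an $H^1$ bound and the segregation property, then the existence of the limit measure $\mu_i$, next the weak limit equation, and finally strong $H^1$ convergence. The convergence $u_n \to \bar u$ in $H^1(\Omega)$ is immediate from the $C^{2,\alpha}$ convergence supplied by Theorem \ref{prp asymptotic k}, so I focus on the components $w_{i,n}$.

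For the energy bound and segregation, I would test the equation for $w_{i,n}$ in \eqref{eqn model k} against $w_{i,n}$ itself, using the Neumann condition to integrate by parts without boundary terms, obtaining
\[
  d_i \int_\Omega |\nabla w_{i,n}|^2 + \beta_n \int_\Omega w_{i,n}^2 \sum_{j\neq i} a_{ij} w_{j,n} = \int_\Omega (-\omega_i + k_i u_n) w_{i,n}^2.
\]
Both terms on the left are non-negative, and the right-hand side is uniformly bounded in $n$ by the $L^\infty$ bounds of Theorem \ref{prp asymptotic k}. This simultaneously yields $\|w_{i,n}\|_{H^1(\Omega)} \leq C$ (hence weak $H^1$ convergence to $\bar w_i$, \utss{}) and the estimate $\beta_n \int_\Omega w_{i,n}^2 w_{j,n} \leq C$. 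Dividing by $\beta_n \to \infty$ and exploiting the uniform convergence $w_{i,n} \to \bar w_i$, I conclude $\int_\Omega \bar w_i^2 \bar w_j = 0$, whence $\bar w_i \bar w_j \equiv 0$ in $\Omega$ by non-negativity.

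For the measure, I would introduce the functionals $T_{i,n}(\phi) := \beta_n \int_\Omega w_{i,n} \sum_{j\neq i} a_{ij} w_{j,n} \phi$ and, using the weak form of the equation for $w_{i,n}$, rewrite them as
\[
  T_{i,n}(\phi) = -d_i \int_\Omega \nabla w_{i,n} \cdot \nabla \phi + \int_\Omega (-\omega_i + k_i u_n) w_{i,n} \phi
\]
for $\phi \in H^1(\Omega)$. Cauchy--Schwarz combined with the $H^1$ bound from the previous step yields $|T_{i,n}(\phi)| \leq C\|\phi\|_{H^1(\Omega)}$, and by testing with $\phi \equiv 1$ I obtain $|T_{i,n}(\phi)| \leq C\|\phi\|_\infty$ on continuous functions. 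Riesz representation together with weak-$*$ compactness of bounded Borel measures then provides, along a further subsequence, a non-negative Radon measure $\mu_i$ with $T_{i,n} \overset{*}{\rightharpoonup} \mu_i$; the $H^1$ estimate shows that $\mu_i$ extends to an element of $(H^1(\Omega))'$.

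Passing to the limit in the above identity then yields the claimed distributional equation for $\bar w_i$: the gradient term converges by weak $H^1$ convergence, the potential term by uniform convergence of $(u_n, w_{i,n})$, and the competition term by the very definition of $\mu_i$. To upgrade to strong $H^1$ convergence, I would test the limit identity with $\phi = \bar w_i$ and compare with the $n\to\infty$ limit of the energy identity from step one. The key step is the exchange of limits
\[
  \lim_{n\to\infty} \beta_n \int_\Omega w_{i,n}^2 \sum_{j\neq i} a_{ij} w_{j,n} = \lim_{n\to\infty} T_{i,n}(w_{i,n}) = \int_\Omega \bar w_i \, d\mu_i,
\]
which follows from the uniform operator bound on $T_{i,n}$ in $(C(\overline{\Omega}))^*$ together with the uniform convergence $w_{i,n} \to \bar w_i$. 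This forces $\int_\Omega |\nabla w_{i,n}|^2 \to \int_\Omega |\nabla \bar w_i|^2$, and strong convergence follows from weak convergence plus norm convergence. The main subtlety is precisely this exchange of limits and the consistency between the two descriptions of $\mu_i$ as a Radon measure and as an $(H^1)'$ functional; once this is settled, the remaining steps are routine.
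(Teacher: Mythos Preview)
Your proposal is correct and follows essentially the same route as the paper: the energy identity obtained by testing with $w_{i,n}$, the weak formulation tested against smooth $\phi$ to define $\mu_i$, and the upgrade to strong $H^1$ convergence via the uniform $C^0$ convergence of $w_{i,n}$ combined with the uniform total-mass bound on the competition term. The only cosmetic difference is in the last step: the paper subtracts the two weak formulations and tests directly with $\eta = w_{i,n} - \bar w_i$, bounding the competition and measure terms by $C\|w_{i,n}-\bar w_i\|_{L^\infty(\Omega)}$, whereas you take the equivalent ``weak convergence plus norm convergence'' route through $T_{i,n}(w_{i,n}) \to \int \bar w_i\, d\mu_i$; both arguments rest on the same two ingredients and yield the same conclusion.
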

\begin{proof}
The convergence of the sequence $u_n$ is immediate consequence of the convergence in $C^{2,\alpha}$, thus we only need to consider the convergence of $w_{i,n}$. First of all, we show that the sequence is uniformly bounded in $H^1(\Omega)$ and thus convergence weakly in $H^1(\Omega)$ to $\bar w_i$. To do this, we multiply the equation in $w_{i,n}$ by $w_{i,n}$ itself and integrate by parts. Exploiting the uniform $L^\infty$ bound, we find
\[
  d_i \int_{\Omega} |\nabla w_{i,n}|^2 + \beta_n w_{i,n}^2 \sum_{j\neq i} w_{j,n} = \int_{\Omega} \left(-\omega_i + k_i u_n\right) w_{i,n}^2 \leq C.
\]
Here the constant $C$ is independent of $n$. Observe that this inequality already implies that $\bar w_i \bar w_j \equiv 0$ in $\Omega$ for all $i \neq j$, since $\beta_n \to +\infty$. Now, since boundedness in norm and pointwise convergence imply weak convergence, we find that $w_{i,n} \rightharpoonup \bar w_i$ in $H^1(\Omega)$. Let $\eta \in C^\infty(\overline{\Omega})$ be any smooth test function. Testing the equation against $\eta$, we find
\begin{equation}\label{eqn weak eta}
 \int_{\Omega} \left( \beta_n w_{i,n} \sum_{j\neq i} w_{j,n} \right) \eta = \int_{\Omega} - d_i \nabla w_{i,n} \cdot \nabla \eta + \left(-\omega_i + k_i u_n\right) w_{i,n} \eta.
\end{equation}
The right hand side is bounded uniformly and converges as $n \to +\infty$. Thus the linear functional in the left hand side converges weakly to a non negative measure of $\overline{\Omega}$, which we denote by $\mu_i$. Moreover, by passing to the limit in $n$ in the previous identity we find
\begin{equation}\label{eqn weak eta lim}
  d_i \int_{\Omega }\nabla \bar w_{i} \cdot \nabla \eta + \int \eta d\mu_i = \int_{\Omega}  \left(-\omega_i + k_i \bar u \right) \bar w_{i} \eta.
\end{equation}
Observe that by this identity we infer that not only $\mu_i$ is a non negative measure of $\overline{\Omega}$, but that $\mu_i \in (H^1(\Omega))'$. By taking $\eta \equiv 1$, we also find that there exists $C \geq 0$ such that 
\[
  \int_{\Omega} \left( \beta_n w_{i,n} \sum_{j\neq i} w_{j,n} \right) = \int_{\Omega} \left(-\omega_i + k_i u_n\right) w_{i,n}  \leq C \quad \text{and} \quad \mu_i(\overline{\Omega}) = \int_{\Omega} \left(-\omega_i + k_i \bar u \right) \bar w_{i} \leq C.
\]
Thus, subtracting \eqref{eqn weak eta} and \eqref{eqn weak eta lim} and taking $\eta = w_{i,n} - \bar w_i$, we find
\begin{multline*}
  \int_{\Omega} d_i \left| \nabla ( w_{i,n}-\bar w_{i}) \right|^2 = \int_\Omega \left(-\omega_i + k_i u_n\right) \left|w_{i,n} - \bar w_i\right|^2 \\
  - \int_{\Omega} \left( \beta_n w_{i,n} \sum_{j\neq i} w_{j,n} \right) ( w_{i,n}-\bar w_{i}) + \int ( w_{i,n}-\bar w_{i}) d \mu_i \leq C \left\| w_{i,n}-\bar w_{i} \right\|_{L^\infty(\Omega)}
\end{multline*}
and thus we obtain the strong convergence of the sequence in $H^1(\Omega)$.
\end{proof}

To go further in the analysis of the limit equation, we need a classical estimate \cite[Lemma 4.4]{ContiTerraciniVerzini_AdvMat_2005}. Here we show a stronger result that still follows by a similar argument.

\begin{lemma}\label{lem dec exp}
Let $\Omega \subset \R^n$ be a smooth domain. Let $x_0 \in \overline{\Omega}$, $0 < \rho < 1$ and $B_\rho = B_\rho(x_0)$. For $U, M > 0$ we consider $u \in H^1(\overline{B_\rho \cap \Omega})$ a non-negative subsolution of
\[
  \begin{cases}
    - \Delta u \leq - M u &\text{in $B_\rho \cap \Omega$}\\
    u \leq U &\text{in $B_\rho \cap \Omega$}\\
    \partial_\nu u \leq 0  &\text{on $B_\rho \cap \partial\Omega$.}
  \end{cases}
\]
There exist a constant $C = C(\Omega)$ such that, for $\rho$ small and $M$ large enough, 
\[
  u(x) \leq U e^{-C \rho \sqrt{M}} \qquad \text{for every $x \in \overline{B_{\rho/2}\cap \Omega}$.}
\]
\end{lemma}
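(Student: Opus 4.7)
The strategy is the standard comparison argument: construct a smooth radial super-solution $\phi$ of $-\Delta \phi + M \phi = 0$ on a ball of radius $\rho$ with $\phi \ge U$ on the outer sphere, for which explicit pointwise bounds on $\phi$ are available, and then apply the weak maximum principle to $u - \phi$. The only real distinction is between the interior case $\overline{B_\rho(x_0)} \subset \Omega$ and the boundary case $B_\rho(x_0) \cap \partial \Omega \neq \emptyset$, handled by flattening $\partial \Omega$.

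\emph{Interior case.} Assume $\overline{B_\rho(x_0)} \subset \Omega$. Let $\phi(x) = \Phi(|x - x_0|)$ where $\Phi$ is the unique solution of the modified Helmholtz ODE
\[
\Phi''(r) + \frac{n-1}{r}\Phi'(r) - M\Phi(r) = 0, \quad r \in (0,\rho),
\]
with $\Phi'(0) = 0$ and $\Phi(\rho) = U$. This is a modified Bessel equation, whose regular solution is proportional to $r^{-(n-2)/2} I_{(n-2)/2}(\sqrt{M}\, r)$; the classical asymptotics of $I_\nu$ yield, once $\sqrt{M}\rho \ge 1$, the bound $\Phi(r) \le C_n U (\rho/r)^{(n-1)/2} e^{-\sqrt{M}(\rho - r)}$ on $(0,\rho)$, and in particular $\Phi(\rho/2) \le \tilde C_n U e^{-\sqrt{M}\rho/2}$. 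By construction $\phi \ge u$ on $\partial B_\rho(x_0)$, $-\Delta(\phi - u) + M(\phi - u) \ge 0$ weakly in $B_\rho(x_0)$, so the weak maximum principle for $-\Delta + M$ gives $u \le \phi$ throughout, proving the claim.

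\emph{Boundary case.} For $\rho$ smaller than a geometric constant $\rho_0(\Omega)$, introduce boundary normal coordinates $y = \Psi(x)$ on a neighbourhood of $B_\rho(x_0) \cap \partial \Omega$, straightening the portion of $\partial \Omega$ met by $B_\rho(x_0)$ to $\{y_n = 0\}$ and mapping the Neumann co-normal to $\partial_{y_n}$; the map $\Psi$ is bi-Lipschitz with constants depending only on $\Omega$. In these coordinates $\tilde u = u \circ \Psi^{-1}$ is a sub-solution of $-a^{ij}(y)\partial_{ij}\tilde u + b^i(y)\partial_i \tilde u \le -M \tilde u$, with $a^{ij}(0)=\delta^{ij}$, $b^n(y',0)=0$, and all coefficients smooth and uniformly controlled by $\Omega$. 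The vanishing of $b^n$ at $y_n = 0$ together with the Neumann condition $\partial_{y_n}\tilde u = 0$ guarantees that the even reflection $\hat u(y', y_n) := \tilde u(y', |y_n|)$ is a distributional sub-solution of an elliptic inequality of the same form on a full ball $\hat B_\rho$. Since for $\rho$ small the coefficients of that operator are $O(\rho)$-close to $(\delta^{ij}, 0)$, the Bessel barrier of the interior case is still a super-solution once $M$ is large enough to absorb the perturbation of the top- and first-order terms by (at most) half of the zero-order term $M\phi$. Comparison on $\hat B_\rho$ and reversion of the change of coordinates yield the exponential bound for $u$, with $C$ depending only on $\Omega$.

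The main obstacle is the boundary case: one has to verify that after straightening the boundary the even reflection produces a genuine distributional sub-solution of an elliptic inequality with uniformly controlled coefficients, which forces the chart to be chosen so that the co-normal reduces to $\partial_{y_n}$ and the tangential drift $b^n$ vanishes on the symmetry plane. Once this is granted, the perturbation of $-\Delta + M$ by bounded first- and zero-order terms is harmless for $M \gg 1$, which is why the statement assumes $M$ large and $\rho$ small.
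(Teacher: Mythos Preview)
Your argument is correct and follows the same two-case split as the paper, but the execution differs in two places. For the barrier, you take the exact radial solution of $-\Delta\phi+M\phi=0$ (a modified Bessel function) and quote its asymptotics, whereas the paper uses the explicit Gaussian-type supersolution $v=U\,e^{\alpha(|X|^2-\rho^2)}$ with $\alpha\sim\sqrt{M}/\rho$, which is not an exact solution but requires only an elementary computation and no special-function asymptotics. For the boundary case, you flatten and then \emph{reflect evenly} across $\{y_n=0\}$ to reduce to a full-ball problem with perturbed coefficients, absorbing the perturbation into the zero-order term for $M$ large; the paper instead stays in the half-ball after flattening, writes the operator in divergence form $-\div(A\nabla\cdot)+M$, and builds the same Gaussian barrier directly there, checking that $\partial_\nu v\ge 0$ on the flat boundary so that comparison applies with the Neumann inequality $\partial_\nu\bar u\le 0$. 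Your reflection route is conceptually clean because it recycles the interior barrier, but it forces you to track the conditions under which the reflected function remains a distributional subsolution (you state $\partial_{y_n}\tilde u=0$, but the hypothesis only gives an inequality; fortunately the sign is the favourable one for a subsolution). The paper's route avoids that bookkeeping at the cost of redoing the barrier computation with variable coefficients, which it does explicitly and gets the constant $C$ in closed form.
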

\begin{proof}
We distinguish between two cases. If $B_\rho \subset \Omega$ then the result follows by \cite[Lemma 4.4]{ContiTerraciniVerzini_AdvMat_2005}. It suffices to consider a supersolution of the form 
\[
  v = U e^{\alpha (X^2-\rho^2)}
\]
for a suitable $\alpha > 0$. Thus we need only to consider the case $B_\rho \setminus \Omega \neq \emptyset$. We will present here a proof of this second case.

We recall that a domain $\Omega$ is smooth ($C^{2,\alpha}$) if, locally at its boundary, there exist diffeomorphisms  (of class $C^{2,\alpha}$) between $\Omega$ and a half-space $H = \{X\in \R^n : X_1 > 0\}$. Fixing a point $x_0$ close to the boundary and $\rho$ small, we let $F$ be such a diffeomorphism at $B_\rho \cap \Omega$. Among the possible choices of $F$, we assume here that $F(x_0) = 0$, $J_{F}(x)\nu = e_1$ for all $x \in B_\rho \cap \partial \Omega $, and, moreover, that there exists $\eps > 0$ small such that for any $0 < r \leq \rho$ it holds
\begin{equation}\label{eqn set are sphere}
  B_{(1-\eps)r}(x_0) - x_0 \subset F(B_r(x_0)) \subset B_{(1+\eps)r}(x_0) - x_0.
\end{equation}
This is always true if $\rho$ is sufficiently small and thus $x_0$ sufficiently close to the boundary.

Under these assumptions, we let $\bar u(X) = u \circ F^{-1}(X)$. We find that $\bar u$ verifies
\begin{equation}\label{eqn ubar diffeo}
  \begin{cases}
    - \div( A \nabla \bar u) + M \bar u \leq 0 &\text{in $F(B_\rho) \cap H$}\\
    \bar u \leq U &\text{in $F(B_\rho) \cap H$}\\
    \partial_\nu \bar u \leq 0  &\text{on $F(B_\rho) \cap \partial H$}
  \end{cases}
\end{equation}
where $A(X) = |\det J_{F^{-1}}(X)| J_{F^{-1}}^{-T}(X) J_{F^{-1}}^{-1}(X)$ and $J_{F^{-1}}(X)$ is the Jacobian matrix of the diffeomorphism $F^{-1}$ at $X = F(x)$. The matrix field $A$ is regular (more precisely, $C^{1,\alpha}$) and positive-definite at each point. We let
\[
  L = \sup_{X \in F(B_\rho) \cap H}  |\nabla A(X)| \quad \text{and} \quad \Lambda = \sup_{X \in F(B_\rho) \cap H, |\xi| = 1} A(X) \xi \cdot \xi.
\]
Observe that the constants $L$ and $\Lambda$ ultimately depend on $\Omega$ and, for $x_0$ close to the boundary and $\rho$ small, $L \to 0$ and $\Lambda \to 1$, since the diffeomorphisms locally convergence to an isometry.

We look for a super-solution in $B_{(1-\eps)\rho}(x_0) \cap H \subset F(B_\rho) \cap H$ of equation \eqref{eqn ubar diffeo} of the form
\[
  v = U e^{\alpha(X^2-(1-\eps)^2\rho^2)}
\]
with $\alpha > 0$. Clearly we have
\[
  \begin{cases}
     v = U &\text{in $\partial B_{(1-\eps)\rho}\cap H$}\\
    \partial_\nu v \geq 0  &\text{on $B_{(1-\eps)\rho} \cap \partial H$.}
  \end{cases}
\]
Concerning the differential equation, in order to have a super-solution in $B_{(1-\eps)\rho}\cap H$ we impose
\begin{multline*}
  \div(A\nabla v) - M v = \left\{2\alpha \left[ \div A \cdot X + n \tr A  \right] + 4 \alpha^2 AX \cdot X - M\right\} v\\
  \leq \left\{2\alpha \left[ n L (1-\eps) \rho + n^2 \Lambda \right] + 4 \alpha^2 \Lambda (1-\eps)^2\rho^2 - M\right\} v \leq 0.
\end{multline*}
If $M$ is large enough, the inequality is verified by
\[
  \alpha = \frac{1}{3(1-\eps)\rho} \sqrt{\frac{M}{\Lambda}}.
\]
By the comparison principle we find $\bar u \leq v$ in $B_{(1-\eps)\rho}\cap H$. For all $X \in F(B_{\rho/2}(x_0)) \cap H \subset B_{(1+\eps)\rho/2}\cap H$ this inequality reads
\[
  \bar u(X) \leq U e^{\frac{1}{3(1-\eps)\rho} \sqrt{\frac{M}{\Lambda}}(\frac14 (1+\eps)^2\rho^2-(1-\eps)^2\rho^2)} = U e^{-\frac{3-10\eps+3\eps^2}{12 (1-\eps)} \frac{1}{\sqrt{\Lambda}} \rho \sqrt{M}} = Ue^{-C\rho \sqrt{M}}.
\]
We conclude the proof transforming back to the original domain.
\end{proof}

\begin{lemma}\label{lem eq in support}
For any $i \in \N$, either $\bar w_i \equiv 0 $ or
\[
  \begin{cases}
    -d_i \Delta \bar w_i = (k_i \bar u - \omega_i) \bar w_i &\text{in $\{\bar w_i > 0\} \cap \Omega$}\\
    \bar w_i = 0 &\text{on $\partial \{\bar w_i > 0\} \cap \Omega$}\\
    \partial_\nu \bar w_i = 0  &\text{on $\{\bar w_i > 0\} \cap \partial \Omega$}.
  \end{cases}
\]
In particular, the measure $\mu_i$ is supported on $\partial \{\bar w_i > 0\}$.
\end{lemma}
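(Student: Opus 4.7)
The plan is to combine the local positivity of $\bar w_i$ with the exponential decay estimate of Lemma~\ref{lem dec exp} to show that, on neighbourhoods where $\bar w_i$ stays bounded away from zero, the entire competition term $\beta_n \sum_{j \neq i} a_{ij} w_{j,n}$ vanishes uniformly in the limit. Assume $\bar w_i \not \equiv 0$ and fix $x_0 \in \overline{\Omega}$ with $\bar w_i(x_0) > 0$. By continuity of $\bar w_i$ and the uniform $C^{0,\alpha}$ convergence $w_{i,n} \to \bar w_i$ provided by Theorem~\ref{prp asymptotic k}, there exist $\rho, c > 0$ such that $w_{i,n} \geq c$ on $\overline{B_\rho(x_0) \cap \Omega}$ for all $n$ large.

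Next I would establish the key quantitative step, namely that $\beta_n \sum_{j \neq i} w_{j,n} \to 0$ uniformly on a smaller ball. Introduce the scalar function $s_n := \sum_{j \neq i} d_j w_{j,n}$, which satisfies $0 \leq s_n \leq \delta^{-5}$ and $\partial_\nu s_n = 0$ on $\partial \Omega$. Summing the equations of \eqref{eqn subsys renorm} over $j \neq i$ and discarding the non-positive contributions of the mutual interactions $w_{j,n} w_{l,n}$ for $j, l \neq i$, one obtains
\[
-\Delta s_n \leq \delta^{-3} s_n - \beta_n w_{i,n} \sum_{j \neq i} a_{ji} w_{j,n}.
\]
Using $w_{i,n} \geq c$, $a_{ji} \geq \delta$ and the equivalence $\delta s_n \leq \sum_{j \neq i} w_{j,n} \leq \delta^{-1} s_n$, this reduces to $-\Delta s_n \leq -\frac{1}{2}\beta_n c \delta^2 \, s_n$ on $B_\rho(x_0) \cap \Omega$ for $n$ large. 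Lemma~\ref{lem dec exp} then yields $s_n(x) \leq \delta^{-5} \exp(-C \rho \sqrt{\beta_n})$ on $B_{\rho/2}(x_0) \cap \Omega$, and consequently $\beta_n \sum_{j \neq i} a_{ij} w_{j,n} \leq C' \beta_n e^{-C'' \sqrt{\beta_n}} \to 0$ uniformly on this smaller set.

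With the competition term tending to zero locally uniformly, passing to the limit in the weak form of the equation for $w_{i,n}$ tested against $\eta \in C^\infty(\overline{\Omega})$ with support in $B_{\rho/2}(x_0) \cap \overline{\Omega}$ yields $-d_i \Delta \bar w_i = (k_i \bar u - \omega_i) \bar w_i$ on $B_{\rho/2}(x_0) \cap \Omega$ together with $\partial_\nu \bar w_i = 0$ on $B_{\rho/2}(x_0) \cap \partial \Omega$. Covering the open set $\{\bar w_i > 0\}$ by such balls gives the first and third relations of the system; the second, $\bar w_i = 0$ on $\partial \{\bar w_i > 0\} \cap \Omega$, is simply the continuity of $\bar w_i$. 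To identify the support of $\mu_i$, for any $\eta \in C^\infty(\overline{\Omega})$ supported in $\{\bar w_i > 0\}$ (possibly touching $\partial \Omega$) the equation just obtained and integration by parts give $d_i \int \nabla \bar w_i \cdot \nabla \eta = \int (-\omega_i + k_i \bar u) \bar w_i \eta$, so that the identity of Lemma~\ref{lem limits are h1} forces $\int \eta \, d\mu_i = 0$; for $\eta$ supported in the interior of $\{\bar w_i = 0\}$ the same conclusion is immediate since $\bar w_i \equiv 0$ there. Hence $\mu_i$ is concentrated on $\partial \{\bar w_i > 0\}$.

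The main obstacle is the unboundedness of $N$: applying Lemma~\ref{lem dec exp} separately to each $w_{j,n}$ would yield $w_{j,n} \leq C e^{-C'\sqrt{\beta_n}}$ uniformly in $j$, but this leaves an uncontrolled factor $N_n$ upon summation and does not suffice to kill $\beta_n \sum_{j \neq i} w_{j,n}$. The decisive observation in the second step is that the mutual interactions among the competitors $\{w_{j,n}\}_{j \neq i}$ appear with the favourable sign in the equation for the scalar sum $s_n$, so that Lemma~\ref{lem dec exp} can be applied directly to $s_n$ and bypass the summation issue entirely.
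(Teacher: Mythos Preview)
Your proof is correct and follows essentially the same route as the paper: both introduce the scalar sum $s_n = \sum_{j\neq i} d_j w_{j,n}$ (denoted $h_{i,n}$ in the paper), observe that the mutual interactions among the $j\neq i$ appear with the favourable sign in its equation, and apply Lemma~\ref{lem dec exp} to obtain exponential decay and kill the competition term. The only differences are cosmetic: you should be summing the equations of \eqref{eqn model k} rather than the renormalized system \eqref{eqn subsys renorm} (though the computation is identical), and your treatment of the support of $\mu_i$ is slightly more explicit than the paper's, which only spells out that $\mu_i$ vanishes on $\{\bar w_i>0\}$ and leaves the vanishing on the interior of $\{\bar w_i=0\}$ implicit.
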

\begin{proof}
Since $\bar w_i$ is H\"older continuous, the set $\{\bar w_i > 0\}$ is relatively open in $\overline{\Omega}$. If it is not empty, let $x_0 \in \{\bar w_i > 0\}$. There exists $r_0 > 0$ such that
\[
  \bar w_i(x) > \frac{1}{2} \bar w_i(x_0) \qquad \text{for all $x \in B_{r_0}(x_0) \cap \overline{\Omega}$}.
\]
We need to show that 
\[
  \mu_i\left(B_{r_0/2}\cap\overline{\Omega}\right) = 0.
\]
As $w_{i,n}$ converges uniformly to $\bar w_i$, for $n$ large enough we have that
\[
  w_{i,n}(x) > \frac{1}{4} \bar w_i(x_0) \qquad \text{for all $x \in B_{r_0}(x_0) \cap \overline{\Omega}$}.
\]
For $n$ large, we sum all the components of system \eqref{eqn model k} for $j \neq i$. Letting $h_{i,n} = \sum_{j\neq i} d_j w_{j,n}$, we find that the estimate
\begin{multline*}
   -\Delta h_{i,n} = \sum_{j \neq i } \left(-\omega_{j} + k u_n\right) w_{j,n} - \beta_n \sum_{j\neq i} \sum_{k \neq j} a_{jk} w_{j,n} w_{k,n} \\ 
   \leq C(1-\beta_n w_{i,n}) h_{i,n} \leq C\left(1-\frac14 \beta_n \bar w_{i}(x_0)\right) h_{i,n}
\end{multline*}
holds true in $\Omega$, and in particular in $B_{r_0}(x_0) \cap \overline{\Omega}$. Here $C > 0$ is a constant that can be chosen independently of $n$. We now recall that, by Proposition \ref{prp asymptotic k}, the sequence $h_{i,n}$ is bounded uniformly in $\Omega$. By Lemma \ref {lem dec exp} we find that there exist positive constants  $C, C'$ such that, for $n$ large enough
\[
	h_{i,n}(x) \leq C e^{-C' r_0 \sqrt{\beta_n \bar w_i(x_0)} } \qquad \text{for all $x \in \overline{B_{r_0/2}(x_0) \cap \Omega}$}.
\]
As a result, we have that
\[
	\sup_{x \in \overline{B_{r_0/2}(x_0) \cap \Omega}}\beta_n \sum_{j \neq i} a_{ij} w_{j,n}w_{i,n} \leq C \beta_n e^{-C' r_0 \sqrt{\beta_n \bar w_i(x_0)} } \to 0
\]
for $n$ that diverges to $\to +\infty$. We conclude the proof by plugging this estimate in the equation satisfied by $w_{i,n}$ and taking the limit in $n$ on the set $\overline{B_{r_0/2}(x_0) \cap \Omega}$.
\end{proof}

We have  established the limit equation satisfied by the densities $\bar w_i$. Using it, we can prove that, in the limit of segregation, only a finite number of densities can persists. First we point out that the sets $\{\bar w_i >0\}$ are relatively open subsets of $\overline{\Omega}$ and they are also disjoint, thus
\[
	\sum_{i} |\{\bar w_i > 0\}| \leq |\Omega|.
\]

\begin{lemma}\label{lem min vol}
Assume that the limit $\mathbf{\bar w}$  has at least two non-zero components. Let $i \in \N$ stand for the index of a component $\bar w_i$ such that the area of the set $\{\bar w_i > 0\}$ is at most  equal to half of the area of $\Omega$. Then, it holds
\[
  \left|\left\{\bar w_{i} > 0 \right\}\right|^{2/n} \geq C(\Omega) \frac{d_i \mu}{\lambda k_i - \mu \omega_i}
\]
where $C>0$ is a constant that depends only on $\Omega$. Consequently, there exists $\hat N \in \N$ such that at most $\hat N$ components of $\mathbf{w}$ are non zero.
\end{lemma}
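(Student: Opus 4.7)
The plan is to interpret $\bar w_i$, on the open set $A_i := \{\bar w_i > 0\}$, as a positive principal eigenfunction of the mixed boundary value problem derived in Lemma~\ref{lem eq in support}, and to compare its Rayleigh quotient with a Sobolev--Poincar\'e inequality tailored to $H^1(\Omega)$-functions supported on a subset of relative measure at most $1/2$. Since $\bar w_i$ is continuous and vanishes on $\partial A_i \cap \Omega \supset \mathrm{supp}\,\mu_i$, testing the weak formulation of Lemma~\ref{lem limits are h1} with $\eta = \bar w_i$ removes the measure term, and using the pointwise bound $\bar u \leq \lambda/\mu$ from Theorem~\ref{prp asymptotic k} yields the energy estimate
\[
	d_i \int_{\Omega} |\nabla \bar w_i|^2 \;=\; \int_{\Omega} (k_i \bar u - \omega_i)\, \bar w_i^2 \;\leq\; \frac{\lambda k_i - \mu \omega_i}{\mu} \int_{\Omega} \bar w_i^2.
\]

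The second ingredient is the following Sobolev--Poincar\'e-type inequality: for every $v \in H^1(\Omega)$ supported on a set $A \subset \Omega$ with $|A| \leq |\Omega|/2$,
\[
	\int_{\Omega} v^2 \;\leq\; C(\Omega)\, |A|^{2/n} \int_{\Omega} |\nabla v|^2.
\]
I would derive this in two steps. First, Cauchy--Schwarz applied to $\int_\Omega v = \int_A v$ gives $\bigl(\int_\Omega v\bigr)^2 \leq |A|\int_\Omega v^2 \leq \tfrac12 |\Omega| \int_\Omega v^2$, so the squared mean of $v$ contributes at most $\tfrac12 \int v^2$ to $\int v^2$; combined with the standard Poincar\'e inequality on $\Omega$ around its mean value this upgrades to $\|v\|_{L^2(\Omega)}^2 \leq C(\Omega) \|\nabla v\|_{L^2(\Omega)}^2$. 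Second, the Sobolev embedding $H^1(\Omega) \hookrightarrow L^{2n/(n-2)}(\Omega)$ for $n\geq 3$ (a variant with a higher exponent handles $n\leq 2$) together with the H\"older bound $\int v^2 \leq |A|^{2/n}\|v\|_{L^{2n/(n-2)}}^2$ yields the claim.

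Combining the two inequalities at $v = \bar w_i$, $A = A_i$, and canceling $\|\nabla \bar w_i\|_{L^2}^2 > 0$ (which is positive since $\bar w_i$ is non-trivial on $A_i$ while vanishing on the set $\Omega \setminus A_i$ of positive measure), yields the desired lower bound on $|A_i|^{2/n}$. The bound on the total number of non-zero components then follows from the disjointness of the supports $\{\bar w_j > 0\}$ established in Lemma~\ref{lem limits are h1}: at most one index $j$ can satisfy $|\{\bar w_j > 0\}| > |\Omega|/2$, every other non-zero component occupies area at least $(C(\Omega) d_j \mu /(\lambda k_j - \mu \omega_j))^{n/2}$, and summing these areas under the constraint $\sum_j |\{\bar w_j > 0\}| \leq |\Omega|$ produces the stated bound on $\hat N$ after absorbing the $+1$ into $C(\Omega)$. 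The main delicate step is proving the Sobolev--Poincar\'e inequality for $H^1(\Omega)$ rather than $H^1_0(\Omega)$, since $\bar w_i$ may be non-zero on $\partial \Omega$ and a direct Faber--Krahn bound is therefore not available; this is precisely what the small-support averaging trick above is designed to handle.
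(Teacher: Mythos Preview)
Your argument is correct and follows the same overall strategy as the paper: bound the Rayleigh quotient of $\bar w_i$ from above by $(\lambda k_i - \mu\omega_i)/(d_i\mu)$ using $\bar u \le \lambda/\mu$, and from below by $C(\Omega)^{-1}|A_i|^{-2/n}$ via an isoperimetric-type inequality. The only real difference lies in how that second inequality is obtained. The paper does not avoid Faber--Krahn; it invokes the \emph{relative} Faber--Krahn inequality of Buttazzo--Velichkov (\cite[Proposition~2.3]{SpectralDrop}), which is formulated precisely for the mixed Dirichlet--Neumann eigenvalue $\lambda_1(\{\bar w_i>0\})$ with Neumann data on $\partial\Omega$ and Dirichlet data on $\partial\{\bar w_i>0\}\cap\Omega$, under the hypothesis $|\{\bar w_i>0\}|\le |\Omega|/2$. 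Your route---Poincar\'e about the mean combined with the small-support trick to absorb the mean, then Sobolev embedding plus H\"older---yields an equivalent inequality directly for $H^1(\Omega)$-functions with small support, and is entirely self-contained. So your remark that ``a direct Faber--Krahn bound is not available'' is slightly off: a relative version is available and is what the paper uses; your argument simply reproves a weak form of it by elementary means. Both approaches give the same conclusion with constants depending only on $\Omega$.
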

The previous statement gives already an a priori estimate on the number $\bar N$. Indeed, it must be
\[
	\hat N \leq C(\Omega) \left(\max_{i}\frac{\lambda k_i - \mu \omega_i}{d_i \mu}\right)^{\frac{n}{2}}.
\]
%Later, using the same argument of \cite[Theorem 6.3]{BerestyckiZilio_P1}, we will give a more explicit, yet asymptotic, estimate on $\hat N$.
\begin{proof}
Let $i \in \N$ be fixed, we consider the function $\bar w_i$ and the relatively open subset $\{\bar w_i > 0\}$ of $\overline{\Omega}$.
Combining the previous results, we find that $\bar w_i \in H^1_0(\{\bar w_i > 0\})$ satisfies
\[
  \begin{cases}
    -\Delta \bar w_i \leq \frac{\lambda k_i - \mu\omega_i}{d_i\mu} \bar w_i &\text{in $\{\bar w_i > 0\} \cap \Omega$}\\
    \bar w_i = 0 &\text{on $\partial \{\bar w_i > 0\} \cap \Omega$}\\
    \partial_\nu \bar w_i = 0  &\text{on $\{\bar w_i > 0\} \cap \partial \Omega$}.
  \end{cases}
\]
Here we have used the fact that $\bar u \leq \lambda / \mu$ pointwisely in $\Omega$. For the function $\bar w_i$ to be non zero, it must be that 
\[
	\lambda_1(\{\bar w_i > 0\}) \leq \frac{\lambda k_i - \mu\omega_i}{d_i\mu}
\]
where $\lambda_1$ is the first eigenvalue of the Laplace operator in $\{\bar w_i > 0\}$ with the same boundary conditions of $\bar w_i$
\[
	\lambda_1(\{\bar w_i > 0\}) = \min_{u \in H^1_0(\{\bar w_i > 0\})} \left. \int |\nabla u|^2 \right/ \int u^2.
\]
Since by assumption $|\{\bar w_i > 0\}| \leq \frac12 |\Omega|$, by the relative Faber-Krahn inequality (see \cite[Proposition 2.3]{SpectralDrop}), we find that
\[
	\lambda_1^{-1}(\{\bar w_i > 0\}) \leq C(\Omega) \left|\left\{\bar w_{i} > 0 \right\}\right|^{2/n}
\]
which yields the desired inequality.
\end{proof}

Once we have shown that in the segregation limit only a finite number of components of the vector $\mathbf{\bar w}$ can be non zero, we can reason in the same way as in \cite[Theorem 6.3]{BerestyckiZilio_PI} in order to give a more explicit, yet asymptotic, estimate on $\hat N$, by means of Weyl's asymptotic law for the eigenvalues of the Neumann Laplacian . Indeed, we find
\begin{lemma}
Let $(\mathbf{w},\bar u)$ be any limit of solutions of \eqref{eqn model k} when $\beta \to +\infty$. Let $\hat N$ be the number of components of $\mathbf{\bar w}$ that are not identically zero. It holds
\[
  \hat N \lesssim \frac{|\Omega|}{\left(\frac{\pi}{4} \right)^{\frac{n}{2}} \Gamma\left(\frac{n}{2}+1\right)}   \left(\max_{i} \frac{\lambda k_i - \mu \omega_i}{d_i \mu} \right)^{\frac{n}{2}}
\]
for $\max_{i} \frac{\lambda k_i - \mu \omega_i}{d_i \mu} \to +\infty$.
\end{lemma}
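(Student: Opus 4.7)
The plan is to combine the free-boundary PDE from Lemma \ref{lem eq in support} with the min--max characterization of the Neumann eigenvalues of $-\Delta$ on $\Omega$ and Weyl's asymptotic formula, following the template of \cite[Theorem 6.3]{BerestyckiZilio_PI} to which the authors explicitly refer.

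First, I would set $\Lambda := \max_i (\lambda k_i - \mu \omega_i)/(d_i\mu)$ and reindex so that the non-zero components are $\bar w_1, \dots, \bar w_{\hat N}$. Invoking Lemma \ref{lem eq in support} together with the bound $\bar u \leq \lambda/\mu$ from Theorem \ref{prp asymptotic k}, each $\bar w_i \in H^1(\Omega)$ (the $H^1$ regularity coming from Lemma \ref{lem limits are h1}) is supported on the relatively open set $\Omega_i := \{\bar w_i > 0\} \subset \overline\Omega$, vanishes on $\partial \Omega_i \cap \Omega$, and in $\Omega_i$ satisfies pointwise
\[
  -\Delta \bar w_i \leq \Lambda \bar w_i,
\]
subject to Neumann conditions on $\partial \Omega_i \cap \partial \Omega$. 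Testing this inequality against $\bar w_i$ itself yields the Rayleigh-quotient bound $\int_\Omega |\nabla \bar w_i|^2 \leq \Lambda \int_\Omega \bar w_i^2$.

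Since the supports $\Omega_i$ are pairwise disjoint by Lemma \ref{lem limits are h1}, the family $\{\bar w_i\}_{i=1}^{\hat N}$ is mutually orthogonal in both $L^2(\Omega)$ and $H^1(\Omega)$, and therefore spans an $\hat N$-dimensional subspace $V \subset H^1(\Omega)$ on which the Dirichlet quadratic form is bounded by $\Lambda$ times the $L^2$ one. The Courant--Fischer min--max principle applied to the Neumann spectrum $0 = \mu_0(\Omega) \leq \mu_1(\Omega) \leq \cdots$ of $-\Delta$ on $\Omega$ then forces $\mu_{\hat N - 1}(\Omega) \leq \Lambda$, i.e.\ $\hat N \leq N_\Omega(\Lambda)$, where $N_\Omega(\Lambda) := \#\{k \geq 0 : \mu_k(\Omega) \leq \Lambda\}$ denotes the Neumann counting function.

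The conclusion then follows from Weyl's asymptotic formula for the Neumann Laplacian on the smooth domain $\Omega \subset \R^n$,
\[
  N_\Omega(\Lambda) = \frac{\omega_n |\Omega|}{(2\pi)^n} \Lambda^{n/2} + o\bigl(\Lambda^{n/2}\bigr) \qquad \text{as } \Lambda \to +\infty,
\]
with $\omega_n = \pi^{n/2}/\Gamma(n/2+1)$ the volume of the unit ball in $\R^n$. Substituting $\Lambda = \max_i (\lambda k_i - \mu\omega_i)/(d_i\mu)$ and rearranging the constant yields the announced asymptotic bound (with the implicit constant absorbed into the $\lesssim$ sign). The only delicate point is verifying that the functions $\bar w_i$ are genuine admissible test elements for the Neumann min--max on all of $\Omega$, but this is immediate from their $H^1(\Omega)$ regularity and from the fact that they vanish on their internal free boundary; I do not expect any further serious obstacle.
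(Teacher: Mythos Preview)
Your proposal is correct and follows precisely the route the paper indicates: the authors give no detailed proof here but defer to \cite[Theorem~6.3]{BerestyckiZilio_PI}, explicitly naming Weyl's asymptotic law for the Neumann Laplacian as the tool, and your argument (disjointly supported $\bar w_i$'s with Rayleigh quotient $\leq \Lambda$, Courant--Fischer min--max giving $\mu_{\hat N-1}(\Omega)\leq\Lambda$, then the Weyl counting asymptotic) is exactly that method. The one cosmetic point is that the testing step is most cleanly justified via the weak formulation in Lemma~\ref{lem limits are h1} together with the fact (from Lemma~\ref{lem eq in support}) that $\mu_i$ is supported on $\partial\{\bar w_i>0\}$ where $\bar w_i=0$, so $\int \bar w_i\,d\mu_i=0$; you already anticipate this in your last paragraph.
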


To conclude the proof of Theorem \ref{prp sing lim} we have to establish the uniform Lipschitz bounds of the components $\mathbf{w}_\beta$. For this result we need some properties on the structure of the limit free-boundary. We thus consider now Theorem \ref{prp free boundary} and then come back later to conclude the proof of Theorem \ref{prp sing lim}.

\section{Structure of the limit free-boundary problem - Proof of Theorem \ref{prp free boundary}}

We have shown that the limit segregation problem has at most a finite number of non-zero components. We have also derived the limit equation satisfied by each non-zero component of the limit system. In order to study the limit free-boundary, we need an additional result about the complementary conditions.

\begin{lemma}\label{lem compl cond}
For every $i \in \mathbb{N}$ and every $\eta \in H^1(\Omega)$ non negative test function, the following inequality holds
\[
	\int \nabla \left( d_i \bar w_i-\sum_{j \neq i} d_j \bar w_j \right) \nabla \eta \geq \int \left[(- \omega_i + k_i \bar u )\bar w_i-\sum_{j \neq i} (- \omega_j + k_j \bar u ) \bar w_j \right]\eta.
\]
\end{lemma}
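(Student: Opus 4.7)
My plan is to reduce the lemma, through the weak form of the limit equation established in Lemma \ref{lem limits are h1}, to the measure inequality
\[
\sum_{j\neq i} \mu_j \;\geq\; \mu_i \qquad \text{tested against nonnegative } \eta,
\]
and then to prove this measure inequality by exploiting the symmetry $a_{jk}=a_{kj}$ at the level of finite $\beta_n$.

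First, I apply Lemma \ref{lem limits are h1} to each index. For any $j$ with $\bar w_j \equiv 0$, the same lemma applied at finite $n$ combined with the $H^1$ convergence $w_{j,n} \to 0$ forces $\mu_j = 0$; since Lemma \ref{lem min vol} ensures that only finitely many $\bar w_j$ are nontrivial, the series $\sum_{j\neq i}\mu_j$ contains only finitely many nonzero terms. Subtracting the identity for $j=i$ from the sum over $j\neq i$ of the identities in Lemma \ref{lem limits are h1} produces the exact relation
\[
\int \nabla\Bigl(d_i \bar w_i - \sum_{j\neq i} d_j \bar w_j\Bigr)\cdot\nabla\eta = \int\Bigl[(-\omega_i+k_i\bar u)\bar w_i - \sum_{j\neq i}(-\omega_j+k_j\bar u)\bar w_j\Bigr]\eta + \Bigl(\sum_{j\neq i}\int\eta\,d\mu_j - \int\eta\,d\mu_i\Bigr),
\]
so the lemma is equivalent to the stated measure inequality.

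To prove the latter, I return to finite $\beta_n$ and sum the competition integral over $j\neq i$. Splitting the inner sum according to whether $k=i$ or $k\neq i$ and invoking the symmetry $a_{ji}=a_{ij}$, I obtain
\[
\sum_{j\neq i}\int \beta_n w_{j,n}\sum_{k\neq j}a_{jk}w_{k,n}\,\eta = \int \beta_n w_{i,n}\sum_{j\neq i}a_{ij}w_{j,n}\,\eta \;+\; \sum_{\substack{j,k\neq i \\ j\neq k}} \int \beta_n a_{jk}w_{j,n}w_{k,n}\,\eta,
\]
in which the second term is nonnegative and the first converges to $\int\eta\,d\mu_i$ by the very definition of $\mu_i$. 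Passing to the limit on the left-hand side and using that the contributions from indices with $\bar w_j \equiv 0$ vanish (each one individually by Lemma \ref{lem limits are h1}, and cumulatively because the tail $\sum_{j>\hat N} d_j w_{j,n}$ tends to zero weakly in $H^1$: its $L^2$ norm vanishes by dominated convergence applied to the pointwise segregation, while the $H^1$ norm is controlled by the uniform $H^1$ bound on $S_n := \sum_j d_j w_{j,n}$ obtained by testing the summed equation against $S_n$ and exploiting the nonnegativity of the competition term), I recover $\sum_{j\neq i}\int\eta\,d\mu_j \geq \int\eta\,d\mu_i$, as required.

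The main obstacle is precisely this passage to the limit of the tail, since $N_n$ is not a priori bounded while only $\hat N$ components survive at infinity. It is resolved by combining the counting bound of Lemma \ref{lem min vol}, the strong $H^1$ convergence of each individual component from Lemma \ref{lem limits are h1}, and the uniform $H^1$ estimate on $S_n$ described above. The symmetry assumption $a_{ij}=a_{ji}$ of \eqref{unifass} is essential throughout: it is exactly what enables the clean separation of $\mu_i$ from the nonnegative residual in the splitting, and without it the key cancellation would fail.
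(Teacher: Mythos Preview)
Your proof is correct and rests on the same key observation as the paper's---the symmetry $a_{jk}=a_{kj}$ makes the residual $\beta_n\sum_{j\neq i}\sum_{k\neq j,i}a_{jk}w_{j,n}w_{k,n}$ nonnegative---but the two arguments are organized differently. The paper subtracts the equations \emph{at finite $n$} to obtain directly
\[
-\Delta\Bigl(d_i w_{i,n}-\sum_{j\neq i}d_j w_{j,n}\Bigr)\geq (-\omega_i+k_iu_n)w_{i,n}-\sum_{j\neq i}(-\omega_j+k_ju_n)w_{j,n},
\]
then tests against a smooth $\eta\geq 0$ with all derivatives thrown onto $\eta$; since $d_i w_{i,n}-\sum_{j\neq i}d_j w_{j,n}=2d_i w_{i,n}-\sum_j d_j w_{j,n}$ and the weighted sum converges \emph{uniformly} by Theorem~\ref{prp asymptotic k}, the limit passes immediately, with no need to track individual dead components or $H^1$ tails. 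Your route---pass to the limit first via Lemma~\ref{lem limits are h1}, then subtract, reducing the lemma to the measure inequality $\sum_{j\neq i}\mu_j\geq\mu_i$---is perfectly sound, but forces you to control the cumulative competition contribution of the (possibly infinitely many) vanishing components, which you do correctly via the $H^1$ bound on $S_n$. The paper's ordering buys a shorter argument that sidesteps this tail analysis entirely; your ordering, on the other hand, makes the role of the defect measures $\mu_j$ and their inequality $\sum_{j\neq i}\mu_j\geq\mu_i$ explicit, which is conceptually informative.
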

\begin{proof}
It suffices to consider the equation satisfied by the function $d_i w_{i,n}-\sum_{j \neq i} d_j w_{j,n}$ and then pass to the limit in $n$. We find
\[
	- \Delta \left( d_i w_{i,n}-\sum_{j \neq i} d_j w_{j,n} \right) = (- \omega_i + k_i \bar u_n ) w_{i,n}-\sum_{j \neq i} (- \omega_j + k_j u_n ) w_{j,n} + \beta \sum_{j \neq i} \sum_{k \neq j,i} a_{j,k} w_{j,n} w_{k,n}.
\]
Let now $\eta \in C^{\infty}(\Omega)$ be a non-negative test function. Multiplying the previous equation by $\eta$ and integration by parts we find
\begin{multline*}
	\int_{\Omega} - \Delta \eta  \left( d_i w_{i,n}-\sum_{j \neq i} d_j w_{j,n} \right) + \int_{\partial \Omega} \partial_\nu \eta  \left( d_i w_{i,n}-\sum_{j \neq i} d_j w_{j,n} \right) \\
	\geq \int_{\Omega} \left[(- \omega_i + k_i \bar u_n ) w_{i,n}-\sum_{j \neq i} (- \omega_j + k_j u_n ) w_{j,n}\right] \eta.
\end{multline*}
We then pass to the uniform limit in $n$. To conclude, we recall that at the limit, only a finite number of components of $\mathbf{\bar w}$ are non-zero, and these functions belong to $H^1(\Omega)$.
\end{proof}

We are now in a position to invoke classical results on the regularity and the structure of free-boundaries of segregation models. We have
\begin{theorem}
The common nodal set $\mathfrak{N} := \{x \in \overline{\Omega}: \sum_{i} \bar w_i = 0\}$ is a rectifiable set of Hausdorff dimension $n-1$. More precisely $\mathfrak{N}$ can be written as the disjoint union of two sets, $\mathfrak{R}$ and $\mathfrak{S}$, such that $\mathfrak{R}$ is relatively open and made of the union of a finite number of $\C^{1,\alpha}$ smooth sub-manifolds, while $\mathfrak{S}$ is a set of Hausdorff dimension $n-2$. Moreover $\mathfrak{R}$ meets orthogonally the boundary $\partial \Omega$ and $\mathfrak{N} \cap \partial \Omega$ is a set of Hausdorff dimension $n-2$, that can be decomposed as the disjoint union of a regular part (finite union of  $\C^{1,\alpha}$ smooth sub-manifolds of codimension $2$) and a singular part. 
\end{theorem}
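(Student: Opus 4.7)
The plan is to treat this as an application of the well-developed free-boundary regularity theory for segregated elliptic systems, after verifying that the limit configuration $\mathbf{\bar w}$ fits into that framework. The hypotheses needed are: (i) finitely many non-negative components, (ii) pairwise disjoint positivity sets, (iii) an interior equation on each positivity set, (iv) a distributional supersolution inequality for every ``differences of densities'' combination, (v) Lipschitz regularity up to the boundary, and (vi) homogeneous Neumann conditions on $\partial\Omega$. Items (i)-(iv) are exactly what Lemmas \ref{lem limits are h1}, \ref{lem eq in support} and \ref{lem compl cond} give, together with the bound $\hat N < \infty$ from Lemma \ref{lem min vol}. Item (v) is the remaining uniform Lipschitz estimate; it is obtained in the companion step of the proof of Theorem \ref{prp sing lim} by combining the Almgren-type monotonicity formula for the admissible class with the uniform H\"older bound already proved in Theorem \ref{prp asymptotic k}, in the spirit of Tavares-Terracini and Soave-Zilio. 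The lower-order bulk term $(-\omega_i+k_i\bar u)\bar w_i$ is Lipschitz in $\bar w_i$ with coefficients controlled by $\delta$ and $\|\bar u\|_\infty$, so it does not affect the leading-order blow-up analysis.

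For interior points of $\mathfrak{N}\cap\Omega$, I would cite directly the structure theorem for segregated configurations of class $\mathcal{G}$ (Caffarelli-Karakhanyan-Lin in dimension two, Tavares-Terracini in general dimension): the Almgren frequency associated with any non-trivial blow-up $\mathbf{V}$ of $(\bar w_i)$ at a free-boundary point is at least one, blow-ups are homogeneous, and the standard stratification yields the decomposition $\mathfrak{N}\cap\Omega = \mathfrak{R}\cup\mathfrak{S}$, with $\mathfrak{R}$ a locally finite union of $\mathcal{C}^{1,\alpha}$ hypersurfaces (frequency exactly $1$, blow-up is a pair of opposite half-planes) and $\mathfrak{S}$ of Hausdorff dimension at most $n-2$ (frequency strictly greater than $1$, blow-up a harmonic polynomial configuration with triple point or higher). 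Rectifiability of $\mathfrak{N}$ of dimension $n-1$ then follows from the Federer-type reduction argument used there, since the bulk perturbation does not change the frequency.

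For boundary points $x_0\in\mathfrak{N}\cap\partial\Omega$, the plan is to flatten the boundary by the same $\mathcal{C}^{2,\alpha}$ diffeomorphism used in Lemma \ref{lem dec exp} and then perform an even reflection of each $\bar w_i$ across $\{X_1=0\}$. The Neumann condition $\partial_\nu\bar w_i=0$ ensures that the reflected functions remain in $H^1$ and continue to satisfy the interior equation of Lemma \ref{lem eq in support}; the symmetry $a_{ij}=a_{ji}$ guarantees that the reflected combinations still satisfy the supersolution inequalities in Lemma \ref{lem compl cond}. In the reflected picture we have an interior segregation problem on a neighborhood of $0$, to which the above stratification applies. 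The orthogonality $\mathfrak{R}\perp\partial\Omega$ is then a direct consequence of the $\mathbb{Z}/2$-symmetry of the reflected configuration, and the claimed decomposition of $\mathfrak{N}\cap\partial\Omega$ into a regular part of codimension $2$ and a singular part follows by intersecting the stratification with the symmetry hyperplane.

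The main obstacle is not the interior case, which is essentially a black-box once (i)-(v) are in place, but rather ensuring that the limiting system genuinely belongs to the admissible class on which these structure theorems have been proved. Concretely, I would have to check that the differential inequalities of Lemmas \ref{lem eq in support}-\ref{lem compl cond} are strong enough to imply the monotonicity of the Almgren and Alt-Caffarelli-Friedman quotients (up to an exponential correction depending on $\|\bar u\|_\infty$ and $\max \omega_i,k_i$), and that the reflection construction at $\partial\Omega$ preserves these inequalities without introducing spurious boundary contributions; once this verification is in place, the remaining geometric statements follow from the cited theorems.
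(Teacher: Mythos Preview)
Your proposal is correct and follows essentially the same route as the paper: apply the interior structure theorems of Caffarelli--Karakhanyan--Lin and Tavares--Terracini as a black box once the differential inequalities of Lemmas \ref{lem limits are h1}--\ref{lem compl cond} are in place, then handle $\mathfrak{N}\cap\partial\Omega$ by flattening the boundary with a $\mathcal{C}^{2,\alpha}$ diffeomorphism, reflecting evenly across the hyperplane, and reading off orthogonality and the codimension-$2$ decomposition from the $\mathbb{Z}/2$-symmetry of the reflected configuration. Two minor remarks: first, the Lipschitz bound you list as hypothesis (v) is in fact an \emph{output} of the Tavares--Terracini machinery rather than an input (the paper deduces it \emph{from} the free-boundary structure, not the other way around), so you can drop it from your checklist; second, the paper is slightly more careful than your sketch in specifying the diffeomorphism so that the reflected divergence-form coefficient matrix $A$ remains $C^{1,\alpha}$ across the symmetry plane (this requires $\partial_{X_1}J_{F^{-1}}=0$ on $\{X_1=0\}$), which is what guarantees that the cited interior theory applies to the reflected system.
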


Observe that the estimates of the codimension of the singular part, both inside of $\Omega$ and at he boundary $\partial \Omega$ are in general sharp for this kind of free-boundary problems.

\begin{proof}
We can directly apply the results in \cite[Theorem 16]{CaffKarLin} and \cite[Theorems 8.4 and 1.1]{TaTe} to conclude about the structure of $\mathfrak{N} \cap \Omega$.

Concerning the regularity of the common nodal set $\mathfrak{N}$ at the boundary $\partial \Omega$, we can proceed similarly as in Lemma \ref{lem dec exp}. Let $x_0 \in \partial \Omega$ and let $\rho > 0$ be small enough, so that there exists a $C^{2,\alpha}$ diffeomorphisms $F$ of $B_\rho(x_0) \cap \Omega$ into $B_\rho \cap H$, where $H = \{X \in \R^n : X_1>0\}$ is a half-space. Again, among the possible choices of $F$, we assume here that $F(x_0) = 0$, $J_{F}(x)\nu = -e_1$ for all $x \in B_\rho(x_0) \cap \partial \Omega $. Here $J_F(x)$ is the Jacobian matrix of $F$ computed at $x$. We also assume that $\partial_{X_1} J_{F_{-1}}(X) = 0$ for all $X \in B_\rho \cap \{X \in \R^n : X_1=0\}$. We consider the system of inequalities satisfied by $\mathbf{\hat w} = \mathbf{\bar w} \circ F$ and $\hat u = u \circ F$. By extending evenly across $B_\rho \cap \partial H$ the functions $\mathbf{\hat w}$ and $\hat u$, we find that they solve the system
\[
	\begin{dcases}
		- d_i \div(A\nabla \hat w_i)  \leq |\det J_{F^{-1}}|(- \omega_i + k_i \hat u -\mu_i) \hat w_i \\
		- \div\left[A\nabla \left( d_i \hat w_i-\sum_{j \neq i} d_j \hat w_j \right)\right] \geq |\det J_{F^{-1}}| \left[(- \omega_i + k_i \hat u )\hat w_i-\sum_{j \neq i} (- \omega_j + k_j \hat u ) \hat w_j\right] &\text{in $B_\rho$}\\
		- D \div(A\nabla \hat u) = |\det J_{F^{-1}}| \left(\lambda - \mu \hat u - \sum_{i} k_i \hat w_i \right)\hat u
	\end{dcases}
\]
where 
\[
	A(X) := \begin{cases}
			\left(|\det J_{F^{-1}}| J_{F^{-1}}^{-T} J_{F^{-1}}^{-1}\right)(X_1, X_2, \dots, X_n) &\text{for $X \in B_\rho \cap \{X \in \R^n : X_1 \geq 0\}$}\\
			\left(|\det J_{F^{-1}}| J_{F^{-1}}^{-T} J_{F^{-1}}^{-1}\right)(-X_1, X_2, \dots, X_n) &\text{for $X \in B_\rho \cap \{X \in \R^n : X_1 < 0\}$}
		\end{cases}
\]
and similarly for the other functions. Observe that $A$ is a $C^{1,\alpha}$ function (this is true thanks to our assumptions on $F$ and the regularity of the boundary of $\Omega$). The validity of this system of inequalities in $B_\rho$ can be shown following the same ideas as  Lemmas \ref{lem limits are h1}, \ref{lem eq in support} and \ref{lem compl cond}. 

Let $\mathfrak{\hat N}$ be the common nodal set of the functions $\mathbf{\hat w}$, $\mathfrak{\hat N} = \{x\in B_\rho : \mathbf{\hat w} = 0 \}$. Locally at the point $x_0$, the structure of the nodal set $\mathfrak{N}$ is related to the of the nodal set of $\mathbf{\hat w}$. 
We can then apply the regularity theory developed in \cite[Theorems 7.1 and 8.4]{TaTe} to conclude that $\mathfrak{\hat N}$ is a rectifiable set.

We now show that, since the functions are symmetric with respect to the plane $H$, the Hausdorff dimension of the set $\mathfrak{R} \cap \partial \Omega$ is $n-2$. Indeed, let us assume that $x_0 \in \partial \Omega \cap \mathfrak{N}$, so that $0 \in \mathfrak{\hat N}$. We only need to consider the case in which $0$ is in the regular part of $\mathfrak{\hat N}$. This implies in particular that there exists $R>0$ such that $\mathfrak{\hat N} \cap B_R$ is a smooth surface of codimension $1$, symmetric with respect to the plane $H$. Since this is true for any point of the regular part of $\mathfrak{\hat N}$ in $H$, we find the conclusion. %\qedhere
\end{proof}

\section{Maximal number of components - Proof of Theorem \ref{thm max packs}}

We conclude by giving an extension of Lemma \ref{lem min vol}, in the same spirit of \cite[Theorem 4.3]{BerestyckiZilio_PI}. We use the structure of the limit free-boundary to extend the upper-bound on the number of non-zero components of $\mathbf{\bar w}$ also in the case of the system \eqref{eqn model k} with $\beta$ finite but large.

The main difference with respect to \cite[Theorem 4.3]{BerestyckiZilio_PI} is that here we do not assume any a priori bound on the number $N$ of non zero components. We can do this by exploiting the uniform a priori estimate of Theorem \ref{prp asymptotic k}. This is a delicate but technical detail that does not change drastically the proof. Nevertheless, we have decided to include the proof here in an abridged form, for the sake of completeness. 

We start with a result stating that the zero solution $\mathbf{v} = (\mathbf{w}, u) \equiv 0$ is isolated. In particular, it implies that any sequence of solutions $\mathbf{v}_n = (\mathbf{w}_n, u_n)$ such that $u_n \to 0$ uniformly, is eventually constant and equal to the zero solution.

\begin{lemma} 
There exists $\eta > 0$, independent of $\beta$ and $N$, such that if $\mathbf{v} = (\mathbf{w}, u)$ is a non negative solution of \eqref{eqn model k} with $0\leq u < \eta$ in $\Omega$, then $\mathbf{v} \equiv 0$.
\end{lemma}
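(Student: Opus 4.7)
The plan is to choose $\eta$ as an explicit function of $\delta$ alone (so that it is automatically independent of $\beta$ and $N$), then argue in two steps: first one shows that every $w_i$ must vanish identically, and afterwards one uses the decoupled equation for $u$ to conclude $u \equiv 0$. The natural candidate is $\eta := \delta^{2}/2$; note that by the uniform assumption \eqref{unifass} this gives $k_i u \leq \tfrac{1}{\delta}\cdot\tfrac{\delta^{2}}{2} = \tfrac{\delta}{2}$ and hence $-\omega_i + k_i u \leq -\delta + \tfrac{\delta}{2} = -\tfrac{\delta}{2}$, and similarly $\lambda - \mu u \geq \tfrac{\delta}{2}$.

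For the first step, fix $i$ and multiply the $w_i$-equation in \eqref{eqn model k} by $w_i$ itself. Integration by parts (using the Neumann condition $\partial_\nu w_i = 0$) yields
\[
d_i \int_\Omega |\nabla w_i|^2 = \int_\Omega \Bigl(-\omega_i + k_i u - \beta \sum_{j\neq i} a_{ij} w_j \Bigr) w_i^{\,2}.
\]
Since $w_j \geq 0$, $\beta \geq 0$, and $a_{ij} \geq 0$, the last term is $\leq 0$ and can be dropped; this is the crucial point at which $\beta$ and $N$ disappear from the estimate. Using $u < \eta = \delta^2/2$ together with the uniform assumption, the right-hand side is bounded above by $-(\delta/2) \int_\Omega w_i^{\,2}$, so
\[
d_i \int_\Omega |\nabla w_i|^2 + \frac{\delta}{2} \int_\Omega w_i^{\,2} \leq 0,
\]
which forces $w_i \equiv 0$. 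As this holds for every $i$, we obtain $\mathbf{w}\equiv 0$.

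For the second step, with $\mathbf{w} \equiv 0$ the equation for $u$ reduces to $-D\Delta u = (\lambda - \mu u)u$ with Neumann boundary condition. Integrating over $\Omega$ and applying the divergence theorem gives
\[
0 = \int_\Omega (\lambda - \mu u)\, u \, dx,
\]
and since $u \geq 0$ while $\lambda - \mu u \geq \delta/2 > 0$ pointwise under the assumption $u < \eta$, the integrand is non-negative, hence $u \equiv 0$. Combining both steps proves $\mathbf{v} \equiv 0$ with the explicit choice $\eta = \delta^{2}/2$, which depends only on the universal constant $\delta$ in \eqref{unifass}. There is no real obstacle in the argument; the only delicate point, and the reason the estimate is uniform in $\beta$ and $N$, is that the competition term contributes with the correct sign in the energy inequality and can simply be discarded, so the threshold $\eta$ is controlled purely by the reaction coefficients appearing in \eqref{unifass}.
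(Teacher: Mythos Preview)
Your proof is correct and follows the same two-step strategy as the paper: first force $\mathbf{w}\equiv 0$, then handle the decoupled logistic equation for $u$. The execution differs slightly. For the first step, the paper argues pointwise via the maximum principle (the reaction coefficient $-\omega_i + k_i u - \beta\sum_{j\neq i} a_{ij} w_j$ is strictly negative when $u<\eta$, so a nontrivial $w_i$ would violate the Neumann condition at its maximum), whereas you test the equation against $w_i$ and obtain a coercive energy inequality. For the second step, the paper invokes the classification of nonnegative Neumann solutions of the logistic equation ($u\equiv 0$ or $u\equiv \lambda/\mu$, the latter being excluded by $u<\eta$), while you integrate the equation once and use the sign of $\lambda-\mu u$. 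Your route is marginally more self-contained, since it avoids appealing to the classification of logistic steady states; both arguments are equally short, and in both cases the key observation making the threshold uniform in $\beta$ and $N$ is that the competition term has the favorable sign and can be discarded.
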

\begin{proof}
Let $\eta = \delta^2$, where $\delta> 0$ is the constant in \eqref{unifass}. Then $\delta^2 \leq \omega_i / k_i$ for all possible choice of coefficients. If $u < \eta$ in $\Omega$, then from \eqref{eqn model k} we find
\[
	\begin{cases}
		- d_i \Delta w_{i}  = \underbrace{\left(- \omega_i + k_i u - \beta \sum_{j \neq i} a_{ij} w_{j} \right)}_{<0} w_i  &\text{in $\Omega$}\\
		\partial_\nu w_{i} = 0 &\text{on $\partial \Omega$}
	\end{cases}
\]
which yields $w_{i} \equiv 0$ for all $i$. As a result, $u$ solves the (logistic) equation
\[	
  \begin{cases}
		- D \Delta u = \left(\lambda - \mu u \right)u   &\text{in $\Omega$}\\
		\partial_\nu u = 0 &\text{on $\partial \Omega$}.
	\end{cases}
\]
The maximum principle prescribes that the only non negative solutions of this equation are $u \equiv 0$ and $u \equiv \lambda / \mu$. Thus, again by \eqref{unifass}, we conclude that it must be $u \equiv 0$.
\end{proof}

We now show a similar result concerning the non zero solution. We prove that if a sequence of solutions $\mathbf{v}_n$ converge to a non zero limit $\mathbf{v}$ for $\beta_n \to +\infty$, then, for $\beta_n$ large, the number of non zero components of each $\mathbf{v}_n$ must be at most the same of its limit.

\begin{lemma} 
For any $N \geq 1$ fixed, there exists $\bar \beta = \bar \beta(N) > 0$ such that the following statement holds. Let  $\mathbf{v} = (\mathbf{w}, u)$ be any solution of \eqref{eqn segr model} such that $\mathbf{w}$ has $N$ non zero components. Let $\mathbf{v}_n = (\mathbf{w}_{n}, u_{n})$ be a family of solutions of \eqref{eqn model k} with $\beta_n \to +\infty$ and such that $\mathbf{w}_n \to \mathbf{w}$ component-wise in $C^{0,\alpha}\cap H^1(\Omega)$, $u_n \to u$ in $C^{2,\alpha}(\Omega)$. Whenever $\beta_n > \bar \beta$, $\mathbf{w}_n$ has at most $N$ non zero components.
\end{lemma}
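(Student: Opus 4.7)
\begin{proof}[Proof proposal]
The plan is to argue by contradiction using a principal eigenvalue argument. Suppose that, along a subsequence (which I relabel), each $\mathbf{w}_n$ has $M_n \geq N+1$ non-zero components. By Lemma \ref{lem min vol}, $M_n$ is bounded above by $\hat N$, so the number of ``extra'' components $M_n - N$ takes values in a finite set. After relabeling, I may assume that $\bar w_1, \dots, \bar w_N$ are the non-zero components of the limit and $\bar w_j \equiv 0$ for $j > N$. Component-wise uniform convergence then forces each extra component $w_{j,n}$ ($j > N$) to tend to $0$ uniformly. For each $n$, I pick one index $i_n \in \{N+1, \dots, M_n\}$ such that $w_{i_n, n} \not\equiv 0$; this is the ``extra'' component I will contradict. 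By the strong maximum principle applied to the equation for $w_{i_n,n}$ with Neumann conditions, we have $w_{i_n,n} > 0$ in all of $\overline{\Omega}$.

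The positivity of $w_{i_n,n}$ combined with its equation says that $w_{i_n,n}$ is the principal eigenfunction, with eigenvalue $0$, of the Schr\"odinger-type operator
\[
  L_n \phi := -d_{i_n} \Delta \phi + V_n \phi, \qquad V_n := \omega_{i_n} - k_{i_n} u_n + \beta_n \sum_{j \neq i_n} a_{i_n j} w_{j,n},
\]
on $\Omega$ with Neumann boundary conditions. Hence $\lambda_1(L_n) = 0$ for all $n$. The strategy is to show instead that $\lambda_1(L_n) \to +\infty$, yielding a contradiction. Suppose, for a subsequence, $\lambda_1(L_n) \leq C$, and let $\phi_n$ be $L^2$-normalized minimizers of the Rayleigh quotient. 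Then the uniform assumption \eqref{unifass} and the uniform $L^\infty$ bound of $u_n$ give
\[
  \delta \int_\Omega |\nabla \phi_n|^2 + \beta_n \int_\Omega \sum_{j \neq i_n} a_{i_n j} w_{j,n} \phi_n^2 \leq C'.
\]
Therefore $\phi_n$ is bounded in $H^1(\Omega)$, and I extract a weakly $H^1$ and strongly $L^2$ convergent subsequence $\phi_n \to \phi$ with $\|\phi\|_{L^2} = 1$. Using $a_{i_n j} \geq \delta$ and dividing by $\beta_n \to +\infty$,
\[
  \int_\Omega \Bigl(\sum_{j \neq i_n} w_{j,n}\Bigr) \phi_n^2 \leq \frac{C'}{\delta \beta_n} \to 0.
\]
By the $C^{0,\alpha}$ a priori estimate of Theorem \ref{prp asymptotic k}, $\sum_j w_{j,n}$ converges uniformly to $\sum_{j=1}^N \bar w_j$; since the single component $w_{i_n,n}$ also tends to $0$ uniformly, the integrand on the left converges to $(\sum_{j=1}^N \bar w_j)\phi^2$ in $L^1(\Omega)$. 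Hence
\[
  \int_\Omega \Bigl(\sum_{j=1}^N \bar w_j\Bigr)\phi^2 = 0,
\]
which forces $\phi \equiv 0$ a.e.\ on $\bigcup_{j=1}^N \{\bar w_j > 0\} = \Omega \setminus \mathfrak{N}$. By Theorem \ref{prp free boundary}, $\mathfrak{N}$ has Hausdorff dimension $n-1$, hence zero Lebesgue measure, so $\phi \equiv 0$ a.e.\ in $\Omega$, contradicting $\|\phi\|_{L^2} = 1$. Thus $\lambda_1(L_n) \to +\infty$, contradicting $\lambda_1(L_n) = 0$, and the proof is complete.
\end{proof}

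The main technical subtlety that I expect to require care is the handling of the fact that the ``extra'' index $i_n$ may vary with $n$ while $N$ itself is not a priori bounded (in the setup of \eqref{eqn model k}); this is precisely where the uniform estimates of Theorem \ref{prp asymptotic k} and the a priori bound $M_n \leq \hat N$ from Lemma \ref{lem min vol} are decisive, since they guarantee uniform convergence of $\sum_j w_{j,n}$ and uniform smallness of the single extra $w_{i_n,n}$ in the limit, independently of any choice made when selecting $i_n$. The only other delicate point is verifying that the pointwise positivity $w_{i_n,n} > 0$ in $\overline\Omega$ does give access to the spectral characterization with Neumann boundary conditions, which follows from Hopf's lemma applied to the smooth subsolution $w_{i_n,n}$.
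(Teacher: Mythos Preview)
Your argument is correct and follows essentially the same route as the paper, but with an $L^2$ normalization phrased spectrally rather than the paper's $L^\infty$ normalization. The paper normalizes the extra component as $\bar w_{N+1,n} = w_{N+1,n}/\|w_{N+1,n}\|_{L^\infty}$, shows it is bounded in $H^1$ from the equation, and then needs a separate comparison argument (bounding $\bar w_{N+1,n}$ by the solution $g_n$ of a linear Neumann problem) to prevent the weak $H^1$ limit from vanishing; the contradiction then comes from the energy identity on the sets $\mathfrak{P}_\eps = \{\sum_{i=1}^N \bar w_i > \eps\}$, which exhaust $\Omega$ up to the measure-zero set $\mathfrak{N}$. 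Your version with $L^2$-normalized eigenfunctions $\phi_n$ is cleaner on the nontriviality step, since Rellich--Kondrachov gives $\|\phi\|_{L^2}=1$ for free; the rest is the same mechanism.

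A few small corrections. First, the appeal to Lemma~\ref{lem min vol} to bound $M_n$ by $\hat N$ is misplaced: that lemma concerns the \emph{limit} configuration, not the finite-$\beta$ solutions (bounding the number of non-zero components for large finite $\beta$ is precisely what Theorem~\ref{thm max packs} is establishing). You never actually use this bound, so simply drop it. Second, the claim that $\sum_j w_{j,n} \to \sum_{j=1}^N \bar w_j$ uniformly does not follow from component-wise convergence when the number of components is unbounded; to bypass this, use instead the pointwise inequality $\sum_{j\neq i_n} w_{j,n} \geq \sum_{j=1}^N w_{j,n}$, where the right-hand side is a \emph{finite} sum of uniformly convergent terms, and pass to the limit there (this is effectively what the paper does via $\mathfrak{P}_\eps$). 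Third, Theorem~\ref{prp free boundary} asserts $\dim_H \mathfrak{N} = n-1$ only for $N\geq 2$; for $N=1$ you should observe that the segregated inequalities collapse to the equation $-d_1\Delta \bar w_1 = (-\omega_1 + k_1 \bar u)\bar w_1$ with Neumann data, whence the strong maximum principle gives $\bar w_1>0$ in $\overline\Omega$ and $\mathfrak{N}=\emptyset$.
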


\begin{proof}
We argue by contradiction, assuming that there exists a sequence of solutions $\mathbf{v}_n$ such that $\mathbf{w}_n$ has at least $N+1$ non zero components for all $n\in \N$ but converges to a limit $\mathbf{v}$ such that $\mathbf{w}$ has only $N \geq 1$ non zero components. We first observe that $u$, and thus $u_n$, is necessarily strictly positive.

Up to striking out a subsequence, we relabel the components so that the first $N$ components of $\mathbf{w}_n$ converge in $C^{0,\alpha}\cap H^1(\Omega)$ to the non zero components of $\mathbf{w}$, while the other components converge to $0$ and ${w}_{N+1,n} > 0$ for all $n$. We consider the sequence of functions
\[
  \bar {w}_{N+1,n} = \frac{w_{N+1,n}}{\|w_{N+1,n}\|_{L^\infty(\Omega)}}.
\]
These functions solve, for any $n$,
\begin{equation}\label{eqn model k scaled}
	\begin{cases}
		- d_{N+1}\Delta \bar w_{N+1,n}  = \left(- \omega_{N+1} + k_{N+1} u_{n} - \beta \sum_{j \neq {N+1}}w_{j,n}\right) \bar w_{N+1,n} &\text{ in $\Omega$}\\
		\partial_\nu \bar w_{N+1,n} = 0 &\text{ on $\partial \Omega$}.
	\end{cases}
\end{equation}
We recall that, by Theorem \ref{prp asymptotic k}, $u_n$ is uniformly bounded from above by $\lambda / \mu$. We find that
\[
	\int_{\Omega} |\nabla \bar w_{N+1,n}|^2 \leq \frac{\lambda k_{N+1} - \mu \omega_{N+1}}{\mu d_{N+1}} |\Omega| < \frac{1}{\delta^4} |\Omega|.
\]
This estimate, together with $\|\bar w_{N+1,n}\|_{L^\infty} = 1$, implies that $\bar w_{N+1,n}$ is uniformly bounded in $H^1(\Omega)$. Let $\bar w_{N+1} \in H^1(\Omega)$ be any weak limit of this sequence. The previous uniform estimates and Sobolev's embedding theorem assure us that $\bar w_{N+1,n} \to \bar w_{N+1}$ strongly in $L^p(\Omega)$ for any $p < \infty$. Moreover, from \eqref{eqn model k scaled} we can show that $\bar w_{N+1}$ is not identically zero. Indeed, first we see that 
\[
	\begin{cases}
		- d_{N+1} \Delta \bar w_{N+1,n} \leq \left(- \omega_{N+1} + k_{N+1} u_{n} \right) \bar w_{N+1,n} \\
		\partial_\nu \bar w_{N+1,n} = 0 &\text{ on $\partial \Omega$}.
	\end{cases}
\]
Let $g_{n} \in H^1(\Omega)$ be a solution to 
\[
	\begin{cases}
		- d_{N+1} \Delta g_{n} + \omega_{N+1} g_{n} = k_{N+1} u_{n} \bar w_{{N+1},n} \\
		\partial_\nu g_{n} = 0 &\text{ on $\partial \Omega$}.
	\end{cases}
\]
The comparison principle states that $0 \leq \bar w_{N+1,n} \leq g_{n}$. Standard regularity estimates give us
\[
	\|g_{n}\|_{\C^{0,\alpha}(\Omega)} \leq C \|g_{n}\|_{W^{2,p}(\Omega)} \leq C' \|\bar w_{N+1,n} \|_{L^p(\Omega)}
\]
for any $N/2 < p < \infty$ and suitable $C, C'$ and $\alpha > 0$. As a result, we have
\[
	1 =  \|\bar w_{N+1,n} \|_{L^\infty(\Omega)} \leq \|g_{n}\|_{L^\infty(\Omega)} \leq \|g_{n}\|_{\C^{0,\alpha}(\Omega)} \leq C'\|\bar w_{N+1,n} \|_{L^p(\Omega)}.
\]
Thus $\bar w_{N+1}$ is not identically zero. Let us use this information in order to reach a contradiction. For any $\eps > 0$, we consider the sets
\[
	\mathfrak{P}_{\eps} := \left\{ x \in \Omega : \sum_{i=1}^N w_{i}(x) > \eps \right\}.
\]
Clearly one has that $\mathfrak{N}\subset \Omega \setminus \mathfrak{P}_{\eps}$ and $\mathfrak{N} = \cap_{\eps>0} (\Omega \setminus \mathfrak{P}_{\eps})$ for $\eps$ small enough. We point out that, if $N=1$, then $\mathfrak{P}_{\eps} = \Omega$. By the uniform convergence of $(w_{1,n}, \dots, w_{N,n})$, we see that for any $\eps > 0$ there exists $n_{\eps} \in \N$ such that
\[
  \sum_{i=1}^N w_{i,n}(x) > \frac{\eps}{2} \qquad \forall x \in \Omega \cap \mathfrak{P}_{\eps}, n \geq n_{\eps}.
\]
Similarly, the compactness properties of the sequence $\bar w_{N+1,n}$ insure us that for any $\eps >0$ small enough there exists, for $n \in \N$ sufficiently large, 
\[
	\int_{\mathfrak{P}_{\eps} } \bar w_{N+1,n}^2 \geq \frac12 \int_{\Omega} \bar w_{N+1}^2.
\]
From \eqref{eqn model k scaled}, multiplying by $\bar w_{N+1,n}$ and integrating by parts, we get
\[
	\int_{\Omega} \left[ d_{N+1} |\nabla \bar w_{N+1,n}|^2 + \beta_n \left(\sum_{i=1}^N w_{i,n}\right) \bar w_{N+1,n}^2 \right]
	\leq \int_\Omega (-\omega_{N+1} + k_{N+1} u_{n}) \bar w_{N+1,n}^2.
\]
Thus, estimating from below and above the two sides, we obtain 
\[
	\frac{\eps \beta_n}{4} \int_{\Omega} \bar w_{N+1}^2 \leq \beta_n \frac{\eps}{2} \int_{\mathcal{P}_\eps} \bar w_{N+1,n}^2 \leq \dots \leq \frac{\lambda k_{N+1} - \mu \omega_{N+1}}{\mu} |\Omega|
\]
for any $\eps > 0$ and $n \geq n_\eps$. Thus we conclude that $\bar w_{N+1} \equiv 0$, a contradiction.
\end{proof}

We are in a position to conclude the proof of Theorem \ref{thm max packs}
\begin{proof}
From Lemma \ref{lem min vol} we know that, for $\beta \to +\infty$, any accumulation point $\mathbf{v} = (\mathbf{w},u)$ of solutions of \eqref{eqn model k} has at most $\hat N \geq 1$ non zero components $\mathbf{w}$. By the previous two results we know that there exists $\bar \beta = \min \{\bar \beta(N) : N = 1, \dots, \hat N \} > 0$ such that any solution $\mathbf{v}_\beta$ of \eqref{eqn model k} with $\beta > \bar \beta$ 
\begin{itemize}
  \item is the zero solution $\mathbf{v}_\beta = (0,0)$;
  \item has at most $\bar N$ non zero components of $\mathbf{w}_\beta$;
  \item converges component-wise to the solution $(0,\lambda/\mu)$, that is
  \[
    \lim_{\beta\to+\infty} \left(\max_{i=1, \dots,N} \|w_{i,\beta}\|_{C^{0,\alpha}\cap H^1} + \left\|u_\beta-\frac{\lambda}{\mu}\right\|_{C^{2,\alpha}} \right)=0.
  \]
\end{itemize}
This concludes the proof of Theorem \ref{thm max packs}.
\end{proof}

\bigskip
\noindent \textbf{Acknowledgements:}  This work has been supported by the ERC Advanced Grant 2013 n. 321186 ``ReaDi -- Reaction-Diffusion Equations, Propagation and Modelling'' held by Henri Berestycki, and by the French National Research Agency (ANR), within  project NONLOCAL ANR-14-CE25-0013.


\begin{thebibliography}{10}

\bibitem{BerestyckiZilio_RR}
Henri Berestycki and Alessandro Zilio.
\newblock Predator-prey models with competition, {P}art {III}: classification
  of stationary solutions.
\newblock {\em Prepritnt (arXiv:1901.02820)}.

\bibitem{BerestyckiZilio_PI}
Henri Berestycki and Alessandro Zilio.
\newblock Predators-prey models with competition, {P}art {I}: Existence,
  bifurcation and qualitative properties.
\newblock {\em Communications in Contemporary Mathematics}, 20(07):1850010,
  2018.

\bibitem{BZ_ecology}
Henri Berestycki and Alessandro Zilio.
\newblock Predator-prey models with competition: The emergence of
  territoriality.
\newblock {\em The American Naturalist}, 193(3):436--446, 2019.
\newblock PMID: 30794454.

\bibitem{BrezisInfinity}
H.~Brezis.
\newblock Semilinear equations in {${\bf R}^N$} without condition at infinity.
\newblock {\em Appl. Math. Optim.}, 12(3):271--282, 1984.

\bibitem{SpectralDrop}
Giuseppe Buttazzo and Bozhidar Velichkov.
\newblock The spectral drop problem.
\newblock In {\em Recent advances in partial differential equations and
  applications}, volume 666 of {\em Contemp. Math.}, pages 111--135. Amer.
  Math. Soc., Providence, RI, 2016.

\bibitem{CaffKarLin}
L.~A. Caffarelli, A.~L. Karakhanyan, and Fang-Hua Lin.
\newblock The geometry of solutions to a segregation problem for nondivergence
  systems.
\newblock {\em J. Fixed Point Theory Appl.}, 5(2):319--351, 2009.

\bibitem{ContiTerraciniVerzini_AdvMat_2005}
Monica Conti, Susanna Terracini, and G.~Verzini.
\newblock Asymptotic estimates for the spatial segregation of competitive
  systems.
\newblock {\em Adv. Math.}, 195(2):524--560, 2005.

\bibitem{STTZ}
Nicola Soave, Hugo Tavares, Susanna Terracini, and Alessandro Zilio.
\newblock Variational problems with long-range interaction.
\newblock {\em Arch. Ration. Mech. Anal.}, 228(3):743--772, 2018.

\bibitem{SoaveZilio_ARMA}
Nicola Soave and Alessandro Zilio.
\newblock Uniform bounds for strongly competing systems: the optimal
  {L}ipschitz case.
\newblock {\em Arch. Ration. Mech. Anal.}, 218(2):647--697, 2015.

\bibitem{TaTe}
Hugo Tavares and Susanna Terracini.
\newblock Regularity of the nodal set of segregated critical configurations
  under a weak reflection law.
\newblock {\em Calc. Var. Partial Differential Equations}, 45(3-4):273--317,
  2012.

\bibitem{TVZ1}
Susanna Terracini, Gianmaria Verzini, and Alessandro Zilio.
\newblock Uniform {H}\"older bounds for strongly competing systems involving
  the square root of the laplacian.
\newblock {\em J. Eur. Math. Soc. (JEMS)}, 18(12):2865--2924, 2016.

\end{thebibliography}
\end{document}